\numberwithin{equation}{section}
\title[Critical Behavior of Non-Intersecting Brownian Motions]{
Critical Behavior of Non-Intersecting Brownian Motions}
\author[]{Tom Claeys$^*$}
\address{$^*$Institut de Recherche en Math\'ematique et Physique, Universit\'e catholique de Louvain, Chemin du Cyclotron
2, B-1348 Louvain-La-Neuve, Belgium}
\email{tom.claeys@uclouvain.be}
\author[]{Thorsten Neuschel$^\dag$}
\address{$^\dag$Department of Mathematics, Bielefeld University, Germany}
\email{thorsten.neuschel@math.uni-bielefeld.de}
\author[]{Martin Venker$^\ddag$}
\address{$^\ddag$Department of Mathematics, Bielefeld University,
Germany}
\email{mvenker@math.uni-bielefeld.de}
\date{\today}
\newcommand{\R}{\mathbb{R}}
\DeclareMathOperator{\Tr}{Tr}
\newcommand{\Ai}{{\rm Ai}}
\theoremstyle{plain}
\newtheorem{thm}{Theorem}[section]
\newtheorem{lemma}[thm]{Lemma}
\newtheorem{prop}[thm]{Proposition}
\theoremstyle{remark}
\newtheorem{rmk}[thm]{Remark}
\newcommand{\lb}{\left(}
\newcommand{\rb}{\right)}
\renewcommand{\O}{\mathcal O}
\newcommand{\lv}{\lvert}
\newcommand{\rv}{\rvert}
\renewcommand{\k}{\kappa}
\renewcommand{\t}{\tau}
\newcommand{\s}{\sigma}
\renewcommand{\b}{\beta}
\renewcommand{\epsilon}{\varepsilon}
\newcommand{\e}{\epsilon}
\newcommand{\w}{\omega}
\newcommand{\z}{\zeta}
\renewcommand{\hat}{\widehat}
\renewcommand{\P}{\mathbb P}
\renewcommand{\d}{\delta}
\newcommand{\x}{\mathbf{x}}
\newcommand{\A}{\mathcal A}
\begin{document}

\begin{abstract}
We study $n$ non-intersecting Brownian motions corresponding to initial configurations which have a vanishing density in the large $n$ limit at an interior point of the support.
It is understood that the point of vanishing can propagate up to a critical time, and we investigate the nature of the microscopic space-time correlations near the critical point and critical time.
We show that they are
described either by the Pearcey process or by the Airy line ensemble, depending on whether a simple integral related to the initial configuration vanishes or not. Since the Airy line ensemble typically arises near edge points of the macroscopic density, its appearance in the interior of the spectrum is surprising. We explain this phenomenon by showing that, even though there is no gap of macroscopic size near the critical point, there is with high probability a gap of mesoscopic size. Moreover, we identify a path which follows the Airy$_2$ process.
\end{abstract}


\keywords{Random Matrices, Dyson's Brownian motion, Airy line ensemble, Pearcey
	process, universality, mesoscopic gap}
\maketitle

\section{Introduction and Main Results}
The subject of non-intersecting Brownian motions has received a lot of attention over the last 15 years, due to the connection to random matrix theory and as continuum analog of discrete interacting particle systems.

They were studied presumably first by  Dyson in 1962 \cite{Dyson} as a physically motivated dynamic model of eigenvalues of random matrices. More precisely, he considered a Brownian motion on the space of $n\times n$ Hermitian matrices $(M(t))_{t\geq0}$, starting at time $t=0$ with an arbitrary Hermitian matrix $M(0)$, and 
he was interested in the process $(X(t))_{t\geq0}$ of the $n$ (real) eigenvalues $X_1(t)\leq X_2(t)\leq\dots\leq X_n(t)$ of $\lb\frac{1}{\sqrt n}M(t)\rb_{t\geq0}$, 
where we state the rescaling factors $n^{-1/2}$ here for later convenience. In the  language of stochastic analysis, Dyson's main finding was that the process $(X(t))_{t\geq 0}$ solves the system of stochastic differential equations (SDE)
\begin{align}
dX_j(t)=\frac{1}{\sqrt n}dB_j(t)+\frac{1}{n}\sum_{k\not=j}\frac{1}{X_j(t)-X_k(t)}dt,\label{SDE}\quad j=1,\ldots,n,
\end{align}
where $(B_1(t)_{t\geq0},\dots,(B_n(t))_{t\geq0}$ are $n$ independent Brownian motions.
 Later it was realized that this process can be obtained by conditioning $n$ independent Brownian motions (with diffusion coefficients $n^{-1/2}$) not to intersect for all times \cite{Grabiner}, thereby establishing the term \textit{non-intersecting/non-colliding Brownian motion{s}}. In the following, we will {use} the name non-intersecting Brownian motions (NIBM) instead of the also popular term \textit{Dyson's Brownian motions}, as the latter one often is used for a whole one-parameter family of {processes} obtained by replacing the factor $1/n$ in  \eqref{SDE} by $\frac{\b}{2n}$, where $\b>0$ is arbitrary. For the values $\b=1,4$, Dyson's Brownian motion{s} can be realized as eigenvalue process{es} of Brownian motion{s} in the spaces of real-symmetric and quaternionic-selfdual matrices, respectively. Only for the value $\b=2$, considered in this paper, the process admits an interpretation of $n$ independent processes conditioned not to intersect.

{A Hermitian Brownian motion is a natural dynamic version of the so-called deformed Gaussian unitary ensemble (deformed GUE), a well-known model of Hermitian random matrices, defined as the matrix distribution with density (in $M$) proportional to 
\begin{align}
e^{-\frac n2\Tr(M(0)-M)^2}, \label{deformed_GUE}
\end{align}
where $M(0)$ is a given deterministic Hermitian matrix. The eigenvalues of the deformed GUE and NIBM show in general a much richer behavior than the eigenvalues of the  classical GUE, which is obtained for $M(0)=0$. 
}

In the quest for discovering universal structures hidden in physical and mathematical models, here one is interested in the local correlations of the paths (or trajectories) as their number grows to infinity. The term ``local correlations'' refers to correlations on a microscopic scale, i.e.~on a scale on which eigenvalues can be observed individually. {For instance, in typically considered bulk situations} {a local rescaling of NIBM leads for a large number of paths to  the sine process.} This is a time-dependent determinantal point process given in terms of the extended sine kernel, and this limit can be observed universally with regards to all initial conditions leading to such bulk situations \cite{Johansson01,Shcherbinabulk, Erdosetal1,LeeSchnellietal,ClaeysNeuschelVenker}. {Note that sine-kernel statistics also appear in a large variety of interacting particle systems, a fact commonly referred to as universality. Instead of this (model) universality, we will in this paper focus on universality in the initial conditions.}

{Spectral edge situations have been studied extensively as well. Here, {whenever the limiting macroscopic density has a square root vanishing followed by a macroscopic gap}, the so-called Airy line ensemble typically arises \cite{Johansson03,Shcherbinaedge,CapitainePeche,LeeSchnelli} and describes the statistics of the largest or smallest eigenvalues in a bulk of eigenvalues. }

{The situation of two bulks of eigenvalues merging at a certain time is less well-understood. Whenever the limiting macroscopic density has a cusp, {one expects} to observe the 
Pearcey process \cite{TracyWidom2}. There are however cases in which a cusp singularity does not lead to Pearcey statistics  but e.g.~to cusp-Airy processes \cite{DJM} or $r$-Airy processes \cite{ADvM} which are related to outliers at the edge.  A mere touching (but not merging) of two bulks at one particular time can be observed in certain cases for non-intersecting Brownian bridges and is supposed to lead to the tacnode process (see \cite{Johansson13,AJvM} and references therein). The Pearcey, cusp-Airy and $r$-Airy processes have mainly been found for special initial conditions of NIBM (or Brownian bridge models) or special discrete interacting particle systems. Classification results on Airy and Pearcey universality  have been given in \cite{CapitainePeche,Erdosetal} for the eigenvalues of the deformed GUE \eqref{deformed_GUE}, which may be regarded as one-time distribution of NIBM. Because of the above mentioned variety of limiting processes, a classification of the initial conditions producing Pearcey statistics in the large $n$ limit is necessarily complicated.}\\ 

In view of these complications, we will in this paper study concrete situations in which a merging of bulks naturally appears. More precisely, we consider initial configurations whose limiting density $\psi$ vanishes at an isolated point $x^*$. As we will see, a sufficiently fast vanishing of the density at $x^*$ leads to the merging of two bulks at some critical time $t_{\rm cr}>0$ and some critical point $x^*(t_{\rm cr})$. Interestingly, this natural situation is not covered by the previous classification results of \cite{CapitainePeche,Erdosetal}. Moreover, two different limiting processes can be found at the merging:  if both bulks are  ``in balance'', e.g.~in symmetric settings, then the Pearcey process emerges for $n\to\infty$. If one bulk ``dominates the other'', we surprisingly find the Airy line ensemble in the interior of the spectrum. The condition that determines whether the Pearcey process or the Airy line ensemble appears, is non-local and remarkably simple: we have the Pearcey process if the integral
	\begin{align}
	\int\frac{\psi(s)ds}{(x^*-s)^3}\label{integral}
	\end{align} vanishes, and the {Airy} line ensemble otherwise.

{Let us now proceed towards a precise statement of our results.}
Let 
\begin{align}
\mu_n:=\frac1n\sum_{j=1}^n\d_{X_j(0)}
\end{align}
denote the empirical measure of the initial points, which we will assume to be deterministic.
\paragraph{\bf Assumption 1.} The support of all empirical measures $\mu_n$ is contained in a fixed (independent of $n$) bounded set, and \(\mu_n\) converges weakly to a probability measure \(\mu\) as \(n\to\infty\). We assume that $\mu$ has a density $\psi$ w.r.t.~the Lebesgue measure, which is continuous as a function on the support and such that there is a point $x^*\in\R$ with
\begin{equation}\label{eq:psilocal}\psi(x)\sim c\vert x-x^{*} \vert^{\kappa}\quad, x\to x^*, \end{equation}
with \(\kappa>2\) and some constant $c>0$. {We will call $x^*$ a \textit{critical point}.}

{Before commenting on the role of $\k$, we recall the well-known fact of free probability theory that} for any fixed time $t>0$, the empirical measure 
\begin{align}
\mu_{n,t}:=\frac1n\sum_{j=1}^n\d_{X_j(t)}\label{pp}
\end{align} 
converges as $n\to\infty$ weakly almost surely to a probability measure $\mu(t)$ given by the {{\em free additive convolution}} of $\mu$ and the semicircle distribution with support $[-2\sqrt t,2\sqrt t]$ \cite[Chapter 5]{AGZ}. This measure has again compact support and a continuous density $\psi_t$ for $t>0$ \cite{Biane}.

\begin{rmk}[On the role of $\k$]\label{rmk_kappa}\leavevmode
	\begin{enumerate}
		\item If $\kappa\leq 1$, then for all $t>0$ sufficiently small, the density \(\psi_t\) of the evolved measure $\mu(t)$ is positive at $x^*$, meaning that the zero of $\psi$ is instantly removed. {There are thus no separate bulks of eigenvalues in this case.} It is shown in \cite{ClaeysNeuschelVenker} that for $0\leq\k<1$, under certain natural assumptions on the initial points, sine kernel correlations arise close to $x^*$ {already} for very short times $$t=t_n=\lb\frac{\log(n)^{1+\rho}}{n}\rb^{\frac{1-\k}{1+\k}},$$ for any $\rho>0$. 
		\item If $\kappa>1$, then for all sufficiently small $t>0$, there is a point $x^*(t)$ near $x^*$ such that $\psi_t(x^*(t))=0$. Thus in this case the zero in the support of $\psi$  {persists} and is propagated {for some strictly positive time along a specific path  which separates two bulks of eigenvalues.} See Figure \ref{figure1} {below} for a visualization of a realization of NIBM in this case. {In such situations}, it is not reasonable to expect sine kernel universality for small values of $t$. It can be expected though, that for small $t$ there are asymptotically no (non-trivial) local correlations of $X(t)$ in the vicinity of the critical point $x^*$ (or rather a suitably evolved critical point $x^*(t)$), whereas sine kernel universality should be observable only for times \(t\) beyond some particular critical time $t_{\rm cr}>0$. Both expectations have been confirmed (to some extent) in \cite{ClaeysNeuschelVenker}. In the present work, we investigate the correlations exactly at the critical time, i.e. we focus precisely on the location and the time at which the transition from deterministic behaviour to random matrix behaviour takes place.
		\item {There appears to be a genuine difference between the cases \(1<\kappa\leq 2\) and \(\kappa>2\) regarding universality in the initial conditions, see Remark \ref{remark:Airy} below.}
	\end{enumerate}
\end{rmk}
When the process evolves in time, in general $x^*$ will not be a zero of the density $\psi_t$ for any $t>0$, even in the case of a zero persisting for some time.
{In order to keep track of the critical point as time evolves, we  consider a deterministic time evolution $(x^*(t))_t$ of the initial critical point $x^*$.} 
{It can be  interpreted as the typical evolution of a path, located at $x^*$ at time $0$, until it gets included in the support of $\psi_t$ at some critical time $t_{\rm cr}(x^*)$.}

To define this evolution, we recall a description of the {evolved} measure $\mu(t)$ and its density $\psi_t$ due to Biane \cite{Biane}: We define for \(x \in \mathbb{R}\) and \(t>0\),
\begin{align}
y_{t,\mu}(x):=\inf\left\{y>0 : \int\frac{d\mu(s)}{(x-s)^2+y^2} \leq \frac{1}{t} \right\}\label{def:y}
\end{align} 
and we set
\begin{align}
\Phi_t(x):=H_{t,\mu}\left(x+i y_{t,\mu}(x)\right),\label{def:Phi}
\end{align}
where
\begin{align}
H_{t,\mu}(z):=z+tG_{\mu}(z),\quad\text{ and }\quad G_{\mu}(z):=\int \frac{\mu(ds) }{z-s} \label{def:Stieltjes}
\end{align}
is the Stieltjes transform of $\mu$. For fixed \(t >0\), $\Phi_t$ is a bijection from \(\mathbb{R}\) to \(\mathbb{R}\).   The functions $y_{t,\mu}$ and $\Phi_t$ make it possible to express the density $\psi_t$ of  $\mu(t)$ (which exists for $t>0$ for any $\mu$) as 
\begin{align}
\psi_t(\Phi_t(x))=\frac{y_{t,\mu}(x)}{\pi t}.\label{eq:densityPhi_t}
\end{align}
This representation is convenient for us as the right-hand side does not depend on $\mu(t)$ but on $\mu$ alone, which is easier to describe in terms of our assumptions. Moreover, it suggests to define  the evolution of a point $x\in\R$ as $$x(t):=\Phi_t(x).$$ To connect this analytic definition to  {NIBM}, we note that $x(t)$ solves the {\em linearized mean field equation} 
\begin{align}
x(t)=x+t\,\textup{P.V.}\int \frac{1}{x(t)-s}\mu(t)(ds),\label{mean_field_evolution}
\end{align}
where the principal value integral is the Hilbert transform of $\mu(t)$ at $x(t)$. {Recall that $\mu(t)$ is the a.s.~weak limit of  $\mu_{n,t}$ from \eqref{pp}.}
From a heuristic point of view, equation \eqref{mean_field_evolution} can be derived from \eqref{SDE} by considering a particle $X_j(t)$ with $X_j(0)=x$ and replacing the drift term 
$$\int_{\not=}\frac{1}{X_j(t)-s}\mu_{n,t}(ds) $$
 (with $\int_{\not=}$ understood as excluding the $j$-th particle) by its natural limit $$\textup{P.V.}\int \frac{1}{x(t)-s}\mu(t)(ds).$$

To understand the definition of the evolution $t\mapsto x(t)$, we observe that by \eqref{eq:densityPhi_t} and \eqref{def:y} $\psi_t(x(t))=0$ if and only if
\begin{align*}
\int\frac{\mu(ds)}{(x-s)^2}\leq\frac1t.
\end{align*}
This explains in particular the distinction between the cases  $\k\leq1$ {and} $\k>1$ in Remark \ref{rmk_kappa}. Moreover, defining for a general $x\in\R$ the \textit{critical time} \(t_{\rm cr}=t_{\rm cr}(x)\) by
\begin{align}
t_{\rm cr}(x):=\left(\int\frac{\mu(ds)}{(x-s)^2}\right)^{-1} = -\frac{1}{G_{\mu}'(x)}\label{def:t_cr}
\end{align}
with the convention $t_{\rm cr}=0$ if the integral is $\infty$, we have that
{\[\psi_t (x(t))\begin{cases}
=0,&\quad\text{ if }\ 0< t \leq t_{\rm cr}(x),\\
>0,& \quad  \text{ if }\ t>t_{\rm cr}(x).
\end{cases} \]}
Up to the critical time, {$x(t)$} is linear in $t$, which can be seen readily from \eqref{def:Phi} and \eqref{def:Stieltjes}. 

Returning to the critical point $x^*$ (in the sense of \eqref{eq:psilocal}) and $\k>2$, we have the following:  $x^*(t)$ is critical for any $t$ up to the critical time 	{$t_{\rm cr}:=t_{\rm cr}(x^*)>0$} (meaning that $\psi_t(x^*(t))=0$ for $t\leq t_{\rm cr}$) and the transition from deterministic to sine kernel statistics is expected to occur precisely around $x^*(t_{\rm cr})$ at time $t_{\rm cr}$.

Some analytic aspects of the density $\psi_{t_{\rm cr}}(x)$ for $x$ close to $x^*(t_{\rm cr})$ have been studied in \cite{ClaeysKuijlaarsLiechtyWang} for the special case of $\k$ being an even integer, where it  {was} shown that the behavior of  \(\psi_{t_{\rm cr}}\) at the critical time in a neighborhood of \(x_{t_{\rm cr}}^{*}\) depends on the sign of \(G_{\mu}''(x^{*})\), which is \eqref{integral} up to a factor 2. More precisely, for integer-valued $\k\geq4$, we have  
\begin{equation} \label{eq:psilocalcritthmII}
\psi_{t_{\rm cr}}(x) =
\begin{cases}  
\frac{\sqrt{2}}{\pi t_{\rm cr}^{3/2} G_\mu''(x^*)^{1/2}}
\left| x-x^*(t_{\rm cr}) \right|^{1/2} (1 + o(1)), & 
\text{as } x \to x^*(t_{\rm cr})_-, \quad G_{\mu}''(x^{*})>0,\\[15pt]  
\mathcal O\left(|x-x^*(t_{\rm cr})|^{\frac{\kappa-1}{2}}\right), &\text{as } x \to x^*(t_{\rm cr})_+,\quad G_{\mu}''(x^{*})>0,\\[15pt]
\frac{\sqrt{3}}{2\pi t_{\rm cr}^{4/3} G_\mu'''(x^*)^{1/3}}
\left| x-x^*(t_{\rm cr}) \right|^{1/3} (1 + o(1)), & 
\text{as } x \to x^*(t_{\rm cr}), \quad G_{\mu}''(x^{*})=0.
\end{cases}
\end{equation}
 The case $G_\mu''(x^*)<0$ is analogous to $G_\mu''(x^*)>0$ with obvious changes.  See Figure \ref{Criticalden} for {visualizations of the two fundamentally different local behaviors of the density.}

For  $G_{\mu}''(x^{*})=0$, the appearance of a cubic root zero in the asymptotics \eqref{eq:psilocalcritthmII} suggests  Pearcey type fluctuations around the cusp $x^*(t_{\rm cr})$. For $G_{\mu}''(x^{*})\not=0$, the situation is a priori not clear because we have square root behaviour as $x^*(t_{\rm cr})$ is approached from one side, but a different order of vanishing as it is approached from the other side. One may argue on a heuristic level that the non-square root decay is faster, {giving} a less strong accumulation of particles compared to behavior of the particles on the side of the square root vanishing. This should result in larger distances between particles on the non-square root side than 
the distances predicted by the square root behavior and hence {we might expect the Airy line ensemble making its appearance in this unusual situation.}
Below we put these heuristics for all $\k>2$ (not necessarily integers) on firm ground.\\

\begin{figure}[h]
	\centering
	\begin{minipage}[t]{0.45\linewidth}
		\centering
		\includegraphics[scale=0.33]{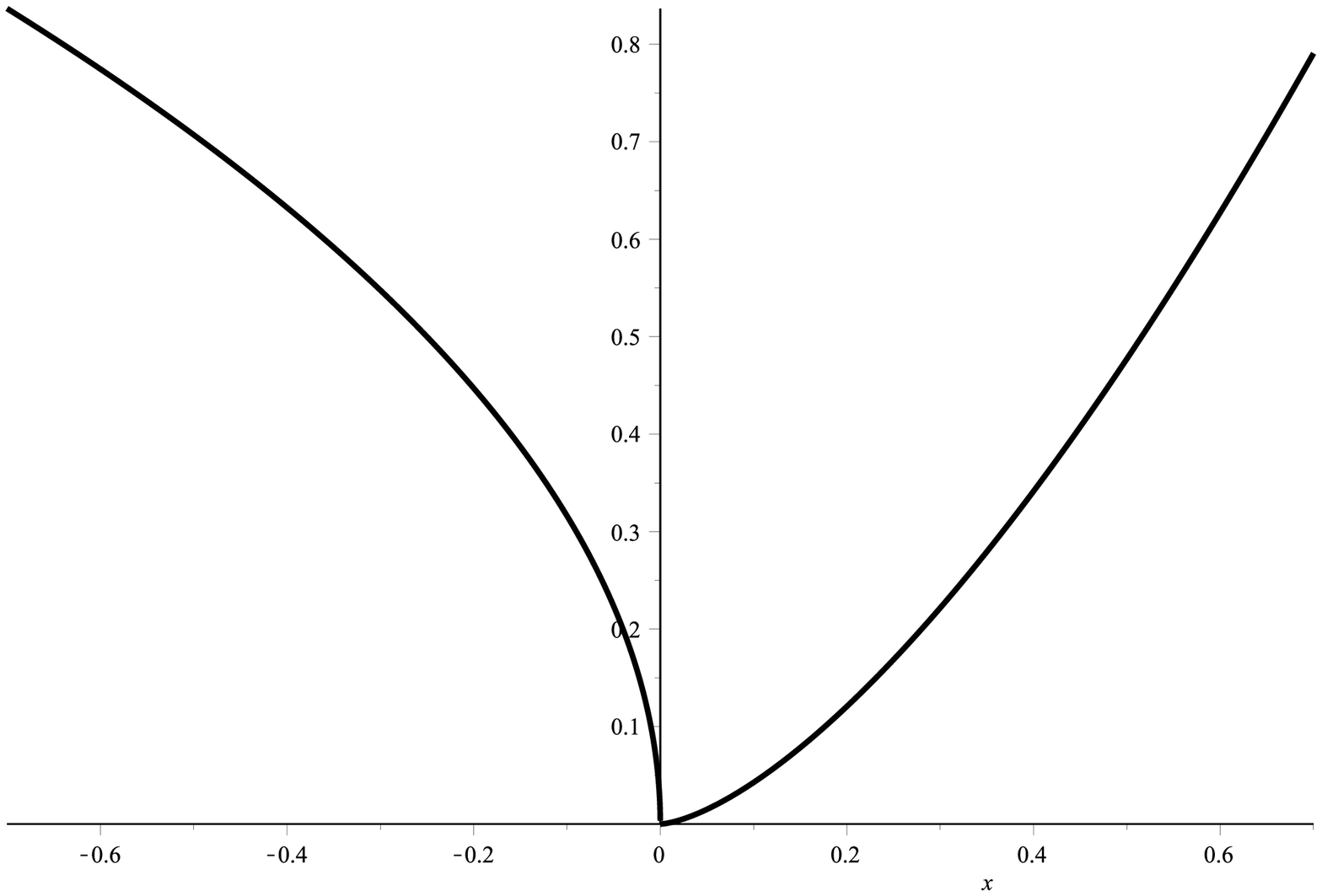}
	\end{minipage}%
	\hfill
	\begin{minipage}[t]{0.45\linewidth}
		\centering
		\includegraphics[scale=0.33]{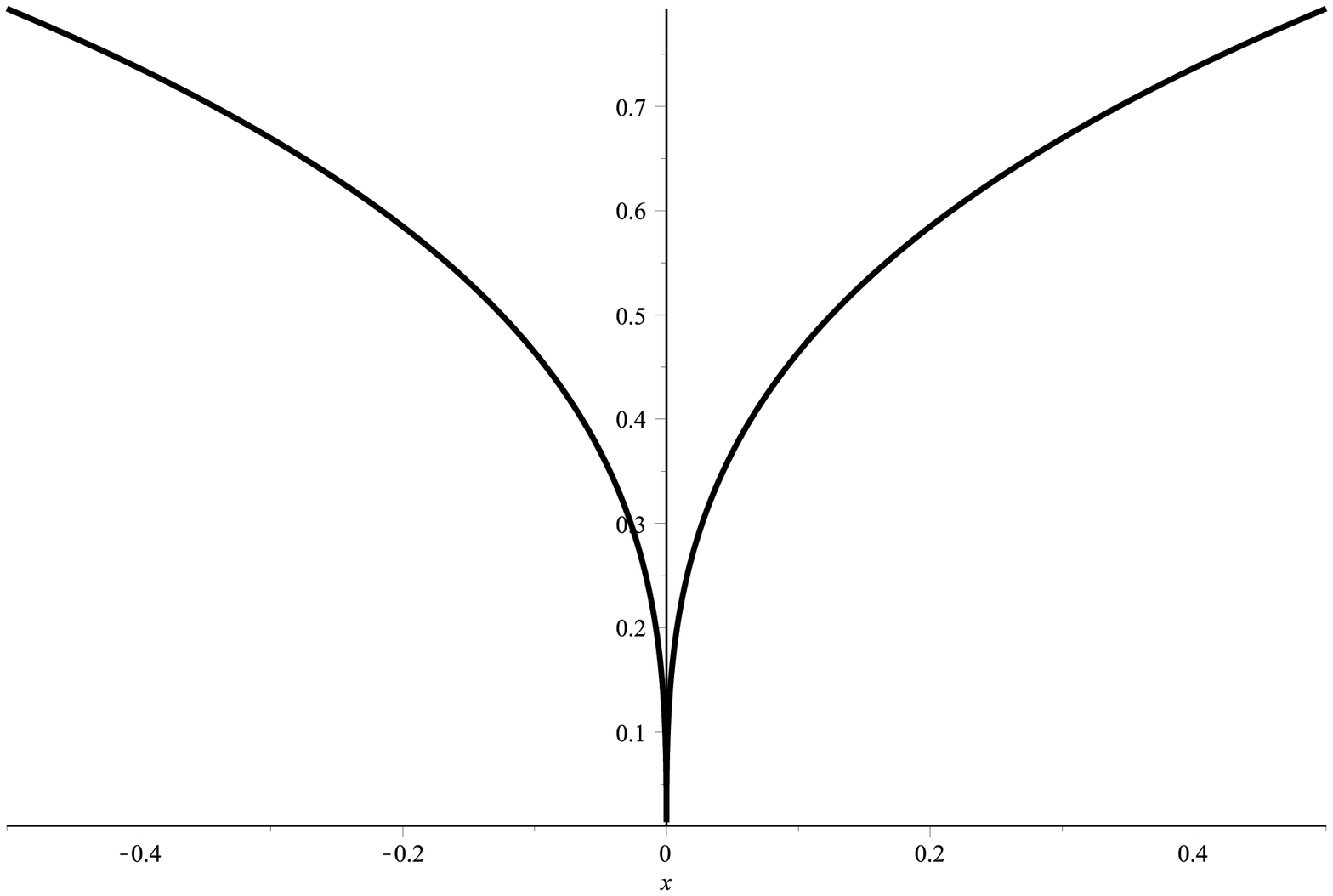} 
	\end{minipage} 
	\label{Criticalden}
	\caption{Typical local behavior of the density at the critical point $x^*(t_{\rm cr})=0$ and at the critical time, for $\kappa=4$ and $G''_\mu(x^*)>0$ on the left, and for $G''_\mu(x^*)=0$ on the right. }
\end{figure}

{It is well-known that the space-time correlation functions  {of} NIBM have a determinantal structure. We refer the reader to Appendix \ref{appendix:correlations} for the definition and a review of relevant properties and expressions of the correlation functions. Here, we restrict ourselves to stating that the space-time correlation functions 
$\rho^{(n)}_{t_1,\dots,t_k}(\x^{(1)}_{m_1},\dots,\x^{(k)}_{m_k})$ {with $\x^{(j)}_{m_j}:=(x^{(j)}_1,\dots,x^{(j)}_{m_j})$}, corresponding to $k$ times $t_1,\ldots, t_k$ and $m_j$ positions $x_1^{(j)},\ldots, x_{m_j}^{(j)}$ at time $t_j$, can be written as
\begin{align}\label{determinantality}
\rho^{(n)}_{t_1,\dots,t_k}(\x^{(1)}_{m_1},\dots,\x^{(k)}_{m_k})=\det\lb\Big[ K_{n,t_i,t_j}\lb x^{(i)}_p,x^{(j)}_q\rb\Big]_{\substack{1\leq p\leq m_i\\1\leq q\leq m_j}}\rb_{1\leq i,j\leq k},
\end{align}
where the matrix on the right-hand side of \eqref{determinantality} is of size $\lb\sum_{l=1}^k m_l\rb^2$ and the correlation kernel $K_{n,s,t}(x,y)$ is given by \cite{TracyWidom2,KatoriTanemura}
\begin{multline}\label{def:Kn}
K_{n,s,t}(x,y)\\
:=\frac{n}{(2\pi i)^2 \sqrt{st}} \int_{x_0+i\R} dz \int_{\Gamma} dw ~ \frac{\exp\lb\frac{n}{2t} \left[(z-y)^2 +2t g_{\mu_n}(z)\right]-\frac{n}{2s} \left[(w-x)^2 +2s g_{\mu_n}(w)\right]\rb}{z-w}\\
-1(s>t)\frac{\sqrt{n}}{\sqrt{2\pi(s-t)}}\exp\lb-\frac{n}{2(s-t)}(x-y)^2\rb.
\end{multline}
Here $g_{\mu_n}$ is given by \[g_{\mu_n}(z) = \int \log(z-s) d\mu_n(s),\]
defined by choosing the principal branch of the logarithm,
\(\Gamma\) is a positively oriented closed curve in the complex \(w\)-plane that encircles all the initial particles \(x_1^{(0)},\ldots, x_n^{(0)}\) (or a finite union of disjoint closed curves such that each initial particle has winding number $1$) and does not intersect with $x_0+i\R$, where $x_0\in\R$ is arbitrary apart from the condition on non-intersection with $\Gamma$.
}

In order to evaluate the asymptotic behavior of the double integral in the above formula for the kernel for large values of \(n\), on a technical level we will require a decent control of derivatives of $g_{\mu_n}$ locally. To this end, we make the following assumptions.

\medskip

\paragraph{\bf Assumption 2.}
We assume that there exist constants $M>0, n_0>0$ such that
\begin{equation}\label{eq:assumptionFnF}
|F_n(x)-F(x)|\leq \frac{M}{n}
\end{equation}
for all $n\geq n_0$ and for all $x\in\mathbb R$, where $F_n$ and $F$ denote the distribution functions of $\mu_n$ and $\mu$, respectively.

We can interpret this assumption as a condition on the rigidity of the eigenvalues at the initial time \(t=0\), which is satisfied, for instance, if for any $j$, the $j$-th smallest eigenvalue $X_j(0)$ lies sufficiently close to the (suitably defined) $j/n$-quantile of $\mu$.  Although it seems possible to relax this assumption for $x$ not too close to $x^*$, we choose not to do this to avoid further technical complications. It is important to point out, however, that the assumption can not be removed entirely. Indeed, the weak convergence of $\mu_n$ to $\mu$ does not imply any quantitive comparability of $\mu_n$ and $\mu$ on smaller scales which is needed when considering space-time correlations localized around $(t_{\rm cr},x^*(t_{\rm cr}))$ (cf.~\cite[Theorem 1.4]{ClaeysNeuschelVenker}).

\medskip

\paragraph{\bf Assumption 3.}
There exist constants $m>0$ and $n_0\in\mathbb N$ such that the interval $\left[x^*-mn^{-\frac{1}{\kappa+1}}, x^*+mn^{-\frac{1}{\kappa+1}}\right]$ has no intersection with the support of $\mu_n$ for $n\geq n_0$.

\medskip

{This third assumption in particular forbids the existence of (isolated) initial particles that lie right in the middle of the two bulks. Such particles might change the limiting behavior at criticality much like the separation of $r$ particles at the edge induces a change from Airy kernel to $r$-Airy kernel statistics \cite{ADvM}. In this sense, we believe an assumption like Assumption 3 to be necessary. It} is not very restrictive in the sense that the distances between consecutive quantiles of the limiting distribution $\mu$ near the critical point \(x^* \)  {are} precisely of order \(n^{-\frac{1}{\kappa+1}}\). {As a simple example of an initial configuration satisfying Assumptions 1-3, let $\mu$ be as in Assumption 1 and consider as initial points the $j/n$-quantiles of $\mu$ with $j=1,\dots,n$. In case that $x^*$ is one of these quantiles, we modify this initial point by a displacement of order at least $n^{-\frac{1}{\kappa+1}}$. }  We emphasize that all of our assumptions 
are directly related to the initial configuration ($t=0$) of the particles, and do not require computing the evolved configuration of the particles around the critical time.

\medskip

Let us now proceed towards {our main results which differ depending on whether 
{$G_{\mu}''(x^*)$}
is zero or non-zero{, see \eqref{integral} and \eqref{def:Stieltjes}}. We first consider the case $G_{\mu}''(x^*)>0$, the case $G_{\mu}''(x^*)<0$ leading to analogous results with obvious changes.} To this end, let us define
\begin{equation}\label{def:tn}c_2:=\frac{2^{1/3}}{G_\mu''(x^*)^{1/3}t_{\rm cr}},\qquad
t_{n}^{\rm Ai}(\t):=t_{\rm cr}+\frac{2\tau}{c_2^2  n^{1/3}},
\end{equation}
for $\t\in\R$.
The parameter $\t$ should be seen as a new time parameter on a time scale of order $n^{-1/3}$ around the critical time $t_{\rm cr}$. Finally, in order to obtain a decent convergence of the kernel instead of convergence of the correlation functions \eqref{determinantality} merely, we define a conjugation of the kernel $K_{n,s,t}$ by
{\begin{align}
\tilde K_{n,s,t}(x,y):=K_{n,s,t}(x,y)\exp({f_n(t,y)-f_n(s,x)})\label{conjugation}
\end{align}}
with the gauge factors 
\begin{align}\label{gauge_factor}
f_n(s,x):=-nG_{\mu}(x^*)x+\frac{nG_{\mu}(x^*)^2s}2.
\end{align}
We note that any kernel of the form \eqref{conjugation} (with reasonable $f_n$) gives the same correlation functions as $K_{n,s,t}$. 
Moreover, we set
\begin{align}
x^{\Ai}_n(\t):=x^{*}+t_n^\Ai(\t)G_{\mu}(x^*).\label{def:xnAiry}
\end{align}
For $\t\leq 0$, we have
\begin{align}
x^{\Ai}_n(\t)=x^{*}(t_n^\Ai(\t)),
\end{align}
as is easily seen from {\eqref{def:Stieltjes}.} For $\t>0$, it is a linearization of the evolution $x^{*}(t_n^\Ai(\t))$, which is non-linear then.
 Our first main result now reads as follows.

\begin{thm}\label{thm:main}
Suppose that Assumptions 1--3 hold and that $G_{\mu}''(x^*)>0$. Then, as $n\to\infty$, we have for any $\s>0$ {and any $0<\e<\min\lb\frac{\k-2}{6(\k+1)},\frac1{15}\rb$}
\begin{align}
&\frac{1}{c_2n^{2/3}}\tilde K_{n,t_{n}^{\rm Ai}(\t_1),t_{n}^{\rm Ai}(\t_2)}\left(x^{\Ai}_n(\t_1)+\frac{u}{c_2n^{2/3}}, x^{\Ai}_n(\t_2)+\frac{v}{c_2n^{2/3}}\right)\\
&=\mathbb K^{\rm Ai}_{\t_1,\t_2}(u,v)+\O\lb \frac{e^{- \s(u+v)}}{n^{\e}}\rb,\label{thrm_1_convergence}
\end{align}
 where $\mathbb K^{\rm Ai}_{\t_1,\t_2}(u,v)$ is the extended Airy kernel given by
 \begin{align}\label{extended_Airy}
 \mathbb K^{\rm Ai}_{\t_1,\t_2}(u,v)&:=\frac{1}{(2\pi i)^2} \int_{\Sigma^{\rm Ai}} d\zeta \int_{\Gamma^{\rm Ai}} d\w ~ \frac{\exp\lb\frac{\zeta^3}3-\zeta v-\t_2\zeta^2-\frac{\w^3}3+\w u+\t_1\w^2\rb}{\zeta-\w}\\
&-1(\t_1>\t_2)\frac{1}{\sqrt{4\pi(\t_1-\t_2)}}\exp\lb-\frac{(u-v)^2}{4(\t_1-\t_2)}\rb.
 \end{align}
Here $\Sigma^{\Ai}$ consists of the two rays from $\infty e^{-i\frac{\pi}3}$ to 0 and from 0 to  $\infty e^{i\frac{\pi}3}$ and $\Gamma^{\rm Ai}$ consists of the two rays from $\infty e^{-i\frac{2\pi}3}$ to 0 and from 0 to  $\infty e^{i\frac{2\pi}3}$.

The $\O$-term in \eqref{thrm_1_convergence} is uniform with respect to $\tau_1,\t_2$ belonging to any compact subset of $\mathbb R$, and uniform with respect to $u,v\in [-M, Mn^{\epsilon}]$, where $M>0$ can be any positive constant. 
\end{thm}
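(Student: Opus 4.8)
The plan is to perform a steepest descent analysis of the double contour integral in \eqref{def:Kn}, adapted to a degenerate critical point of order three. First we substitute $s=t_n^{\rm Ai}(\tau_1)$, $t=t_n^{\rm Ai}(\tau_2)$, $x=x_n^{\rm Ai}(\tau_1)+\frac{u}{c_2n^{2/3}}$ and $y=x_n^{\rm Ai}(\tau_2)+\frac{v}{c_2n^{2/3}}$ into \eqref{def:Kn} and \eqref{conjugation}, noting that the conjugation produces an overall factor $e^{f_n(t,y)-f_n(s,x)}$ which, as the computation will show, cancels exactly those contributions to the exponent that would otherwise grow with $n$. Writing the $w$-integrand as $e^{-n\phi_s(w)}$ and the $z$-integrand as $e^{n\phi_t(z)}$ with $\phi_s(w)=\frac{(w-x)^2}{2s}+g_{\mu_n}(w)$, the key preliminary step is to classify the relevant critical point of the phase: at $s=t_{\rm cr}$, $w=x^*$ and $x=x^*(t_{\rm cr})=x^*+t_{\rm cr}G_\mu(x^*)$ one finds $\phi'(x^*)=0$, then $\phi''(x^*)=\frac{1}{t_{\rm cr}}+G_\mu'(x^*)=0$ — which is precisely the defining relation \eqref{def:t_cr} for $t_{\rm cr}$ — and finally $\phi'''(x^*)=G_\mu''(x^*)\neq0$. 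Hence $x^*$ is a degenerate critical point of order exactly three and the local model is the Airy kernel; here the hypotheses $G_\mu''(x^*)\neq0$ and, via Assumption 1, $\kappa>2$ (which makes $G_\mu''$ and $G_\mu'''$ finite at $x^*$, so the cubic term is genuinely leading) are used in an essential way. A further computation shows that the remaining dependence on $u,v$ and $\tau_1,\tau_2$ reproduces, after the rescaling below, precisely the terms $-\zeta v-\tau_2\zeta^2$ and $\omega u+\tau_1\omega^2$ of \eqref{extended_Airy}, the constants in \eqref{def:tn} and \eqref{def:xnAiry} being engineered so that all prefactors match.

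Next we would replace $\mu_n$ by $\mu$ inside $g_{\mu_n}$ and its derivatives. Using Assumption 2 together with integration by parts, one controls $g_{\mu_n}-g_\mu$, $G_{\mu_n}-G_\mu$ and the relevant higher derivatives along the deformed contours; the danger zone is a neighbourhood of $x^*$, but there Assumption 3 keeps the contours at distance $\gtrsim n^{-1/(\kappa+1)}$ from $\supp\mu_n$, which combined with the power law \eqref{eq:psilocal} yields the needed bounds on the derivatives of $g_{\mu_n}$. Carrying these estimates through the rescaled integral produces the error term $\mathcal O(n^{-\epsilon})$: the restriction $\epsilon<\frac{\kappa-2}{6(\kappa+1)}$ reflects the competition between the gap scale $n^{-1/(\kappa+1)}$ and the Airy scale $n^{-1/3}$, while $\epsilon<\frac1{15}$ comes from cruder global bounds away from $x^*$.

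The heart of the argument, and the step I expect to be the main obstacle, is the contour deformation. One deforms $x_0+i\R$ and $\Gamma$ into steepest descent contours for the respective integrands: since $G_\mu''(x^*)>0$, the steepest descent directions at $x^*$ are $\pm\frac{2\pi}{3}$ for $e^{-n\phi_s(w)}$ and $\pm\frac{\pi}{3}$ for $e^{n\phi_t(z)}$, so locally the contours match $\Gamma^{\rm Ai}$ and $\Sigma^{\rm Ai}$. The asymmetry inherited from \eqref{eq:psilocalcritthmII} — square-root vanishing of $\psi_{t_{\rm cr}}$ on one side of $x^*(t_{\rm cr})$, slower vanishing on the other — forces $\Gamma$, which must encircle all of $\supp\mu_n$, to be routed so that the portion dominating the integral is the one wrapping the bulk to the left of $x^*$ and passing through the gap, while $x_0+i\R$ is bent to pass through $x^*$ opening to the right. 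Away from $x^*$ the contours must be chosen so that $\operatorname{Re}(\phi_t(z)-\phi_s(w))$ is bounded above by a strictly negative constant uniformly in $n$ and in the parameters, so that those portions — in particular the part of $\Gamma$ enclosing the bulk to the right of $x^*$ — contribute only $\mathcal O(e^{-cn^\delta})$ for some $\delta>0$. Establishing this global sign structure for $\mu$ and then showing it persists under the perturbation $\mu_n\rightsquigarrow\mu$ is the delicate part. Once it is in place, on the local pieces we set $z-x^*=\frac{\zeta}{\alpha n^{1/3}}$, $w-x^*=\frac{\omega}{\alpha n^{1/3}}$ with $\alpha=(G_\mu''(x^*)/2)^{1/3}=1/(c_2t_{\rm cr})$, Taylor expand the phase to cubic order, extend the resulting finite contours to the full rays $\Sigma^{\rm Ai},\Gamma^{\rm Ai}$ at the cost of a further exponentially small error, and identify the double integral in \eqref{extended_Airy}; the prefactor $\frac{1}{c_2n^{2/3}}\cdot\frac{n}{(2\pi i)^2\sqrt{st}}$ combines with $\frac{dz\,dw}{z-w}$ to give exactly $\frac{1}{(2\pi i)^2}\frac{d\zeta\,d\omega}{\zeta-\omega}$.

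Finally we would treat the last term of \eqref{def:Kn} directly: for $n$ large $1(s>t)=1(\tau_1>\tau_2)$, and a short Gaussian computation — in which the gauge factors again cancel the divergent part of the exponent and the constants of \eqref{def:tn}, \eqref{def:xnAiry} produce the correct normalisation — shows that $\frac{1}{c_2n^{2/3}}$ times this term converges to $-1(\tau_1>\tau_2)\frac{1}{\sqrt{4\pi(\tau_1-\tau_2)}}\exp\!\big(-\frac{(u-v)^2}{4(\tau_1-\tau_2)}\big)$. Uniformity in $\tau_1,\tau_2$ on compacts is immediate since they enter only as a bounded perturbation of the phase. The factor $e^{-\sigma(u+v)}$ and the admissibility of $u,v\in[-M,Mn^{\epsilon}]$ are obtained by translating the local rays (replacing $\zeta\mapsto\zeta-\sigma'$ and $\omega\mapsto\omega+\sigma'$ for a suitable $\sigma'=\sigma'(\sigma)$), which turns the linear terms $-\zeta v$ and $\omega u$ into exponential decay in $u,v$ at the cost of only a bounded factor, provided $u,v$ stay below a small power of $n$. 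Collecting the errors from all steps then yields the claimed bound $\mathcal O(e^{-\sigma(u+v)}n^{-\epsilon})$ in \eqref{thrm_1_convergence}.
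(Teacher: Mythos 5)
Your proposal follows the same overall strategy as the paper's proof: a steepest-descent analysis of the double-contour representation \eqref{def:Kn}, with the saddle at $x^*$ identified as degenerate of order three through $\phi'(x^*)=\phi''(x^*)=0$, $\phi'''(x^*)=G_\mu''(x^*)\neq 0$; comparison of $g_{\mu_n}$ to $g_\mu$ locally via Assumptions 2--3; rescaling $z-x^*=c_2 t_{\rm cr}\zeta/n^{1/3}$ to reach the Airy phase; exponential suppression of the contour pieces away from $x^*$; and a shift trick for the decay factor $e^{-\sigma(u+v)}$. All of this matches the paper (the paper's Proposition~\ref{prop: Stieltjes comparison 2}, the split into $K_n^{(1)},K_n^{(2)},K_n^{(3)}$, Proposition~\ref{prop: int1} and Lemma~\ref{lemma: int2}).

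The one place where your plan is genuinely incomplete — and you yourself flag it as ``the delicate part'' — is the construction of a globally descending $w$-contour encircling all of $\supp\mu_n$ on which $\Re(\phi_{n,\tau_2}(z,v)-\phi_{n,\tau_1}(w,u))$ is uniformly negative away from $x^*$. Your plan of ``establishing this global sign structure for $\mu$ and then showing it persists under the perturbation $\mu_n\rightsquigarrow\mu$'' would be technically difficult, because $\mu$ and $\mu_n$ live on different scales near $x^*$ and the transfer of descent estimates across the perturbation is not straightforward. The paper bypasses this entirely by working with $\mu_n$ directly: it uses Biane's function $y_{t,\mu_n}$ from \eqref{def:y}, whose graph is automatically a descent contour for $w\mapsto -\phi_{n,\tau_1}(w,u)$ and a ridge line for $z\mapsto \phi_{n,\tau_2}(z,v)$, as quantified in Lemma~\ref{lemma:realpartphi}: $\Re\phi_{n,\tau}$ is monotone along the graph away from the saddle, and for fixed $\Re z$ achieves its maximum on the graph. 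The contour $\Gamma$ is then taken to be (a local modification of) this graph, and $\Sigma$ is chosen to exit through the sector $\pm\pi/3$; the global sign bound of Lemma~\ref{lemma:auxiliaryremaining} follows from these monotonicity properties plus the local expansion \eqref{phase_function_local}, with no need to compare to a contour adapted to $\mu$. Two smaller discrepancies, both harmless: the local $w$-contour in the paper leaves $x^*$ at angle $\pi/2+\delta$ (not $\pm 2\pi/3$) and is only deformed to $\Gamma^{\rm Ai}$ after the $n^{1/3}$-rescaling; and in your shift argument for $e^{-\sigma(u+v)}$ the signs should be $\zeta\mapsto\zeta+\sigma'$, $\omega\mapsto\omega-\sigma'$ so that $-v\zeta\mapsto -v\zeta-\sigma'v$ and $u\omega\mapsto u\omega-\sigma'u$ both contribute decay for large positive $u,v$ (the paper instead absorbs $e^{\sigma(u+v)}$ directly into the integrand bound in \eqref{K_n_1_estimate}, which is equivalent).
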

\begin{figure}[h]
\centering
      \begin{minipage}[t]{0.45\linewidth}
			\centering
			 \includegraphics[trim=4cm  5cm 4cm 9cm,clip,width=\linewidth]{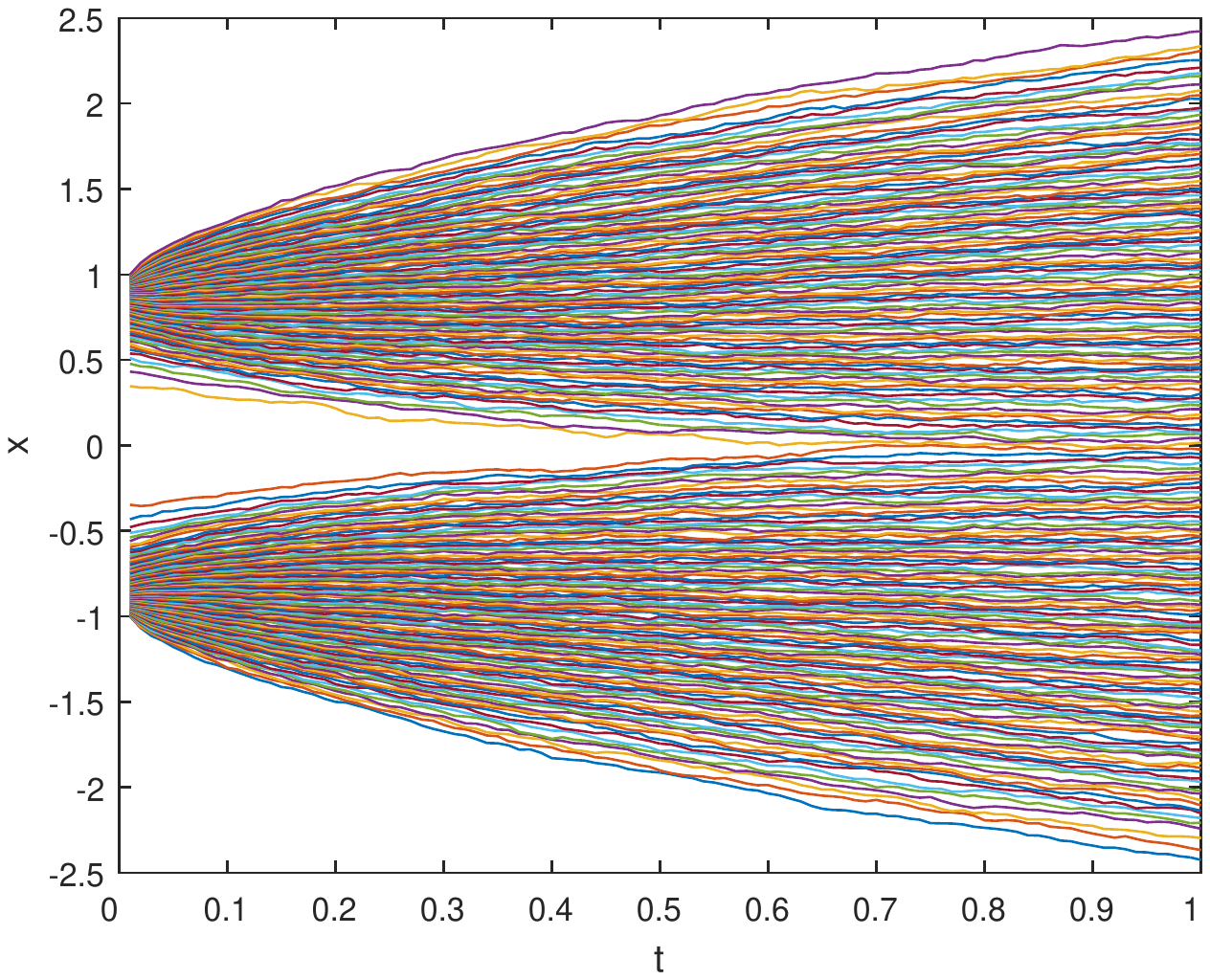}
			\end{minipage}%
			\hfill
			\begin{minipage}[t]{0.45\linewidth}
			\centering
\includegraphics[trim=4cm  5cm 4cm 9cm,clip,width=\linewidth]{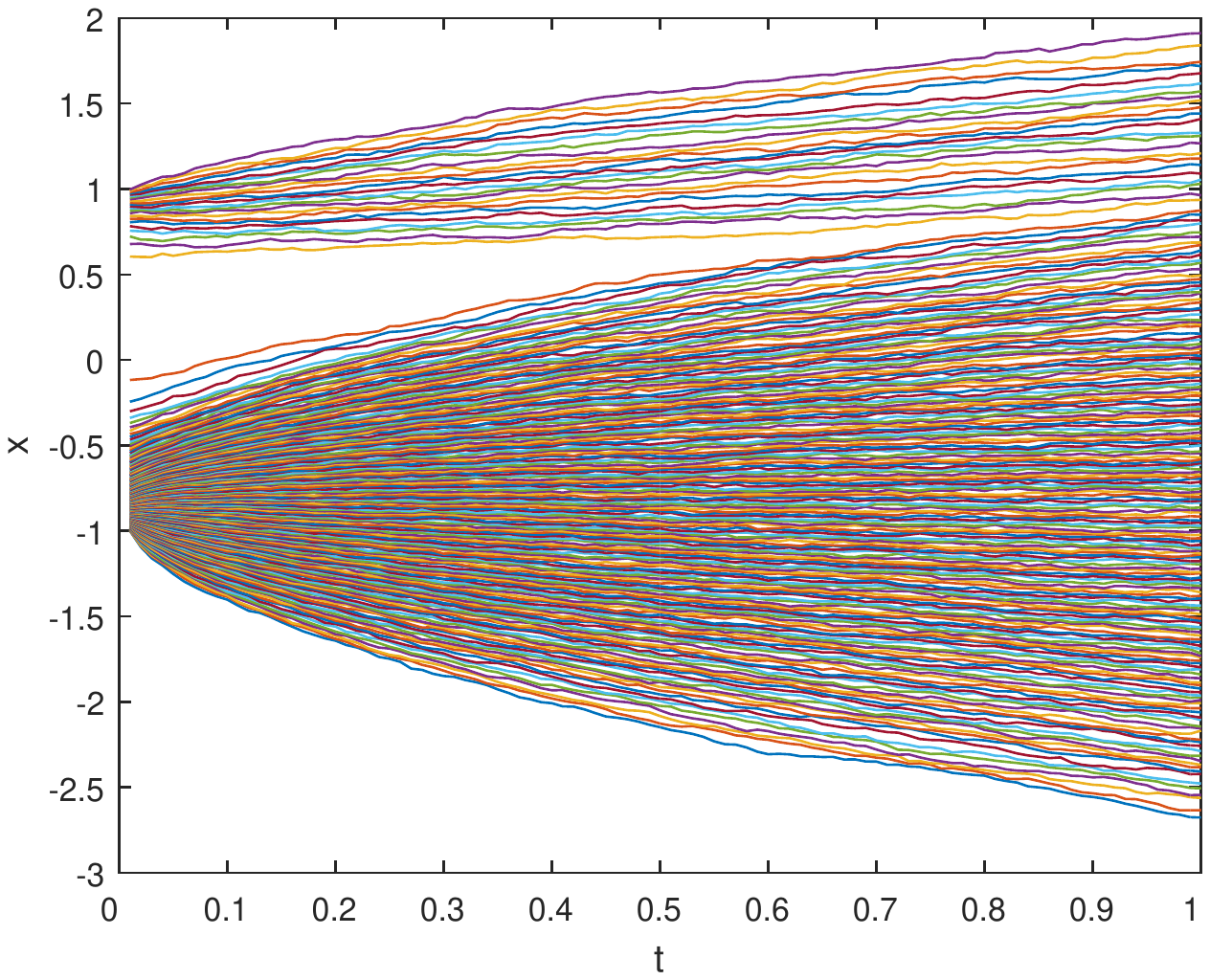} 
			\end{minipage} \vspace{-5em}\caption{Two samples of NIBM for $t\in[0,1]$ with $n=200$. The initial configuration consists of quantiles of the limiting measure $\mu$, with $d\mu(x)=\frac{5}{2}x^4 dx$ on $[-1,1]$ (left), and $d\mu(x)=c(x-0.2)^4 dx$ on $[-1,1]$ (right).
At the left, we have Pearcey statistics around the critical time $0.6$ at the critical point 0. At the right, we have Airy statistics around the critical time $\approx 0.7543$ at the critical point $\approx 0.7571$.
				}
			\label{figure1}
\end{figure}

\begin{rmk}\label{remark:Airy}\leavevmode
	\begin{enumerate}
		\item It is important to note that the exact definition of the extended Airy kernel is not unique in the literature. Essentially the same form as used here is given in \cite{DuseMetcalfe}. In \cite{Johansson03,BorodinKuan,Petrov}, the extended Airy kernel is defined as
		\begin{align*}
		\tilde{\mathbb  K}^{\rm Ai}_{\t_1,\t_2}(u,v):=\begin{cases}
		\int_0^\infty e^{-r(\t_2-\t_1)}\Ai(u+r)\Ai(v+r)dr,&\quad\text{if }\ \t_2\geq\t_1\\
		-\int_{-\infty}^0 e^{-r(\t_2-\t_1)}\Ai(u+r)\Ai(v+r)dr,&\quad\text{if }\ \t_2<\t_1.
		\end{cases}
		\end{align*}
		Represented as a double contour integral, $\tilde{\mathbb  K}^{\rm Ai}_{\t_1,\t_2}$ reads \cite[Equation (8.7)]{Petrov}
		\begin{align*}
		&\tilde{\mathbb K}^{\rm Ai}_{\t_1,\t_2}(u,v)\\
		&=\frac{1}{(2\pi i)^2} \int\limits_{\Sigma^{\rm Ai}} d\zeta \int\limits_{\Gamma^{\rm Ai}} d\omega~\frac{\exp\lb \frac{(\zeta-\t_2)^3}3-v(\zeta-\t_2)-\frac{(\w-\t_1)^3}3+u(\w-\t_1) \rb}{\zeta-\omega}\\
		&\quad -1(\t_1>\t_2)\frac{1}{\sqrt{4\pi(\t_1-\t_2)}}\exp\lb-\frac{(u-v)^2}{4(\t_1-\t_2)}-\frac12(\t_1-\t_2)(u+v)+\frac1{12}(\t_1-\t_2)^3\rb
		\end{align*}
		with the same contours as in \eqref{extended_Airy}. It is then straightforward to check that 
		{\begin{align}
		\exp\lb-\t_1u+\t_2v+\frac{\t_1^3-\t_2^3}3\rb\mathbb K^{\rm Ai}_{\t_1,\t_2}(u-\t_1^2,v-\t_2^2)=\tilde{\mathbb K}^{\rm Ai}_{\t_1,\t_2}(u,v).
		\end{align}} 
		Thus  $\mathbb K^{\rm Ai}_{\t_1,\t_2}$ and $\tilde{\mathbb K}^{\rm Ai}_{\t_1,\t_2}$ are equivalent in the sense that one can be transferred to the other via a conjugation and a change of variables. 
{		\item In \eqref{thrm_1_convergence}, the points $x^{\Ai}_n(\t_1)$ and $x^{\Ai}_n(\t_2)$ can not be replaced by the common $x^*(t_{\rm cr})=x^{\Ai}_n(0)$ if $\t_1\not=\t_2$. This shows how delicate the convergence in \eqref{thrm_1_convergence} is.
		}
		\item The extended Airy kernel generates a time-dependent determinantal point process called Airy line ensemble (or sometimes multi-line Airy process) \cite{CorwinHammond}. It can be seen as a collection of infinitely many paths that may be assumed continuous. It is stationary, a Markov process \cite{KatoriTanemura11} and the distribution of each path is locally absolutely continuous w.r.t.~the distribution of a multiple of Brownian motion \cite{CorwinHammond}.
		\item We do not consider the case of vanishing orders $1<\k\leq2$ in this work as it seems to be genuinely different from the case $\k>2$. In {the situation $1<\k\leq2$,} the integral \eqref{integral} does not exist, and it can be expected that for these $\k$, the exact form of $\mu_n$ and its limit will play a more prominent role for the asymptotic behavior. A similar remark applies to the Pearcey case below, in which we assume $\k>3$.
	\end{enumerate}
\end{rmk}
An important feature of the Airy line ensemble is the almost sure existence of a finite largest path, indicating that the Airy line ensemble occurs when a one-sided gap is present. {This largest path is called Airy$_2$ process and is well-known to occur at spectral edges of NIBM, generally describing the fluctuations of the largest eigenvalue of a bulk of eigenvalues.} Our finding of Airy correlations at the critical point is quite remarkable as no {gap} in the spectrum can be seen on a global scale (see e.g.~Figure \ref{Criticalden}). This raises the question of a gap at a smaller scale. We will address this question in our next theorem. We show that there is at any time $t_n^\Ai(\t)$, with high probability, indeed a gap in the random spectrum to the right of $x_n^\Ai(\t)$ at a mesoscopic scale, for whose size we give a lower bound. Moreover, we {identify} a particle around {$x^\Ai_n(\t)$} whose fluctuations follow the Airy$_2$ process. The Airy$_2$-process $(\A(\t))_{\t\in\R}$ has been introduced in \cite{PS} and may be defined by its finite-dimensional distributions as Fredholm determinants,
\begin{align}
P(\A(\t_1)\leq a_1,\dots,\A(\t_m)\leq a_m):=\det(I-\mathcal K^{\rm Ai}_{a_1,\dots,a_m} )_{L^2(\{\t_1,\dots,\t_m\}\times\R,\#\otimes \lambda)},\label{Airy_2_def}
\end{align}
where $\#$ and $\lambda$ denote counting and Lebesgue measure, respectively, {and $\mathcal K^{\rm Ai}_{a_1,\dots,a_m}$ is the integral operator acting on $L^2(\{\t_1,\dots,\t_m\}\times\R,\#\otimes \lambda)$ defined by}
\begin{align}\label{def_Fredholm_limit}
(\mathcal K^{\rm Ai}_{a_1,\dots,a_m}g)(\t,u):=\int_{\{\t_1,\dots,\t_m\}\times\R}r(\t,u)\mathbb K^{\rm Ai}_{\t,\t'}(u,v)r(\t',v)g(\t',v)\#(d\t')dv,
\end{align} 
and $r(\t_j,x):=1_{(a_j,\infty)}(x), \,j=1,\dots,m$.  

Alternatively, we may write the determinant in \eqref{Airy_2_def} as
\begin{align}
\det(I-\mathcal K^{\rm Ai})_{\bigoplus_{j=1}^m L^2((a_j,\infty),d\lambda)}
\end{align}
with $\mathcal K^{\rm Ai}$ being the integral operator on $\bigoplus_{j=1}^m L^2((a_j,\infty),d\lambda)$ with (block) kernel $(\mathbb K^{\rm Ai}_{\t_i,\t_j})_{1\leq i,j\leq m}$. A more explicit representation of Fredholm determinants will be given in the proof of Theorem \ref{corollary_TW} below.

The Airy$_2$-process is stationary and has a continuous version (see \cite{Johansson}). At any time $\t$, the distribution of $\mathcal A(\t)$ is the Tracy-Widom distribution (with parameter $\b=2$).

{\begin{thm}\label{corollary_TW}
	Let $0<\e<\min\lb\frac{\k-2}{6(\k+1)},\frac1{15}\rb$, $m \in \mathbb N$, $\t_1<\dots<\t_m\in\R$.
	\begin{enumerate}
		\item Let $0<\e'<\e$. Then there is a $\d>0$ such that for $n$ large enough,
		\begin{align}
		&\P\lb X_i(t_n^\Ai(\t_j))\notin\left[ x_n^\Ai(\t_j)+n^{\e'-\frac23},x_n^\Ai(\t_j)+n^{\e-\frac23}\right],\ i=1,\dots,n,\,j=1,\dots,m\rb\\
		&\geq 1-e^{-n^{\d}}.
		\end{align}
		\item	Define $\xi(\tau)$ as the largest particle of $X(t_{n}^{\rm Ai}(\t))$ which lies below the threshold value $x_n^{\rm Ai}(\t)+n^{-2/3+\e}$. Then we have for any $a_1,\dots,a_m\in\R$ as  $n\to\infty$
	\begin{align}
	&\P\left( c_2n^{2/3}(\xi(\t_1)-x_n^{\Ai}(\t_1))\leq a_1,\dots,c_2n^{2/3}(\xi(\t_m)-x_n^{\Ai}(\t_m))\leq a_m\right)\\
	&= \det(I-\mathcal K^{\rm Ai}_{a_1,\dots,a_m} )_{L^2(\{\t_1,\dots,\t_m\}\times\R,\#\otimes \lambda)}+\O(n^{-\e})
	\end{align}
	with an $\O$ term that is uniform for $a_1,\dots,a_m$ in compacts. In particular, as $n\to\infty$,
	\begin{align}
	(c_2n^{2/3}(\xi(\t)-x_n^{\rm Ai}(\t)))_{\t\in\R}\to (\A(\t))_{\t\in\R}\label{conv:xi}
	\end{align}
	in the sense of weak convergence of finite-dimensional distributions.
\end{enumerate}
\end{thm}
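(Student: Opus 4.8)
The plan is to derive both parts from the kernel asymptotics of Theorem \ref{thm:main}, the two decisive ingredients being the \emph{exponential} decay $e^{-\sigma(u+v)}$ of the error term there (with $\sigma>0$ at our disposal) and the classical super-exponential decay of the Airy kernel $\mathbb K^{\rm Ai}$ in its arguments.

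For part (1) I would use a first-moment bound. Set $I_j:=[x_n^{\rm Ai}(\tau_j)+n^{\epsilon'-2/3},\,x_n^{\rm Ai}(\tau_j)+n^{\epsilon-2/3}]$. By \eqref{determinantality} the one-point correlation function of $X(t_n^{\rm Ai}(\tau_j))$ is $K_{n,t_n^{\rm Ai}(\tau_j),t_n^{\rm Ai}(\tau_j)}(x,x)$, which by \eqref{conjugation} equals $\tilde K_{n,t_n^{\rm Ai}(\tau_j),t_n^{\rm Ai}(\tau_j)}(x,x)$; hence $\E[\#\{i:X_i(t_n^{\rm Ai}(\tau_j))\in I_j\}]=\int_{I_j}\tilde K_{n,t_n^{\rm Ai}(\tau_j),t_n^{\rm Ai}(\tau_j)}(x,x)\,dx$. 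Substituting $x=x_n^{\rm Ai}(\tau_j)+u/(c_2n^{2/3})$ turns $I_j$ into the $u$-interval $[c_2n^{\epsilon'},c_2n^\epsilon]$, which for $n$ large lies in $[-M,Mn^\epsilon]$ with $M:=c_2$, so Theorem \ref{thm:main} (with $\tau_1=\tau_2=\tau_j$ and $u=v$) rewrites this expectation as $\int_{c_2n^{\epsilon'}}^{c_2n^\epsilon}(\mathbb K^{\rm Ai}_{\tau_j,\tau_j}(u,u)+\O(e^{-2\sigma u}/n^\epsilon))\,du$. The Airy term is $\O(e^{-cn^{3\epsilon'/2}})$ by the asymptotics of $\Ai$, and the remainder is $\O(e^{-2\sigma c_2 n^{\epsilon'}})$, so the expectation is at most $e^{-c'n^{\epsilon'}}$ for some $c'>0$ and $n$ large. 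Markov's inequality and a union bound over $j=1,\dots,m$ then give part (1) with any $\delta<\epsilon'$. As a by-product this guarantees that, with overwhelming probability, $\xi(\tau)$ in part (2) is well defined and lies well inside the rescaled Airy window, so that the threshold $x_n^{\rm Ai}(\tau)+n^{-2/3+\epsilon}$ does not truncate it.

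For part (2) I would start from the deterministic set identity that holds whenever $a_j<c_2n^\epsilon$ (true for $a_j$ in a compact set and $n$ large): by definition of $\xi(\tau_j)$ as the largest particle of $X(t_n^{\rm Ai}(\tau_j))$ below $x_n^{\rm Ai}(\tau_j)+n^{-2/3+\epsilon}$, the event $\{c_2n^{2/3}(\xi(\tau_j)-x_n^{\rm Ai}(\tau_j))\le a_j\ \text{for all }j\}$ coincides with the event that $X(t_n^{\rm Ai}(\tau_j))$ has no particle in $(x_n^{\rm Ai}(\tau_j)+a_j/(c_2n^{2/3}),\,x_n^{\rm Ai}(\tau_j)+n^{-2/3+\epsilon}]$ for every $j$. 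By the determinantal structure \eqref{determinantality} of the space-time process and the invariance of Fredholm determinants under the conjugation \eqref{conjugation}, this probability equals $\det(I-\tilde K_n)_{L^2(E_n)}$, where $E_n$ is the union over $j$ of $\{\tau_j\}$ times the above interval. Rescaling each interval to $(a_j,c_2n^\epsilon]$, the task reduces to showing that the Fredholm determinant of the operator with block kernels $\tfrac{1}{c_2n^{2/3}}\tilde K_{n,t_n^{\rm Ai}(\tau_i),t_n^{\rm Ai}(\tau_j)}(\,\cdot\,,\,\cdot\,)$ on $\bigoplus_{j=1}^m L^2((a_j,c_2n^\epsilon])$ converges to $\det(I-\mathbb K^{\rm Ai})$ on $\bigoplus_{j=1}^m L^2((a_j,\infty))$, which by \eqref{Airy_2_def} is $\P(\mathcal A(\tau_1)\le a_1,\dots,\mathcal A(\tau_m)\le a_m)$.

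This last convergence is the main obstacle, and the reason the \emph{exponential} (rather than merely locally uniform) decay of the error in Theorem \ref{thm:main} is essential: the domain $(a_j,c_2n^\epsilon]$ grows with $n$. The plan is to expand both Fredholm determinants into their series, estimate the $\ell\times\ell$ minors via Hadamard's inequality, and exploit the factor $e^{-\sigma(u+v)}$ (with $\sigma$ fixed large) simultaneously to make every integral converge uniformly in $n$, to bound the contribution of $u>c_2n^\epsilon$, and to turn the $\O(e^{-\sigma(u+v)}/n^\epsilon)$ entrywise error into an $\O(n^{-\epsilon})$ error for the full determinant. The super-exponential decay of $\mathbb K^{\rm Ai}$ then allows replacing $(a_j,c_2n^\epsilon]$ by $(a_j,\infty)$ at the cost of $\O(e^{-cn^{3\epsilon/2}})$, giving the stated expansion with error $\O(n^{-\epsilon})$. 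Finally, pointwise convergence of these joint distribution functions for all $(a_1,\dots,a_m)$ — the limit being continuous in its arguments — upgrades to weak convergence of finite-dimensional distributions, which is \eqref{conv:xi}.
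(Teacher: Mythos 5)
Your proposal for part (1) is correct but takes a genuinely different route from the paper. You compute the expected number of particles in $I_j$ via the one-point correlation $\rho^{(n)}_{t_n^{\rm Ai}(\tau_j)}(x)=K_{n,t_n^{\rm Ai}(\tau_j),t_n^{\rm Ai}(\tau_j)}(x,x)=\tilde K_{n,t_n^{\rm Ai}(\tau_j),t_n^{\rm Ai}(\tau_j)}(x,x)$ (the gauge factor indeed cancels on the diagonal), rescale, invoke Theorem \ref{thm:main}, integrate the super-exponentially decaying Airy diagonal and the exponentially decaying error term over $[c_2n^{\e'},c_2n^{\e}]$, and finish with Markov's inequality and a union bound. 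This first-moment argument is more elementary than the paper's route. The paper instead treats the gap event directly as a Fredholm determinant and compares a chain of three Fredholm determinants $\Delta(\mathcal K_1)$ (rescaled kernel), $\Delta(\mathcal K_2)$ (Airy), $\Delta(\mathcal K_3)=1$ (zero kernel) using a quantitative continuity lemma for Fredholm determinants (\cite[Lemma 3.4.5]{AGZ}) on a weighted $L^2$ space. The two approaches are both valid; the paper's has the advantage that it reuses exactly the machinery built for part (2), whereas your Markov bound is shorter and self-contained.

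For part (2) your proposal is essentially the paper's proof. You identify the event $\{c_2n^{2/3}(\xi(\tau_j)-x_n^{\rm Ai}(\tau_j))\le a_j\ \forall j\}$ with the joint gap event and pass to the Fredholm determinant, exactly as in the paper's display \eqref{Fredholm_representation}--\eqref{Fredholm_explicit}. The one methodological difference is that where you propose to redo the Fredholm comparison by hand (expanding both series, Hadamard on the minors, exploiting the $e^{-\sigma(u+v)}$ factor to control the growing window and the truncation beyond $c_2n^\e$), the paper packages these same estimates by moving to the finite measure $\hat\lambda(du)=e^{-2u}\lambda(du)$, absorbing $e^{u+u'}$ into the kernels, and quoting \cite[Lemma 3.4.5]{AGZ}. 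Your "by-hand" plan is exactly what that lemma proves, so this is a stylistic difference rather than a mathematical one; either way the exponential weight in the error of Theorem \ref{thm:main} is the indispensable input, as you correctly emphasize.
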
}
\begin{rmk}
\begin{enumerate}
	\item In particular, the distribution of $c_2n^{2/3}(\xi(\t)-x^{\Ai}_n(\t))$ converges to the Tracy-Widom distribution for any $\t\in\R$.
	\item We will not consider the question of convergence of \eqref{conv:xi} as a random function, say locally uniformly in $\t$. Instead, we refer the reader interested in this question to a number of recent articles dealing with the required tightness property of NIBM \cite{CorwinHammond,Virag1,Virag2}.
	\item Theorem \ref{corollary_TW} shows that at any finite time there is {with high probability} a mesoscopic gap of size $n^{\e-2/3}$ in the unnormalized spectrum. To our knowledge, this is the first instance of an {occurrence} of Airy fluctuations and Tracy-Widom statistics at a mesoscopic {rather than} a macroscopic gap. The bound on the size of this gap is not optimal, but chosen for convenience in the proofs of Theorems \ref{thm:main} and \ref{corollary_TW}. 
\end{enumerate}
\end{rmk}
\vspace{2em}

{Let us now consider the case $G_{\mu}''(x^*)=0$. P}rovided that $\kappa>3$, we observe limiting correlations given by the extended Pearcey kernel introduced in \cite{TracyWidom2}, which is
\begin{align*}
\mathbb K^{\rm P}_{\t_1,\t_2}(u,v):&=\frac{1}{(2\pi i)^2} \int_{i\R} d\zeta \int_{\Gamma^{\rm P}} dw ~ \frac{\exp\lb-\frac{\zeta^4}4-\frac{\t_2\zeta^2}2-v\zeta +\frac{\w^4}4+\frac{\t_1\w^2}2+u\w\rb}{\zeta-\w}\\
&-1(\t_1>\t_2)\frac{1}{\sqrt{2\pi(\t_1-\t_2)}}\exp\lb-\frac{(u-v)^2}{2(\t_1-\t_2)}\rb.
\end{align*}
Here $\Gamma^{\rm P}$ consists of four rays, two from the origin to $\pm\infty e^{-i\pi/4}$ and two from $\pm\infty e^{i\pi/4}$ to the origin. Let us parametrize 
\begin{align}\label{def:tnP}
t_{n}^{\rm P}(\t):=t_{\rm cr}+\frac{\t}{c_3^2 n^{1/2}},\quad\t\in\R
\end{align}
and set (note that $G_{\mu}'''(x^*)<0$)
$$c_3:=\frac{1}{t_{\rm cr}}\left(\frac{6}{-G_{\mu}'''(x^*)}\right)^{1/4},\quad x^{\rm P}_n(\t):=x^*+t_n^{\rm P}(\t)G_{\mu}(x^*).$$

\begin{thm}\label{thm:main_Pearcey}
	Suppose that Assumptions 1--3 hold and that $G_{\mu}''(x^*)=0$ and $\k>3$. Then, as $n\to\infty$, we have {for any $0<\e<\min\lb\frac{\k-3}{8(\k+1)},\frac1{24}\rb$ }
	\[
	\frac{1}{c_3n^{3/4}}\tilde K_{n,t_{n}^{\rm P}(\tau_1),t_{n}^{\rm P}(\t_2)}\left(x^{\rm P}_n(\t_1)+\frac{u}{c_3n^{3/4}}, x^{\rm P}_n(\t_2)+\frac{v}{c_3n^{3/4}}\right)=\mathbb K^{\rm P}_{\t_1,\t_2}(u,v)+\O(n^{-\e}).\]
	The convergence is uniform for $\tau_1,\t_2$ in any compact subset of $\mathbb R$, and uniform for $u,v\in [-M, M]$, where $M>0$ can be any positive constant.
\end{thm}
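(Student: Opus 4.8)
The plan is to carry out a classical steepest-descent analysis of the double contour integral \eqref{def:Kn} for the kernel $K_{n,s,t}$, in close parallel to the proof of Theorem \ref{thm:main}, with the cubic Airy saddle structure replaced by a quartic Pearcey structure. First I would introduce, after conjugation by the gauge factors $f_n$ from \eqref{gauge_factor}, the phase functions
\begin{align}
\phi_{n,t}(z):=\frac{1}{2t}\bigl[(z-x)^2+2tg_{\mu_n}(z)\bigr]-G_\mu(x^*)z+\frac{G_\mu(x^*)^2 t}{2},
\end{align}
(and similarly in $w$), and study the limiting phase $\phi_{t}(z)$ obtained by replacing $\mu_n$ by $\mu$. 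The defining property of the critical point $x^*$, the critical time $t_{\rm cr}=-1/G_\mu'(x^*)$, and the condition $G_\mu''(x^*)=0$ together force $\phi_{t_{\rm cr}}'(x^*)=\phi_{t_{\rm cr}}''(x^*)=\phi_{t_{\rm cr}}'''(x^*)=0$, so that $x^*$ is a degenerate saddle of order four; the leading nonvanishing coefficient is governed by $G_\mu'''(x^*)<0$, which is exactly what produces the constant $c_3$ and the scalings $n^{-1/2}$ (time) and $n^{-3/4}$ (space). The change of variables $z=x^*+\zeta/(c_3 n^{3/4})+\dots$, $w=x^*+\w/(c_3 n^{3/4})+\dots$, together with $t=t_n^{\rm P}(\t)$, should then turn the exponent into $-\zeta^4/4-\t_2\zeta^2/2-v\zeta+\w^4/4+\t_1\w^2/2+u\w$ plus lower-order corrections, reproducing the extended Pearcey integrand; the additional Gaussian term $1(s>t)(\dots)$ in \eqref{def:Kn} rescales directly to the corresponding term in $\mathbb K^{\rm P}_{\t_1,\t_2}$.

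The key steps, in order, would be: (i) deform the vertical contour $x_0+i\R$ and the loop $\Gamma$ so that near $x^*$ they follow the local steepest-descent directions of the quartic saddle — for the $z$-contour the imaginary axis direction (four rays realized as $i\R$ locally), for the $w$-contour the four Pearcey rays through the origin at angles $\pm\pi/4$ — while away from $x^*$ one checks that $\mathrm{Re}\,\phi_t$ has the right sign so the contributions are exponentially negligible; (ii) use Assumption 2 to control $g_{\mu_n}-g_\mu$ and its derivatives on the relevant contours, and Assumption 3 to guarantee that the loop $\Gamma$ can be taken at distance of order $n^{-1/(\kappa+1)}$ from $x^*$ without hitting the support of $\mu_n$, giving quantitative bounds on the phase in a shrinking neighborhood; (iii) perform the local rescaling and Taylor-expand the phase to fourth order, tracking the error terms — here the constraint $\kappa>3$ and the precise range $0<\e<\min(\tfrac{\kappa-3}{8(\kappa+1)},\tfrac1{24})$ enter, because the remainder of the density expansion \eqref{eq:psilocal} near $x^*$ contributes a term of size roughly $n^{\text{something}\cdot(\kappa-3)}$ that must be dominated by the saddle scale; (iv) estimate the tails of the rescaled integrals (the parts of the contours at macroscopic distance and the parts at distance between $n^{-1/(\kappa+1)}$ and a fixed constant) and show they give $\O(n^{-\e})$; (v) assemble the pieces and match with the double-integral representation of $\mathbb K^{\rm P}_{\t_1,\t_2}$, verifying uniformity in $\t_1,\t_2$ on compacts and in $u,v\in[-M,M]$.

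I expect the main obstacle to be step (iii)–(iv): controlling the interplay between the global geometry of the steepest-descent contours and the local quartic behavior, in particular showing that the chosen contours can be realized globally (not just locally near $x^*$) with the correct sign of $\mathrm{Re}\,\phi_t$, and quantifying the error coming from the nonsmooth/nonanalytic nature of $\psi$ near $x^*$ encoded in \eqref{eq:psilocal}. Since $\psi$ is only assumed continuous with the asymptotic \eqref{eq:psilocal} and $\kappa$ need not be an integer, $g_{\mu_n}$ and even $g_\mu$ need not extend analytically across $x^*$, so one cannot simply Taylor expand the phase; instead one must split $\mu$ (or $\mu_n$) into a part supported near $x^*$, treated by hand using \eqref{eq:psilocal}, and a part supported away from $x^*$, which is analytic near $x^*$ and can be expanded. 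Balancing the two contributions is precisely what dictates the admissible range of $\e$, and making this balance rigorous — while keeping the exponential decay in the tails uniform — is the technical heart of the argument. The rest (existence and bijectivity of $\Phi_t$, the determinantal structure, and the identification of the limiting kernel) is either quoted from the earlier part of the paper or is a routine, if lengthy, residue/contour computation analogous to the Airy case already handled in Theorem \ref{thm:main}.
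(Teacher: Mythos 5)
Your plan matches the paper's proof in all essentials: a steepest-descent analysis of the double contour integral \eqref{def:Kn} (after conjugation by the gauge factors \eqref{gauge_factor}), a degenerate quartic saddle at $x^*$ forced by the vanishing of the first three derivatives of the phase when $G_\mu''(x^*)=0$, the Pearcey scalings $n^{-1/2}$ in time and $n^{-3/4}$ in space dictated by $G_\mu'''(x^*)<0$, contours realizing the local Pearcey steepest-descent geometry near $x^*$ and following the graph of $y_{t_n(\tau),\mu_n}$ globally, and Assumptions 2--3 feeding a Taylor-expansion-with-error estimate for the phase in a shrinking window (Proposition \ref{prop: Stieltjes comparison 2}), with the tails controlled by the sign of the real part of the phase on those contours. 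One small correction to the remark at the end of your proposal: $g_{\mu_n}$ \emph{does} extend analytically across $x^*$, in a disk of radius of order $n^{-1/(\kappa+1)}$, precisely because Assumption 3 forbids initial particles there; it is this analyticity that allows the paper to Taylor expand $G_{\mu_n}$ around $x^*$ directly, and the splitting you describe (restricting $\mu$ to the complement of a shrinking interval about $x^*$) appears in Proposition \ref{prop: Stieltjes comparison 2} only as a device to identify the Taylor coefficients of $G_{\mu_n}$ with $G_\mu^{(j)}(x^*)$ up to $\mathcal O(n^{-3/4-2\epsilon})$ errors, via Assumption 2 for the discrete-versus-continuous comparison.
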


{ The extended Pearcey kernel  {is supposed to generate} a space-time determinantal point process called Pearcey process{, which does generically not have a smallest or largest path.}

Let us conclude this introduction with a discussion of our results in the context of the existing literature. Briefly speaking, the occurrence of the Airy line ensemble for the largest eigenvalues of NIBM {(and of its variants  like Ornstein-Uhlenbeck processes \cite{Dyson} or Brownian bridges)} has been common knowledge for quite some time.  To our knowledge, convergence of NIBM to the extended Airy kernel  was studied first in \cite{FNH} for a single time. Convergence of the finite-dimensional distributions (multi-time) of the top path was stated in \cite{Johansson03} without a proof, the process being started in $n$ random GUE points. For half of the particles starting at $-a$ and half at $a$, single time correlations have been studied in \cite{BK1,ABK2,BK3}, Airy correlations {at the edges being proved in \cite{BK1,ABK2} for $a<1$ and $a>1$, respectively, and Pearcey fluctuations at the cusp in \cite{BK3} for $a=1$}. A proof of the convergence in law to the multi-line version of the Airy$_2$ process, there coined Airy line ensemble, has been  {given} in \cite{CorwinHammond} for the initial condition of all particles starting at 0. For a large class of initial deterministic or random conditions, the Airy point process was found in \cite{Shcherbinaedge} for the single-time fluctuations around the outmost edges of the limit $\mu$. {A study for single-time correlations (via the deformed GUE \eqref{deformed_GUE}) has been provided in \cite{CapitainePeche}; it has been shown in \cite{CapitainePeche} that Airy point process statistics appear whenever the evolved density $\psi_t$ has an edge with a square root decay on one side and a macroscopic gap on the other side{, a result which was extended in \cite{AEKS} to deformed Wigner matrices}. Recall that in the present paper, we observe the multi-time correlations of the Airy line ensemble around a mesoscopic instead of a macroscopic gap, and we obtain them under assumptions on the initial density rather then the evolved density.}

Statistics of the Airy line ensemble are universal for a large class of random matrix ensembles and related models \cite{Soshnikov,PasturShcherbina03,BEY,EYbook,KSSV,KV} as well as many random growth models and discrete interacting particle systems belonging to the KPZ universality class{, see e.g. \cite{Corwinsurvey,SpohnLN,KK} and references therein.} {Let us also mention studies of discrete analogs of NIBM, see e.g.~\cite{BorodinKuan,Johansson08,Petrov,DJM,DuseMetcalfe,GorinPetrov} and references therein.}\\

Pearcey statistics have also been  {found} in numerous publications in random matrix theory. Let us mention \cite{BrezinHikami,TracyWidom2,BK3,LiechtyWang1,LiechtyWang2,GeudensZhang,CapitainePeche,Erdosetal}. Our results on the Pearcey correlations should in particular be compared to those of \cite{CapitainePeche} {and \cite{Erdosetal}}. In \cite{CapitainePeche}, single-time Pearcey kernel statistics are shown to occur if the evolved density $\psi_t$ has an isolated zero at some point $x^*$ and time $t=1$ and moreover the following condition holds: there is a complex neighborhood $B$ of $x^*$ such that the equation 
\begin{align}
\int\frac{d\mu_n(s)}{(z-s)^2}=1 \label{condition_CP}
\end{align} 
has a unique solution $z\in B$ for $n$ sufficiently large. As $\mu_n$ is discrete, this condition may be written as a polynomial equation $P(z)=0$ with a polynomial $P$ with real coefficients of degree $2n$. 
Since the non-real solutions of this equation come in complex conjugate pairs, the unique zero $z\in B$ of \eqref{condition_CP} must be real. Moreover, if we assume for simplicity that the support of $\mu_n$ is contained in the support of its weak limit $\mu$ as $n\to\infty$, then $\int\frac{d\mu_n(s)}{(z-s)^2}$ converges to $\int\frac{d\mu(s)}{(z-s)^2}$ for $z$ outside any complex compact set containing a neighborhood of the support of $\mu$. Hence, all the solutions of \eqref{condition_CP} will converge either to one of the finitely many solutions of the equation $\int\frac{d\mu(s)}{(z-s)^2}=1$ outside the support of $\mu$, or to the support of $\mu$. As $n\to\infty$, we thus have a growing number of solutions to \eqref{condition_CP} converging to the support of $\mu$.
This discussion makes us believe that the condition of having a fixed neighborhood $B$ of $x^*$ where there is only (and exactly) one solution for large $n$ is rather restrictive, and although this is not so easy to verify even in simple concrete situations, it makes us believe that the condition of \cite{CapitainePeche} is typically not satisfied under our assumptions.

{Two other deep studies of Pearcey universality are \cite{Erdosetal} and \cite{CEKS}. In \cite{Erdosetal}, the authors investigate deformed Hermitian Wigner matrices}, including the deformed GUE. {In \cite{CEKS}, deformed real-symmetric Wigner matrices are treated, including the deformed Gaussian orthogonal ensemble (GOE), for which also space-time correlations are considered.}   The setting of \cite{Erdosetal} is quite general but specialized to the deformed GUE, relevant for Pearcey universality is, apart from the evolved density showing a cusp-like behavior, an abstract condition on boundedness of a vector-valued solution to a certain system of non-linear equations {(called Dyson equations, cf.~\cite{AEK,AEK2})}, uniform in a complex neighborhood of the critical point. This vector-valued function $m=(m_1,\dots,m_n)$ is closely related to the Stieltjes transform of the evolved density.
For a concrete quantitative condition for the uniform boundedness of $m_i$, $i=1,\dots,n$, \cite{Erdosetal} refers to \cite{AEK}. Adapted to our setting, \cite[Theorem 6.4, Lemma 6.7]{AEK} provides the required boundedness (and thus Pearcey universality) under the condition that the function $Q:[0,\infty)\to[0,\infty)$,
	\begin{align}
	Q(q)^2:=\min_{1\leq i\leq n}\frac1n\sum_{j=1}^n\frac1{(q^{-1}+\lv X_i(0)-X_j(0)\rv)^2}\label{def_Q}
	\end{align}
	satisfies $\lim_{q\to\infty}Q(q)>\max(5/t_{\rm cr},1)$ for $n$ large enough. This is however not the case in our situation, as the r.h.s.~of \eqref{def_Q} should for $X_i(0)$ close to $x^*$ and $n,q\to\infty$ converge to $\int(x^*-s)^{-2}d\mu(s)=1/t_{\rm cr}$.
	The deeper reason for the non-applicability of the condition from \cite{AEK} is that this condition works well if the $X_j(0)$'s have neighbors at distances $\O(n^{-1/2})$, but in our case, there are gaps in the initial spectrum of order  $n^{-\frac{1}{\k+1}}$ and $\k>2$.\\

{The remainder of the paper is organized as follows. We will prove Theorems \ref{thm:main} and \ref{thm:main_Pearcey} using a saddle point analyis of the kernel $\tilde{K}_{n,s,t}$. While this approach to the asymptotics of NIBM goes back already to \cite{Johansson01}, it has not been applied in the situation of a {strongly vanishing initial density}.
	An important new ingredient to this analysis is an expansion of the Stieltjes transform $G_{\mu_n}$ on the real line, uniformly in a mesoscopic neighborhood of the initial critical point $x^*$. This expansion is obtained in Section 2.}  Section 3 contains the {saddle point analysis} of the Airy case, including the proofs of Theorems \ref{thm:main} and \ref{corollary_TW}. The Pearcey case will then be studied in Section 4, where in particular Theorem \ref{thm:main_Pearcey} is proven. {Finally, definition and determinantal representation of the space-time correlation functions are given in Appendix A.}\\

\textbf{Acknowledgements:} The authors would like to thank Mireille Capitaine, {Giorgio Cipolloni,} Makoto Katori, Torben Kr\"uger, Sandrine P\'ech\'e, {Dominik Schr\"oder} {and Dong Wang} for valuable discussions. T.C. was supported by the
Fonds de la Recherche Scientifique-FNRS under EOS project O013018F. T.N.~and M.V.~have been supported by the DFG through the CRC 1283 ``Taming uncertainty and profiting from
randomness and low regularity in analysis, stochastics and their applications''.

\section{{Expansions of the Stieltjes Transform at the Critical Point}}
{The asymptotic analysis of \eqref{def:Kn} needs precise information about the large $n$ behavior of the function $g_{\mu_n}$ around the point $x^*$. In this preparatory section, we provide an expansion for the Stieltjes transform $G_{\mu_n}$ (i.e.\ the derivative of $g_{\mu_n}$, see \eqref{def:Stieltjes})  around the critical point $x^*$.

{To state it, define
\begin{equation}\label{def:G0G1G2}G_j:=G_\mu^{(j)}(x^*)=(-1)^{j}j!\int\frac{d\mu(s)}{(x^*-s)^{j+1}},\quad\mbox{for $j=0,1,2,3$.}\end{equation}}
\begin{prop}\label{prop: Stieltjes comparison 2}\leavevmode
	\begin{enumerate}
	\item 
	Suppose that Assumptions 1--3 hold, and that $\kappa>2$. Then we have {for any $0<\e<\min\lb\frac{\k-2}{6(\k+1)},\frac1{15}\rb$} uniformly with respect to $|z-x^*|=\mathcal{O}\left(n^{-\frac{1}{3}+\epsilon}\right)$ 
	\begin{equation} \label{eq:expansionG_mun2}
	G_{\mu_n}(z)=G_0+G_1(z-x^*)+\frac{G_2}{2}(z-x^*)^2+\O(n^{-2/3-2\e}),
	\end{equation} as $n\to\infty$.
	Moreover, we have uniformly for $|z-x^*| =\mathcal{O}\left(n^{-\frac{1}{3}+\epsilon}\right)$
	\begin{equation}\label{eq:lemma extension2}
	G_{\mu_n}''(z)=G_2+\O(n^{-\e}),
	\end{equation}
	as $n\to\infty$.
	\item Suppose that Assumptions 1--3 hold, and that $\kappa>3$. Then we have {for any $0<\e<\min\lb\frac{\k-3}{8(\k+1)},\frac1{24}\rb$} uniformly with respect to $|z-x^*|=\mathcal{O}\left(n^{-\frac{1}{4}+\epsilon}\right)$, 
	\begin{equation} \label{eq:expansionG_mun}
	G_{\mu_n}(z)=G_0+G_1(z-x^*)+\frac{G_2}{2}(z-x^*)^2+\frac{G_3}{6}(z-x^*)^3+\O(n^{-3/4-2\e}),
	\end{equation} as $n\to\infty$.	
	Moreover, we have uniformly for $|z-x^*| =\mathcal{O}\left(n^{-\frac{1}{4}+\epsilon}\right)$
	\begin{equation}\label{eq:lemma extension}
	G_{\mu_n}''(z)=G_2+\O(n^{-\e}),\qquad G_{\mu_n}'''(z)=G_3+\O(n^{-\e}),
	\end{equation}
	as $n\to\infty$.
	\end{enumerate}
\end{prop}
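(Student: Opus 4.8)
The plan is to prove both parts at once, writing $k=2$ in part (1) and $k=3$ in part (2), and comparing $G_{\mu_n}(z)$ with the Taylor polynomial of $G_\mu$ at $x^*$, namely $P_k(z):=\sum_{j=0}^{k}\frac{G_j}{j!}(z-x^*)^j$. I would start from the elementary identity
\[
\frac1{z-s}=\sum_{j=0}^{k}\frac{(-1)^j(z-x^*)^j}{(x^*-s)^{j+1}}+R_k(z,s),\qquad R_k(z,s):=\frac{(-1)^{k+1}(z-x^*)^{k+1}}{(x^*-s)^{k+1}(z-s)}.
\]
Since $G_j/j!=(-1)^j\int(x^*-s)^{-j-1}d\mu(s)$, these integrals converging absolutely exactly because $\kappa>k$ — which is where the hypotheses $\kappa>2$, resp.\ $\kappa>3$, are used — writing $Q_k(s)$ for the finite sum above one gets $G_{\mu_n}(z)-P_k(z)=\int(z-s)^{-1}d\mu_n(s)-\int Q_k(s)\,d\mu(s)$.

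Next I would cut out the gap $J_n:=[x^*-\delta_n,x^*+\delta_n]$ with $\delta_n:=mn^{-1/(\kappa+1)}$, which by Assumption 3 carries no $\mu_n$-mass and which, in the relevant range $|z-x^*|=O(n^{-1/3+\epsilon})$ (resp.\ $O(n^{-1/4+\epsilon})$), satisfies $|z-x^*|\ll\delta_n$, hence $\operatorname{dist}(z,J_n^c)\ge\delta_n/2$ and $|z-s|\ge\tfrac12|x^*-s|$ on $J_n^c$. Using $\mu_n(J_n)=0$, the difference splits as
\[
G_{\mu_n}(z)-P_k(z)=\underbrace{\int_{J_n^c}Q_k\,d(\mu_n-\mu)}_{(\mathrm I)}+\underbrace{\int_{J_n^c}R_k(z,\cdot)\,d\mu_n}_{(\mathrm{II})}-\underbrace{\int_{J_n}Q_k\,d\mu}_{(\mathrm{III})},
\]
and each piece is controlled by one of the three assumptions. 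For $(\mathrm{III})$ one uses only $\psi(s)\lesssim|s-x^*|^{\kappa}$ near $x^*$ (Assumption 1) together with $\kappa>k$ to bound it by $\sum_{j=0}^{k}|z-x^*|^j\delta_n^{\kappa-j}$. For $(\mathrm I)$ one integrates by parts on each component of $J_n^c$, turning it into an integral of $(F_n-F)\,Q_k'$ (the boundary terms vanishing because $Q_k\to0$ at infinity); with $|F_n-F|\le M/n$ (Assumption 2) and $\int_{J_n^c}|Q_k'|\lesssim\delta_n^{-1}$ this gives $(\mathrm I)=O(n^{-1}\delta_n^{-1})$. For $(\mathrm{II})$ one bounds $|R_k(z,s)|\lesssim|z-x^*|^{k+1}|x^*-s|^{-k-2}$ on $J_n^c$ and, comparing $\int_{J_n^c}|x^*-s|^{-k-2}d\mu_n$ with its $\mu$-analogue by a further integration by parts, obtains $(\mathrm{II})=O\big(|z-x^*|^{k+1}n^{(k+1-\kappa)_+/(\kappa+1)}\big)$ up to a logarithmic factor.

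Substituting $\delta_n=mn^{-1/(\kappa+1)}$ and the bounds on $|z-x^*|$ and simplifying, one checks that every term is $O(n^{-2/3-2\epsilon})$ in part (1) and $O(n^{-3/4-2\epsilon})$ in part (2); this is exactly what pins down the admissible range of $\epsilon$. The leading term $\delta_n^{\kappa}$ of $(\mathrm{III})$ and the term $(\mathrm I)$, both of order $n^{-\kappa/(\kappa+1)}$, require $\tfrac23+2\epsilon<\tfrac{\kappa}{\kappa+1}$, i.e.\ $\epsilon<\frac{\kappa-2}{6(\kappa+1)}$ (resp.\ $\tfrac34+2\epsilon<\tfrac{\kappa}{\kappa+1}$, i.e.\ $\epsilon<\frac{\kappa-3}{8(\kappa+1)}$), while $(\mathrm{II})$ requires $\epsilon<\tfrac1{15}$ (resp.\ $\epsilon<\tfrac1{24}$) once $\kappa$ is large; the remaining sub-terms give weaker constraints, and the logarithms are absorbed by the strict inequalities. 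Finally, the estimates for $G_{\mu_n}''$ and $G_{\mu_n}'''$ follow from the expansions just established via Cauchy's formula $G_{\mu_n}^{(\ell)}(z)=\frac{\ell!}{2\pi i}\oint_{|\zeta-z|=n^{-1/3}}(\zeta-z)^{-\ell-1}G_{\mu_n}(\zeta)\,d\zeta$ (radius $n^{-1/4}$ in part (2)), a circle that stays both in the region of validity of the expansion and in the analyticity domain of $G_{\mu_n}$ (since $n^{-1/3+\epsilon}\ll\delta_n$): the polynomial part reproduces $P_k^{(\ell)}$, equal to $G_2$ for $\ell=2$ and to $G_3$ for $\ell=3$ up to $O(n^{-\epsilon})$-small lower-order terms, and the remainder contributes $O(n^{-2/3-2\epsilon}n^{\ell/3})=O(n^{-\epsilon})$ (resp.\ $O(n^{-3/4-2\epsilon}n^{\ell/4})=O(n^{-\epsilon})$).

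I expect the main obstacle to be the bookkeeping around $x^*$. Because $z$ may be real and sit inside the support of $\mu$, $G_{\mu_n}$ and $G_\mu$ cannot be compared directly there; the gap $J_n$ is precisely what allows two different mechanisms to take over — on $J_n$ the strong vanishing of $\psi$ (Assumption 1) together with the absence of initial particles (Assumption 3), and on $J_n^c$ the smoothness and boundedness of $s\mapsto(z-s)^{-1}$, which makes the rigidity bound (Assumption 2) usable through integration by parts. Arranging the decomposition so that every integral is absolutely convergent and so that all three error contributions land at exactly the claimed order — which is what forces the precise exponents in the hypotheses on $\epsilon$ — is the delicate part; the individual estimates themselves are elementary.
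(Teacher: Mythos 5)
Your proposal is correct and in the same overall spirit as the paper's proof (cut out a gap around $x^*$, use the vanishing density near $x^*$ for the excised piece, and use the rigidity $|F_n-F|\le M/n$ via integration by parts outside), but the packaging is genuinely different in two places. First, you excise the \emph{minimal} gap $J_n$ of radius $\delta_n=mn^{-1/(\kappa+1)}$ given by Assumption~3, whereas the paper excises a \emph{larger} interval $I_n$ of radius $n^{-1/4+2\epsilon}$ (resp.\ $n^{-1/3+2\epsilon}$), chosen precisely so that the $\mu_n$-vs-$\tilde\mu$ comparison lands exactly at the target $\mathcal O(n^{-3/4-2\epsilon})$ (resp.\ $\mathcal O(n^{-2/3-2\epsilon})$); in exchange the paper must check that $I_n\subset$ the Assumption-3 gap, which is where the constraint $\epsilon<\frac{\kappa-3}{8(\kappa+1)}$ (resp.\ $\frac{\kappa-2}{6(\kappa+1)}$) enters for them, while for you it enters through the bound $n^{-1}\delta_n^{-1}=n^{-\kappa/(\kappa+1)}$ being $o(n^{-3/4-2\epsilon})$. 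Both routes require the same $\epsilon$-range, and yours gives a marginally sharper intermediate error. Second, and more substantially, you obtain the derivative estimates \eqref{eq:lemma extension2}--\eqref{eq:lemma extension} by Cauchy's formula on a circle of radius $n^{-1/3}$ (resp.\ $n^{-1/4}$) that stays inside both the expansion region and the analyticity domain; the paper instead re-runs the whole integration-by-parts comparison at the level of $G_{\mu_n}'''$, with the kernel $|z-s|^{-5}$. Your Cauchy-formula argument is more economical since it reuses the already-proved function-level expansion, and it gives the slightly stronger remainder $\mathcal O(n^{-2\epsilon})$.

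One small inaccuracy: in estimating $(\mathrm I)$ you say the boundary terms of the integration by parts vanish because $Q_k\to0$ at infinity, but the integration by parts on $J_n^c$ also produces boundary terms at $x^*\pm\delta_n$ of size $\mathcal O(n^{-1}\delta_n^{-1})$ (since $|Q_k(x^*\pm\delta_n)|\asymp\delta_n^{-1}$ and $|F_n-F|\le M/n$); these do not vanish, they are merely of the same order as the integral term. The conclusion $(\mathrm I)=\mathcal O(n^{-1}\delta_n^{-1})$ is unaffected, but the parenthetical remark should be corrected.
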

\begin{proof}We will first prove the second part of the proposition in detail, afterwards we indicate briefly how the first part can be proved similarly.

 Let
	\begin{align}
	0<\e<\min\lb\frac{\k-3}{8(\k+1)},\frac1{24}\rb,
	\end{align}
	a choice that will become clear in the course of the proof. We first write $G_{\mu_n}(z)$ as
	\[
	G_{\mu_n}(z)=\int\frac{1}{z-s}d\mu_n(s)
	=-\int_{\mathbb R}\frac{F_n(s)}{(z-s)^2}ds,
	\]
	where $F_n$ is as before the distribution function of $\mu_n$.
	Since $0<\e<\frac{\k-3}{8(\k+1)}$, we have with Assumption 3 that $I_n:=[x^*-n^{-\frac{1}{4}+2\epsilon}, x^*+n^{-\frac{1}{4}+2\epsilon}]$ does not intersect with the support of $\mu_n$ for $n$ large enough, hence \(F_n\) is constant on $I_n$ and we can write
	\begin{align*}
	G_{\mu_n}(z)
	&=-\int_{\mathbb R\setminus I_n}\frac{F_n(s)}{(z-s)^2}ds +\frac{F_n(x^*-n^{-\frac{1}{4}+2\epsilon})}{z-x^*+n^{-\frac{1}{4}+2\epsilon}}
	-\frac{F_n(x^*+n^{-\frac{1}{4}+2\epsilon})}{z-x^*-n^{-\frac{1}{4}+2\epsilon}}.
	\end{align*}

	Next, let $\widetilde\mu$ be the restriction of the measure $\mu$ to $\mathbb R\setminus I_n$, i.e.~the measure with density $\psi\cdot1_{\R\setminus I_n}$.
	Then we have, $F$ being the distribution function of $\mu$,
	\begin{align*}
	G_{\widetilde\mu}(z)&=\int_{-\infty}^{x^*-n^{-\frac{1}{4}+2\epsilon}}\frac{d\mu(s)}{z-s}
	+\int_{x^*+n^{-\frac{1}{4}+2\epsilon}}^{+\infty}\frac{d\mu(s)}{z-s}\\
	&=-\int_{-\infty}^{x^*-n^{-\frac{1}{4}+2\epsilon}}\frac{F(s)ds}{(z-s)^2}
	-\int_{x^*+n^{-\frac{1}{4}+2\epsilon}}^{+\infty}\frac{F(s)ds}{(z-s)^2}\\&\qquad +\frac{F(x^*-n^{-\frac{1}{4}+2\epsilon})}{z-x^*+n^{-\frac{1}{4}+2\epsilon}}
	-\frac{F(x^*+n^{-\frac{1}{4}+2\epsilon})}{z-x^*-n^{-\frac{1}{4}+2\epsilon}}.
	\end{align*}
	Using Assumption 2, it follows that uniformly for $|z-x^*|\leq Cn^{-\frac{1}{4}+\epsilon}$ for any fixed $C>0$
	\begin{align*}\left|G_{\mu_n}(z)-G_{\widetilde\mu}(z)\right|\leq &\frac{M}{n}\int_{\mathbb R\setminus I_n}\frac{ds}{\vert z-s \vert^2}\\
	&+\frac{M}{n}\frac{1}{\vert z-x^{*}-n^{-\frac{1}{4}+2\epsilon}\vert}+\frac{M}{n}\frac{1}{\vert z-x^{*}+n^{-\frac{1}{4}+2\epsilon}\vert}\\
	&=\mathcal O(n^{-\frac{3}{4}-2\epsilon}),
	\end{align*}
	as $n\to\infty$, $M$ being the constant from Assumption 2.

	Thus, it remains to show, with the same uniformity in $z$, that 
	\begin{equation}\label{eq:expansionG}
	G_{\widetilde\mu}(z)=G_0+G_1(z-x^*)+\frac{G_2}{2}(z-x^*)^2+\frac{G_3}{6}(z-x^*)^3+\mathcal O(n^{-\frac{3}{4}-2\epsilon}),
	\end{equation}
	as $n\to\infty$.
	As $G_{\widetilde\mu}$ is analytic in $|z-x^*|\leq Cn^{-\frac{1}{4}+\epsilon}$ for $n$ large enough, we have
	\begin{align}\label{G_mu_tilde}
	G_{\widetilde\mu}(z)=&G_{\widetilde\mu}(x^*)+G_{\widetilde\mu}'(x^*)(z-x^*)+\frac{1}{2}G_{\widetilde\mu}''(x^*)(z-x^*)^2\\
	&+\int_{x^*}^{z}\int_{x^*}^\xi \int_{x^*}^\zeta G_{\widetilde\mu}'''(\eta)d\eta d\zeta d\xi.
	\end{align}
	Moreover, using the definition of $\widetilde\mu$, it is straightforward to see that 
	\begin{align*}
	&G_{\widetilde\mu}(x^*)=G_0+\mathcal O(n^{-\frac{\kappa}{4}+2\kappa\epsilon}),\\
	&G_{\widetilde\mu}'(x^*)=G_1+\mathcal O(n^{-\frac{\kappa-1}{4}+2(\kappa-1)\epsilon}),\\
	&G_{\widetilde\mu}''(x^*)=G_2+\mathcal O(n^{-\frac{\kappa-2}{4}+2(\kappa-2)\epsilon}),
\end{align*} 
	as $n\to\infty$. To deal with the remainder in \eqref{G_mu_tilde}, we write
\begin{align}
G_{\widetilde\mu}'''(\eta)-G_3=-6\int\limits_{\mathbb{R}\backslash I_n} \left(\frac{1}{(\eta -s)^4}-\frac{1}{(x^* -s)^4}\right) d\mu(s)+6 \int\limits_{I_n} \frac{1}{(x^* -s)^4} d\mu(s).\label{G_mu_remainder}
\end{align}
The last integral in \eqref{G_mu_remainder} is of order \(\mathcal O \left(n^{(\kappa -3) (-\frac{1}{4} +\epsilon)}\right)\) and for the first integral it follows from a computation that we have
\[\int\limits_{\mathbb{R}\backslash I_n} \left(\frac{1}{(\eta -s)^4}-\frac{1}{(x^* -s)^4}\right) d\mu(s) = \begin{cases}
\mathcal O \left(n^{(\kappa -3) (-\frac{1}{4} +\epsilon)}\right), & 3<\kappa <4, \\
\mathcal O \left(n^{-\frac{1}{4} +\epsilon} \log n \right), & \kappa =4,\\
\mathcal O \left(n^{-\frac{1}{4} +\epsilon}\right), & \kappa >4,
\end{cases}\]
as \(n\to\infty\), uniformly in \(\vert \eta -x^* \vert \leq Cn^{-\frac{1}{4}+\epsilon }\).
	This yields, using $\e<\frac1{24}$,
	\begin{align}
		&\int_{x^*}^{z}\int_{x^*}^\xi \int_{x^*}^\zeta G_{\widetilde\mu}'''(\eta)d\eta d\zeta d\xi =\frac{G_3}{6}\left(z-x^*\right)^3+\mathcal O(n^{-\frac{3}{4}-2\epsilon}).
	\end{align} 
	 With this, \eqref{eq:expansionG} follows.

	In order to see \eqref{eq:lemma extension} for \(\vert z-x^{*}\vert \leq C n^{-\frac{1}{4}+\epsilon}\), we write
	\[G_{\mu_n}'''(z)-G_{\mu}'''(x^*)=G_{\mu_n}'''(z)-G_{\widetilde\mu}'''(z) +G_{\widetilde\mu}'''(z)-G_3.\]
	It follows from the above arguments that $G'''_{\tilde\mu}(z)-G_3=\O(n^{-\e})$,
	so it remains to show for \(\vert z-x^{*}\vert \leq C n^{-\frac{1}{4}+\epsilon}\)
	\[G_{\mu_n}'''(z)-G_{\widetilde\mu}'''(z) =\mathcal O \left(n^{-\e}\right),\quad n\to \infty.\]
	To see this, we write
	\[G_{\mu_n}'''(z) = 24\int\limits_{\mathbb{R}\backslash I_n} \frac{F_n(s)}{(z-s)^5}ds+6 \frac{F_n(x^{*}+n^{-\frac{1}{4}+2\epsilon})}{(z-x^{*}-n^{-\frac{1}{4}+2\epsilon})^4}-6 \frac{F_n(x^{*}-n^{-\frac{1}{4}+2\epsilon})}{(z-x^{*}+n^{-\frac{1}{4}+2\epsilon})^4},\]
	and
	\[G_{\widetilde\mu}'''(z) = 24\int\limits_{\mathbb{R}\backslash I_n} \frac{F(s)}{(z-s)^5}ds+6 \frac{F(x^{*}+n^{-\frac{1}{3}+2\epsilon})}{(z-x^{*}-n^{-\frac{1}{4}+2\epsilon})^4}-6 \frac{F(x^{*}-n^{-\frac{1}{4}+2\epsilon})}{(z-x^{*}+n^{-\frac{1}{4}+2\epsilon})^4}.\]
	Using now Assumption 2 as above, we find
	\begin{align*}\vert G_{\mu_n}'''(z)-G_{\widetilde\mu}'''(z)\vert &\leq \frac{24M}{n} \int\limits_{\mathbb{R}\backslash I_n} \frac{ds}{\vert z-s \vert^5}\\
	 &+\frac{6M}{n} \frac{1}{\vert z-x^{*} -n^{-\frac{1}{4}+2\epsilon} \vert^4}+\frac{6M}{n} \frac{1}{\vert z-x^{*} +n^{-\frac{1}{4}+2\epsilon} \vert^4}=\O(n^{-8\e}),
	\end{align*}
	 uniformly for \(\vert z-x^{*}\vert \leq C n^{-\frac{1}{4}+\epsilon}\). Similar arguments yield the estimate for the second derivative in \eqref{eq:lemma extension}. This proves part (2) of the proposition.
	
	 For part (1) with $\kappa>2$, we proceed in a similar way as before, but now we have only a smaller interval $I_n:=[x^*-n^{-\frac{1}{3}+2\epsilon}, x^*+n^{-\frac{1}{3}+2\epsilon}]$ which does not intersect with the support of $\mu_n$ for small $\epsilon$. With this modification, it suffices to go through the same estimates as before to prove the results, where now
	\begin{align}
	0<\e<\min\lb\frac{\k-2}{6(\k+1)},\frac1{15}\rb.
	\end{align}

\end{proof}

\section{The Airy Case: Proof of Theorems \ref{thm:main} and \ref{corollary_TW}}\label{sec_Airy}
{Throughout this section, $\e$ can be any number satisfying $0<\e<\min\lb\frac{\k-2}{6(\k+1)},\frac1{15}\rb$.}
\subsection{Analysis of the Saddle Points}
	In view of \eqref{def:Kn}, for the proof of Theorem \ref{thm:main} we want to determine the asymptotic location of the saddle points of the functions $z\mapsto\phi_{n,\t_2}(z,v)$ and $w\mapsto-\phi_{n,\t_1}(w,u)$, where
	\begin{equation}
	\phi_{n,\t}(z,v):= \frac{n}{2t_n^\Ai(\t)} \left(z-x_n^{\Ai}(\t)-\frac{v}{c_2 n^{2/3}}\right)^2 + ng_{\mu_n}(z),\label{def:phi}
	\end{equation}
	 {and} we recall that $t_n^\Ai(\t)$ was defined in \eqref{def:tn}.
	 In the following of this section, for convenience in notation we will drop the superscript $\Ai$, simply writing $t_n(\t)$ and $x_n(\t)$ instead of \(t_n^\Ai(\t)\) and \(x_n^{\Ai}(\t)\), respectively.
	The saddle points in the upper half plane are given by
	\begin{align}
	&z_n := z_n(\t_2,v):=F_{t_n(\t_2),\mu_n}\left(x_n(\t_2)+\frac{v}{c_2 n^{2/3}}\right),\label{def:znwn1}\\
	&w_n :=w_n(\t_1,u):= F_{t_n(\t_1),\mu_n}\left(x_n(\t_1)+\frac{u}{c_2 n^{2/3}}\right), \label{def:znwn}
	\end{align}
	where the function \(F_{t_n(\t),\mu_n}\) denotes the inverse function of \(H_{t_n(\t),\mu_n}\) (see \eqref{def:Stieltjes} for its definition), mapping the real line to the graph of $y_{t_n(\t),\mu_n}$ (cf.~\cite{Biane}). 
		Indeed, any critical point $z$ of $\phi_{n,\tau}(z,v)$ satisfies
	\begin{align}
	z-x_n(\t)-\frac{v}{c_2 n^{2/3}}+t_n(\tau)G_{\mu_n}(z)=0,
	\end{align}
	or in other terms,
	\begin{align}
	H_{t_n(\t),\mu_n}(z)=x_n(\t)+\frac{v}{c_2 n^{2/3}},
	\end{align}
	so that inverting yields \eqref{def:znwn1}.
 For a more detailed discussion of the saddle points we refer to \cite[page 13]{ClaeysNeuschelVenker}, where  {convergence to} the sine kernel has been considered.\\
	Due to the rather complicated $n$-dependence of the saddle points, we will choose integration contours passing in a vicinity of the saddle points instead of going through these points exactly. This requires some finer knowledge on the location of the saddle points.
\begin{lemma}\label{lemma: saddle points}
	Suppose that Assumptions 1--3 hold, $G_2>0$ and $\kappa>2$. Then the following holds for $n\to\infty$.
	\begin{enumerate}
		\item \label{expansionH}We have uniformly for  $|z-x^*|= \mathcal{O}\left(n^{-\frac{1}{3}+\epsilon}\right)$ and $\t$ in compacts \[H_{t_n(\t),\mu_n}(z)=x_n(\t)+\frac{t_n(\t)G_2}{2} (z-x^*)^2 +\mathcal O\left(n^{-\frac{2}{3}+\epsilon}\right).\] 
		\item \label{expansionsaddles}We have	\begin{align*} 	z_n(\t_2,v)&=x^{*}+\mathcal{O}\left(n^{-\frac13+\frac\epsilon2}\right), \\
		w_n(\t_1,u)&= x^{*}+\mathcal{O}\left(n^{-\frac13+\frac\epsilon2}\right),
		\end{align*}
		where the  implied constants are uniform as $n\to\infty$ for \(\vert u\vert,\lv v\rv\leq K n^{\epsilon}\) for any constant \(K >0\) and $\t_1,\t_2$ in compacts.
	\end{enumerate}
\end{lemma}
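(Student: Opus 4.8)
The plan is to deduce both statements from the expansion of the Stieltjes transform in Proposition \ref{prop: Stieltjes comparison 2}(1), combined with the definition \eqref{def:Stieltjes} of $H_{t,\mu_n}$ and the parametrizations \eqref{def:tn}, \eqref{def:xnAiry}. For part (1), I would start from $H_{t_n(\t),\mu_n}(z)=z+t_n(\t)G_{\mu_n}(z)$ and substitute \eqref{eq:expansionG_mun2}, which gives $G_{\mu_n}(z)=G_0+G_1(z-x^*)+\frac{G_2}{2}(z-x^*)^2+\O(n^{-2/3-2\e})$ uniformly for $|z-x^*|=\O(n^{-1/3+\e})$. Writing $z=x^*+(z-x^*)$ and collecting terms, the constant and linear pieces are $x^*+t_n(\t)G_0 + (1+t_n(\t)G_1)(z-x^*)$. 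Now $x^*+t_n(\t)G_0 = x^*+t_n(\t)G_\mu(x^*) = x_n(\t)$ by \eqref{def:xnAiry}, so the constant term is exactly $x_n(\t)$. The key cancellation is the linear term: since $t_{\rm cr}=-1/G_\mu'(x^*)=-1/G_1$ by \eqref{def:t_cr}, we have $1+t_{\rm cr}G_1=0$, and because $t_n(\t)=t_{\rm cr}+2\t/(c_2^2n^{1/3})$ differs from $t_{\rm cr}$ by $\O(n^{-1/3})$, the coefficient $1+t_n(\t)G_1 = \O(n^{-1/3})$; multiplied by $(z-x^*)=\O(n^{-1/3+\e})$ this contributes $\O(n^{-2/3+\e})$. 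The quadratic term is $\frac{t_n(\t)G_2}{2}(z-x^*)^2$, which I keep, and the error term $t_n(\t)\O(n^{-2/3-2\e})=\O(n^{-2/3-2\e})$ is absorbed. This yields exactly the claimed expansion with error $\O(n^{-2/3+\e})$.

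For part (2), I would invert the relation from part (1). By definition \eqref{def:znwn1}, $z_n=z_n(\t_2,v)$ satisfies $H_{t_n(\t_2),\mu_n}(z_n)=x_n(\t_2)+\frac{v}{c_2n^{2/3}}$. I first need to know a priori that $z_n-x^*=\O(n^{-1/3+\e})$ so that part (1) applies at $z=z_n$; this follows from general properties of $F_{t,\mu_n}=H_{t,\mu_n}^{-1}$ (monotonicity/Lipschitz bounds of $H_{t_n(\t),\mu_n}$ away from the support, cf.\ the discussion in \cite[page 13]{ClaeysNeuschelVenker}) together with the fact that the right-hand side $x_n(\t_2)+\frac{v}{c_2n^{2/3}}$ is within $\O(n^{\e-2/3})$ of $x_n(\t_2)=H_{t_n(\t_2),\mu_n}(x^*)+\O(n^{-1})$; alternatively one bootstraps from a crude bound. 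Once part (1) is available at $z=z_n$, it reads $x_n(\t_2)+\frac{t_n(\t_2)G_2}{2}(z_n-x^*)^2+\O(n^{-2/3+\e}) = x_n(\t_2)+\frac{v}{c_2n^{2/3}}$, hence $\frac{t_n(\t_2)G_2}{2}(z_n-x^*)^2 = \frac{v}{c_2n^{2/3}}+\O(n^{-2/3+\e}) = \O(n^{\e-2/3})$ using $|v|\leq Kn^\e$ and $\e<1/15$. Since $t_n(\t_2)G_2/2$ is bounded below by a positive constant (as $t_n(\t_2)\to t_{\rm cr}>0$ and $G_2>0$), solving for $(z_n-x^*)^2$ gives $(z_n-x^*)^2=\O(n^{\e-2/3})$, so $z_n-x^*=\O(n^{-1/3+\e/2})$. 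The same argument applies verbatim to $w_n(\t_1,u)$, and all bounds are uniform for $|u|,|v|\leq Kn^\e$ and $\t_1,\t_2$ in compacts because the constants in Proposition \ref{prop: Stieltjes comparison 2} and in the Lipschitz bounds for $H$ are.

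The main obstacle I anticipate is the a priori localization of the saddle points needed before applying part (1): one must establish that $z_n,w_n$ stay within the $\O(n^{-1/3+\e})$ window where the Stieltjes expansion is valid, which requires quantitative control on the inverse map $F_{t_n(\t),\mu_n}$ in a mesoscopic neighborhood of $x^*$ (injectivity, non-degeneracy of $H_{t_n(\t),\mu_n}'$). This is where Assumption 3 enters decisively — it guarantees a gap in the support of $\mu_n$ around $x^*$ of size $n^{-1/(\k+1)}$, which is much larger than $n^{-1/3+\e}$ for $\k>2$ and small $\e$, so that $H_{t_n(\t),\mu_n}$ is real-analytic and has controlled derivatives there; combined with the lower bound $1/t_{\rm cr}$ on $G_{\mu_n}'$-type quantities this pins down the saddles. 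Everything else is the elementary algebra of matching coefficients and using $1+t_{\rm cr}G_1=0$ together with the scale separation $|t_n(\t)-t_{\rm cr}|=\O(n^{-1/3})$.
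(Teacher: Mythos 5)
Your part (1) is essentially identical to the paper's: expand $H_{t_n(\t),\mu_n}(z)=z+t_n(\t)G_{\mu_n}(z)$ via Proposition \ref{prop: Stieltjes comparison 2}(1), identify the constant $x^*+t_n(\t)G_0=x_n(\t)$, and absorb the linear term using $1+t_n(\t)G_1=1-t_n(\t)/t_{\rm cr}=\O(n^{-1/3})$. That part is fine.

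For part (2), you correctly identify the crux — the a priori localization of $z_n,w_n$ in the disk $|z-x^*|\leq n^{-1/3+\e}$ — but you do not actually resolve it, and the tools you name would not do the job. You appeal to ``Lipschitz bounds of $H_{t_n(\t),\mu_n}$'' and ``non-degeneracy of $H_{t_n(\t),\mu_n}'$''; but $H_{t_n(\t),\mu_n}'(z_n)=0$ by the very definition of the saddle, and more to the point the expansion in part (1) shows $H'\sim t_n(\t)G_2(z-x^*)$ near $x^*$, so the inverse $F_{t_n(\t),\mu_n}$ has a Lipschitz constant of size $|z-x^*|^{-1}$ there. Controlling this requires knowing where $z_n$ is — exactly the localization you are trying to establish — so the argument is circular as stated. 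The closeness of $x_n(\t_2)+v/(c_2 n^{2/3})$ to $x_n(\t_2)$ does not by itself force $z_n$ near $x^*$ without first controlling the graph of $y_{t_n(\t_2),\mu_n}$ inside the disk.

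The paper's proof supplies this missing input through a genuinely different step: it uses Proposition \ref{prop: Stieltjes comparison 2}(1) to compute the sign of $-\Im G_{\mu_n}(x^*+n^{-1/3+\e}e^{i\theta})/\Im(\cdot)$ minus $1/t_n(\t)$ as a function of the angle $\theta$, finding it is negative for $\d<\theta<\pi/2-\d$ and positive for $\pi/2+\d<\theta<\pi-\d$. By the variational definition \eqref{def:y} of $y_{t_n(\t),\mu_n}$, this pins down where the graph enters ($\arg(w_{\rm in}-x^*)<\d$) and exits ($\arg(w_{\rm out}-x^*)\approx\pi/2$) the disk, hence that $F_{t_n(\t),\mu_n}$ maps the interval $[x_{\rm out},x_{\rm in}]$ (which has width $\gtrsim n^{-2/3+2\e}$ and contains $x_n(\t_2)+v/(c_2n^{2/3})$ for $|v|\le Kn^\e$) into the disk. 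Only after this geometric step is the quadratic inversion from part (1) legitimate. This angular analysis of $\Im G_{\mu_n}$ is the key missing ingredient in your proposal.
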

\begin{proof} We first note that all $\O$-terms in the proof will be uniform for $\t$ in compacts as $n\to\infty$.

		We start with proving part (1) and write 
		\[H_{t_n(\t),\mu_n}(z)=H_{t_n(\t),\mu_n}(x^{*})+z-x^{*}+t_n(\t)\left(G_{\mu_n}(z)-G_{\mu_n}(x^{*})\right).\]
		By Proposition \ref{prop: Stieltjes comparison 2} (1), 
		for $|z-x^*|= \mathcal{O}\left(n^{-\frac{1}{3}+\epsilon}\right)$ {and by definition of $x_n(\t)=x_n^\Ai(\t)$ in \eqref{def:xnAiry}}, we get
		\[H_{t_n(\t),\mu_n}(z)
		=x_n(\t)+\left(1+t_n(\t) G_1\right)(z-x^{*})+\frac{t_n(\t)G_2}{2} (z-x^*)^2 + \O\left(n^{-\frac{2}{3}-2\e}\right).
		\]
		Now the result follows from
		\begin{align}
		t_n(\t) G_1=-\frac{t_n(\t)}{t_{\rm cr}}=-1+\O(n^{-1/3})\label{t_n_G_1}.
		\end{align}
		
		To prove part (2), we deal with the saddle points \(z_n\) and \(w_n\). Assume that $G_2>0$ and let \(x+iy\) be a point on the perimeter of the disk centered at \(x^{*}\) of radius \(n^{-1/3+ \epsilon}\) of the form 
		\[x+iy = x^{*}+n^{-1/3+\epsilon} e^{i\theta},\]
		where we choose an angle \(\delta < \theta < \frac{\pi}{2}-\delta\) for some small \(\delta >0\). Then we have \(y=\sin(\theta)n^{-1/3+\epsilon}\), and from Proposition \ref{prop: Stieltjes comparison 2} (1) and \eqref{t_n_G_1} we get
		\begin{align*}
		\Im G_{\mu_n}(x+iy) =& G_1 n^{-1/3+\epsilon} \sin(\theta) + \frac{G_2}{2} n^{-2/3+2\epsilon} \sin(2\theta)+ \O\left(n^{-2/3-2\e}\right)\\
		=& \left(-\frac{1}{t_n(\tau)}+\mathcal{O}(n^{-1/3})\right)y +G_2 y n^{-1/3+\epsilon} \cos(\theta)+ \O\left(n^{-2/3-2\e}\right),
		\end{align*}
		as \(n \to \infty\). Hence, we obtain for large \(n\)
		\[-\frac{\Im G_{\mu_n}(x+iy)}{y} = \frac{1}{t_n(\tau)}-G_2 \cos(\theta) n^{-1/3+\epsilon}\left(1+\mathcal{O}\left(n^{-\epsilon}\right)\right) < \frac{1}{t_n(\tau)}.\]
		We note that the latter inequality is valid uniformly in \(\delta < \theta < \frac{\pi}{2}-\delta\) for  any $0<\d<\pi/2$ and $n$ sufficiently large (which depends on $\d$). In a similar fashion, we obtain uniformly for angles \( \frac{\pi}{2}+\delta < \theta < \pi -\delta\) {(with $0<\d<\pi/2$, which we will assume from now on)} and large \(n\)
		\[-\frac{\Im G_{\mu_n}(x+iy)}{y} > \frac{1}{t_n(\tau)}.\]
		By definition of the function \(y_{t_n(\t),\mu_n}(x)\) {in \eqref{def:y} and
		\begin{align}
		-\frac{\Im G_{\mu_n}(x+iy)}{y}=\int\frac{d\mu_n(s)}{(x-s)^2+y^2},
		\end{align}} it follows that its graph
		lies for any $\t$ from a fixed compact set below the arc $\{x^*+n^{-1/3+\epsilon}e^{i\theta}:\delta<\theta<\pi/2-\delta\}$ for $x\in (x^*+n^{-1/3+\epsilon}\cos(\pi/2-\delta),x^*+n^{-1/3+\epsilon}\cos\delta)$   
		and above the arc $\{x^*+n^{-1/3+\epsilon}e^{i\theta}:\pi/2+\delta<\theta<\pi-\delta\}$ for $x\in (x^*-n^{-1/3+\epsilon}\cos\delta,x^*-n^{-1/3+\epsilon}\cos(\pi/2-\delta))$   .

		In other words, we have the following behavior for sufficiently large \(n\): if we consider {the closed disk $D$ centered at \(x^*\) with radius \(n^{-1/3+\epsilon}\), the graph of $y_{t_n(\t),\mu_n}$ enters the disk (not necessarily for the first time) at a point   $w_{\rm in}:=x^*+n^{-1/3+\epsilon}e^{i\theta_1}$ with $0\leq\theta_1<\delta$ and leaves the disk at a point
		$w_{\rm out}:=x^*+n^{-1/3+\epsilon}e^{i\theta_2}$ with $\pi/2-\delta<\theta_2<\pi/2+\d$. Between $w_{\rm in}$ and $w_{\rm out}$, the graph stays in the interior of the disk.  Setting
		\begin{align}\label{def:x_1,2}
		x_{\rm in,out}:= H_{t_n(\t),\mu_n}\left(w_{\rm in, out}\right) \in \mathbb{R}, 
		\end{align}
		part (\ref{expansionH}) shows
		\[x_{\rm in,out}= x_n(\t)+\frac{t_n(\t)}{2} G_2 n^{-2/3+2\epsilon} e^{2i \theta_{1,2} }+ \mathcal{O}\left(n^{-2/3 +\epsilon}\right),\quad n\to\infty.\]}
		From this we infer for sufficiently large \(n\) and positive constants \(k_0, k_1, k_2\)
		\[x_{\rm out} < x_n(\t) < x_{\rm in}\]
		with 
		\[x_n(\t)-x_{\rm out} \sim k_0 n^{-2/3 +2\epsilon}, \qquad x_{\rm in}-x_n(\t) \sim k_1 n^{-2/3 +2\epsilon},\]
		and 
		\begin{align*}
		x_{\rm in} -x_{\rm out} =\frac{t_n(\t)}{2}G_2 n^{-2/3+2\epsilon} \left(\cos(2\theta_1)-\cos(2\theta_2)\right)\left(1+\mathcal{O} \left(n^{-\epsilon}\right)\right) > k_2 n^{-2/3+2\epsilon}.
		\end{align*}
		The function \(F_{t_n(\t),\mu_n}\) maps the interval \([x_{\rm out}, x_{\rm in}]\) bijectively onto the part of the graph of \(y_{t_n(\t),\mu_n}\) that lies between $w_{\rm out}$ and $w_{\rm in}$.
		Moreover, using the expansion of part (\ref{expansionH}) for any point \(z\) on the graph of \(y_{t_n(\t),\mu_n}\) that lies between $w_{\rm in}$ and $w_{\rm out}$, we have with \(x= H_{t_n(\t),\mu_n}(z)\)
		\[[x_{\rm in}, x_{\rm out}] \ni x = H_{t_n(\t),\mu_n}(z)= x_n(\t)+ \frac{t_n(\t)}{2}G_2 \left(z-x^{*}\right)^2+\mathcal{O}\left(n^{-2/3+\epsilon}\right).\]
		If we now consider \(z\) as a function of \(x\) defined on \([x_{\rm out}, x_{\rm in}]\), which means \(z=F_{t_n(\t),\mu_n}(x)\), then we obtain
		\[z= x^{*}+\sqrt{\frac{2}{t_n(\t) G_2} (x-x_n(\t))+\mathcal{O}\left(n^{-2/3+\epsilon}\right)},\quad n\to\infty,\]
		where the implicit constant in the \(\mathcal{O}\)-term can be chosen to be independent of \(x\in[x_{\rm in}, x_{\rm out}]\) (and we have to choose the suitable branch of the square root).
		Hence, for \(x=x_n(\t)+\frac{v}{c_2n^{2/3}}\) with \(\vert v\vert \leq K n^{\epsilon}\), for some constant \(K>0\), we obtain for the saddle point \(z_n\)
		\[z_n = x^* +\sqrt{\frac{2}{t_n(\t_2) G_2} \frac{v}{c_2 n^{2/3}} + \mathcal{O}\left(n^{-2/3+\epsilon}\right)}=x^{*}+\mathcal{O}\left(n^{-1/3+\epsilon/2}\right),\quad n\to\infty.\]
		The assertion for the saddle point \(w_n\) follows by the same arguments.
	\end{proof}

\subsection{Choice of Contours}
We start by choosing the contour $\Gamma$ in \eqref{def:Kn}. As shown in \cite[page 14]{ClaeysNeuschelVenker}, the graph of $x\mapsto y_{t_n(\t_1),\mu_n}(x)$ is a descent path for $w\mapsto-\phi_{n,\t_1}(w,u)$ passing through the saddle point $w_n$. Moreover,  $x\mapsto y_{t_n(\t_1),\mu_n}(x)$ is of bounded support which contains all initial points $X_1(0),\dots,X_n(0)$. We consider the counterclockwise oriented graph of $y_{t_n(\t_1),\mu_n}$ restricted to the convex hull of its support, and we close this path by joining it with its complex conjugate. As Lemma \ref{lemma: saddle points} shows, the saddle points $z_n$ and $w_n$ both lie in an $\O(n^{-1/3+\e})$-neighborhood of $x^*$. To avoid complications of these merging points, we will make a local modification of the path defined above. To this end, we consider the closed disk $D$ of radius $n^{-1/3+\e}$ around $x^*$. As observed in the proof of Lemma \ref{lemma: saddle points}, there are $n$-dependent points $w_{\rm in}$ and $w_{\rm out}$ on the boundary of the disk in the upper half plane such that the graph of $y_{t_n(\t_1),\mu_n}$ enters the disk at $w_{\rm in}$, and leaves it at $w_{\rm out}$ while staying in the interior of the disk between those two points. Recall from that proof that for any small $\delta>0$, we have $\Re (w_{\rm in}-x^*)>0$ and $0\leq\arg(w_{\rm in}-x^*)<\d$ for $n$ sufficiently large, whereas $\pi/2-\d<\arg(w_{\rm out}-x^*)<\pi/2+\d$. We will assume {$0<\d<\pi/6$}.

We now set $$w_{1,n}:=w_{\rm in} \quad \text{ and }\quad w_{2,n}:=x^*+e^{i(\pi/2+\d)}n^{-1/3+\e}$$ with the same $\d>0$ as above. The point $w_{2,n}$ lies on the boundary of the disk and below the graph of $y_{t_n(\t_1),\mu_n}$ in the sense that with
\begin{align}
w_{3,n}:=\Re w_{2,n}+iy_{t_n,\mu_n}(\Re w_{2,n}),
\end{align}
we have $\Im w_{3,n}\geq\Im w_{2,n}$ for $n$ large enough. 
We now specify the $w$-integration contour $\Gamma$ by setting 
\begin{equation}\Gamma:=\Gamma_{\rm out}\cup \Gamma_{\rm in}\cup\overline{\Gamma_{\rm out}\cup \Gamma_{\rm in}},\end{equation} 
where $\Gamma_{\rm in}:= [w_{1,n},\Re w_{1,n}]\cup[\Re w_{1,n},x^*]\cup[x^*,w_{2,n}]$. The contour $\Gamma_{\rm out}$ is the union $\Gamma_{\rm out}:=\Gamma_{\rm out,1}\cup\Gamma_{\rm out,2}$, where $\Gamma_{\rm out, 1}$ is the part of the graph of $y_{t_n(\t_1),\mu_n}$ which lies  in the upper half plane and to the right of $w_{1,n}$, and  $\Gamma_{\rm out, 2}$ is the  union of the vertical line segment $[w_{2,n},w_{3,n}]$ with the part of the graph of $y_{t_n(\t_1),\mu_n}$ which lies  in the upper half plane and to the left of $w_{3,n}$. We choose the natural counter-clockwise orientation, see Figure \ref{fig:contourGamma} for a visualization of this contour.
\begin{figure}[h]
	\centering
	\footnotesize
	\def\svgwidth{0.7\columnwidth}
	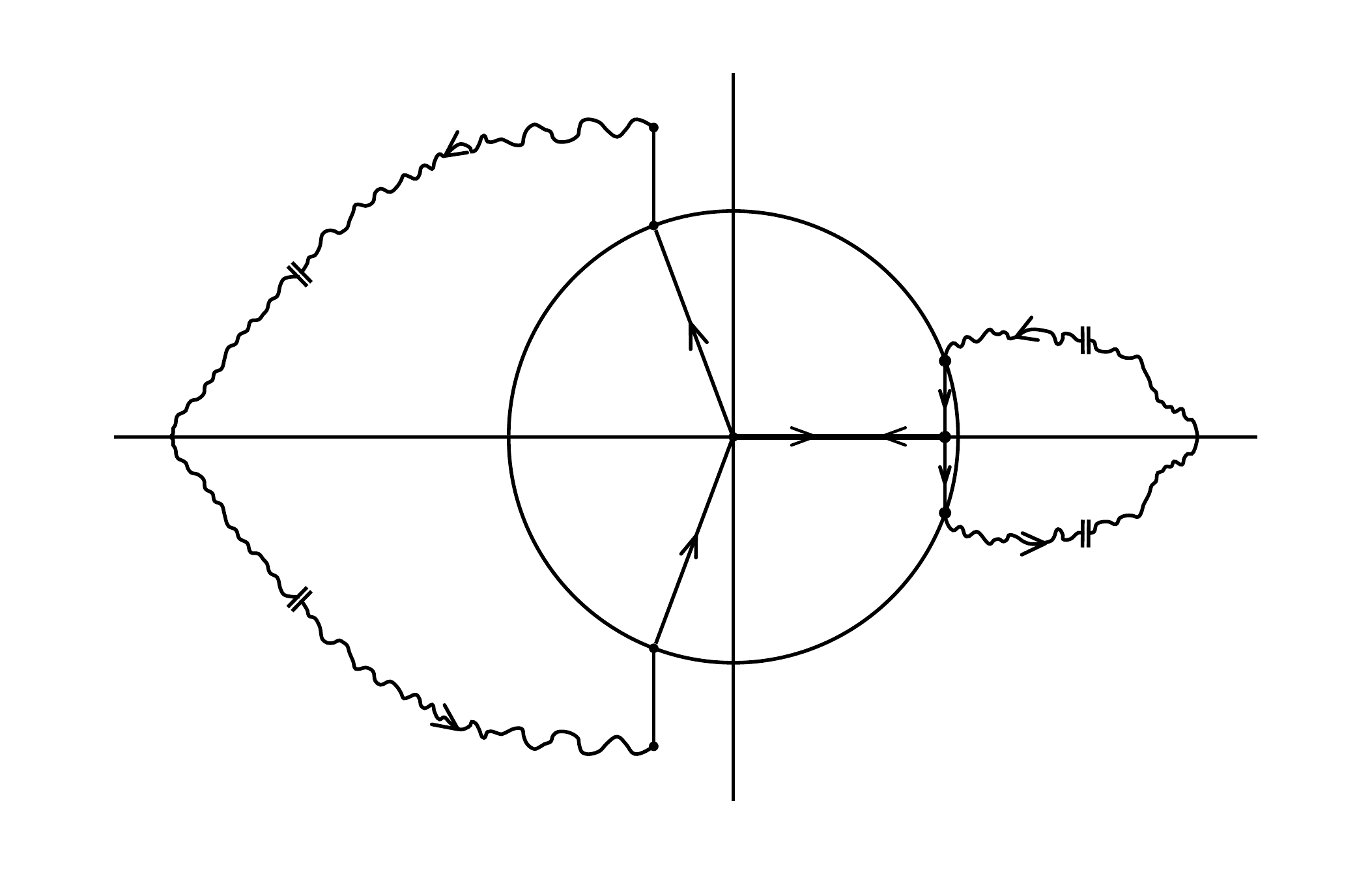
	\caption{Choice of contour $\Gamma$}\label{fig:contourGamma}
\end{figure}\\
For a modification of the $z$-integration contour $x_0+i\R$ in \eqref{def:Kn}, we define
\begin{align}\label{z_n_hat}
\widehat z_{n}:=x^*+e^{\frac{i\pi}{3}}n^{-1/3+\e},
\end{align}
which lies on the boundary of the disk \(D\). With this definition we modify $x_0+i\R$ to the contour $\Sigma$ given as (see Figure \ref{fig:contourSigma})
\begin{equation}
\Sigma:=\Sigma_{\rm in}\cup\Sigma_{\rm out} \cup \overline{\Sigma_{\rm in}\cup\Sigma_{\rm out}},\qquad 
\Sigma_{\rm out}:=[\widehat z_n, \widehat z_n+i\infty],\qquad
\Sigma_{\rm in}:=[x^*,\widehat z_n].
\end{equation}
\begin{figure}[h]
	\centering
	\def\svgwidth{0.5\columnwidth}
	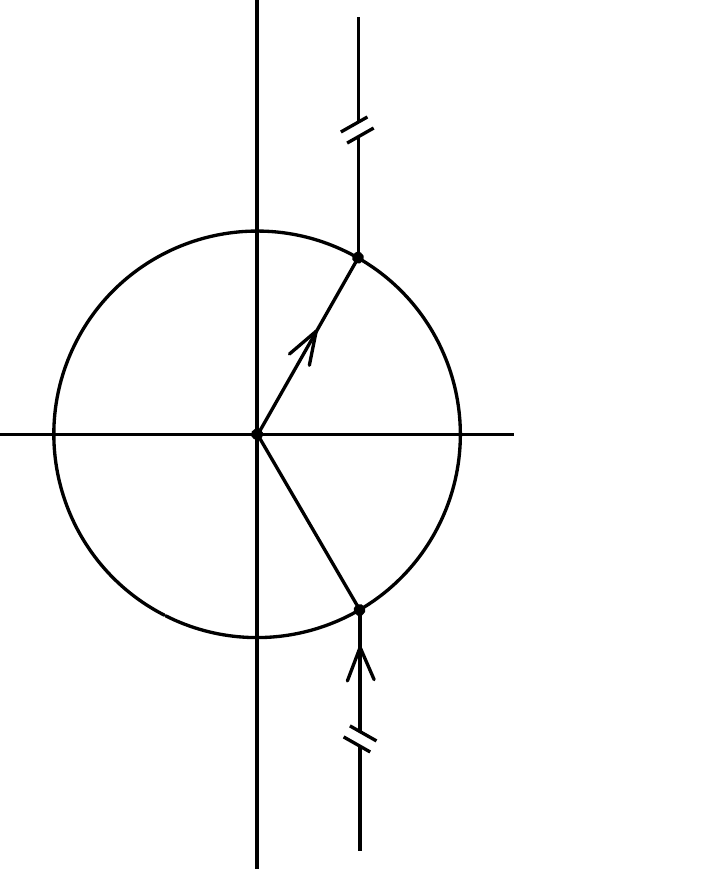
	\caption{Choice of contour $\Sigma$}\label{fig:contourSigma}
\end{figure}
{For simplicity of notation, we restrict our analysis of the correlation kernel to the case $\t_2\geq\t_1$, in which the heat kernel part, i.e.\ the second term in \eqref{def:Kn}, is not present.  It is straightforward to check that, for $\t_2<\t_1$, plugging the rescaled variables into the heat kernel in \eqref{def:Kn} and adding the gauge factors given  {in \eqref{gauge_factor}} yields the heat kernel part of the extended Airy kernel.

From \eqref{def:Kn} and \eqref{conjugation} we obtain 
\begin{multline}
\frac{1}{c_2 n^{2/3}} \tilde{K}_{n,t_n(\t_1),t_n(\t_2)}\left(x_n(\t_1)+\frac{u}{c_2 n^{2/3}}, x_n(\t_2)+\frac{v}{c_2 n^{2/3}}\right)\\\label{rescaledkernel}
=  \frac{n^{1/3}e^{f_n(\t_2,v)-f_n(\t_1,u)}}{c_2 (2\pi i)^2 \sqrt{t_n(\t_1)t_n(\t_2)}} \int_\Sigma dz \int_{\Gamma} dw ~ \frac{e^{\phi_{n,\t_2} (z,v) - \phi_{n,\t_1} (w,u)}}{z-w}, 
\end{multline}
where $\phi_{n,\t}$ has been defined in \eqref{def:phi} and we write in a slight abuse of notation $f_n(\t,u)$ instead of
\[f_n\lb t_n(\t),x_n(\t)+\frac{u}{c_2n^{2/3}}\rb
\]
with $f_n$ from \eqref{gauge_factor}. 
Note that the integration contours for $z$ and $w$ in \eqref{rescaledkernel} have an intersection point at $z=w=x^*$ and hence do not satisfy the conditions imposed in \eqref{def:Kn}. However, observing that the singularity is integrable, we easily derive \eqref{rescaledkernel} from \eqref{def:Kn} by taking a limit.

We split the  double integral \eqref{rescaledkernel} into the three parts
\begin{align}
&\label{def:Kn1}{K}_{n,\t_1,\t_2}^{(1)}(u,v):=\frac{n^{1/3}e^{f_n(\t_2,v)-f_n(\t_1,u)}}{c_2 (2\pi i)^2 \sqrt{t_n(\t_1)t_n(\t_2)}} \int_{\Sigma_{\rm in}\cup \overline{\Sigma_{\rm in}}} dz \int_{\Gamma_{\rm in}\cup\overline{\Gamma_{\rm in}}} dw ~ \frac{e^{\phi_{n,\t_2} (z,v) - \phi_{n,\t_1} (w,u)}}{z-w},\\
&\label{def:Kn2}{K}_{n,\t_1,\t_2}^{(2)}(u,v):=\frac{n^{1/3}e^{f_n(\t_2,v)-f_n(\t_1,u)}}{c_2 (2\pi i)^2 \sqrt{t_n(\t_1)t_n(\t_2)}} \int_{\Sigma} dz \int_{\Gamma_{\rm out}\cup\overline{\Gamma_{\rm out}}} dw ~ \frac{e^{\phi_{n,\t_2} (z,v) - \phi_{n,\t_1} (w,u)}}{z-w},\\
&\label{def:Kn3}{K}_{n,\t_1,\t_2}^{(3)}(u,v):=\frac{n^{1/3}e^{f_n(\t_2,v)-f_n(\t_1,u)}}{c_2 (2\pi i)^2 \sqrt{t_n(\t_1)t_n(\t_2)}} \int_{\Sigma_{\rm out}\cup\overline{\Sigma_{\rm out}}} dz \int_{\Gamma_{\rm in}\cup\overline{\Gamma_{\rm in}}} dw ~ \frac{e^{\phi_{n,\t_2} (z,v) - \phi_{n,\t_1} (w,u)}}{z-w}.\quad
\end{align}

\subsection{Analysis of the main term}
The main term giving the extended Airy kernel in the limit is \({K}_{n,\t_1,\t_2}^{(1)}\) as the following proposition shows.
\begin{prop}\label{prop: int1} For any constant $\s>0$ we have
	\begin{equation*} {K}_{n,\t_1,\t_2}^{(1)}(u,v)=\mathbb K^{\rm Ai}_{\t_1,\t_2}\left(u, v\right)+\O\lb \frac{e^{-\s(u+v)}}{n^{\e}}\rb,
	\end{equation*}
	as \(n\to\infty\), where the $\O$ term is uniform in $u,v\in [-M, Mn^{\epsilon}]$ for any fixed $M>0$.
\end{prop}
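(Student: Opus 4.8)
The plan is a standard steepest-descent analysis of the double contour integral in \eqref{def:Kn1}, but carried out carefully because the saddle points $z_n,w_n$ are merging and only known up to the mesoscopic error of Lemma \ref{lemma: saddle points}. First I would introduce the rescaled local variables by setting, on $\Sigma_{\rm in}\cup\overline{\Sigma_{\rm in}}$ and $\Gamma_{\rm in}\cup\overline{\Gamma_{\rm in}}$,
\[
z=x^*+\frac{\zeta}{c_2 n^{1/3}},\qquad w=x^*+\frac{\omega}{c_2 n^{1/3}},
\]
so that $dz\,dw/(z-w)=d\zeta\,d\omega/(\zeta-\omega)$ and the prefactor $n^{1/3}/(c_2\sqrt{t_n(\tau_1)t_n(\tau_2)})$ combines with $1/(c_2 n^{2/3})$ and the Jacobians to produce exactly $1/(2\pi i)^2$ up to a factor $t_{\rm cr}/\sqrt{t_n(\tau_1)t_n(\tau_2)}=1+\O(n^{-1/3})$. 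Under this scaling the image of $\Sigma_{\rm in}$ is, by \eqref{z_n_hat}, the segment from $0$ to $e^{i\pi/3}n^\e$, and the image of $\Gamma_{\rm in}$ is a polygonal path from $0$ out to roughly $e^{i(\pi/2+\delta)}n^\e$; as $n\to\infty$ these converge to the rays $\Sigma^{\rm Ai}$ and $\Gamma^{\rm Ai}$ (after using $\delta<\pi/6$ to guarantee the $\zeta$-contour stays in the sector $\arg\zeta\in(\pi/2-\delta,\pi/2+\delta)$ and the $\omega$-contour in $(-\pi/3,\pi/3)$, the correct sectors for the cubic exponent).

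The heart of the matter is the expansion of the exponent $\phi_{n,\tau_2}(z,v)-\phi_{n,\tau_1}(w,u)$ together with the gauge factors. I would write $g_{\mu_n}(z)=g_{\mu_n}(x^*)+\int_{x^*}^z G_{\mu_n}(\xi)\,d\xi$ and insert the expansion \eqref{eq:expansionG_mun2} of Proposition \ref{prop: Stieltjes comparison 2}(1), valid precisely on the scale $|z-x^*|=\O(n^{-1/3+\e})$ we are using, giving
\[
ng_{\mu_n}(z)=ng_{\mu_n}(x^*)+nG_0(z-x^*)+\frac{nG_1}{2}(z-x^*)^2+\frac{nG_2}{6}(z-x^*)^3+\O(n\cdot n^{-1+\e}\cdot n^{-2/3-2\e}),
\]
and similarly expand the quadratic term $\tfrac{n}{2t_n(\tau_2)}(z-x_n(\tau_2)-v/(c_2n^{2/3}))^2$ using $x_n(\tau_2)=x^*+t_n(\tau_2)G_0$ from \eqref{def:xnAiry}, $t_n(\tau_2)=t_{\rm cr}+2\tau_2/(c_2^2 n^{1/3})$, and $t_{\rm cr}G_1=-1$. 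The linear-in-$(z-x^*)$ terms proportional to $n$ cancel against the gauge factor $f_n(\tau_2,v)$ by the very definition \eqref{gauge_factor}; the quadratic-in-$(z-x^*)$ terms of order $n\cdot n^{-2/3}=n^{1/3}$ cancel between $\tfrac{nG_1}{2}$ and the $\tfrac{n}{2t_n(\tau_2)}$-term up to the $\tau_2$-dependent piece, which after rescaling becomes $-\tau_2\zeta^2$; the cubic terms of order $n\cdot n^{-1}=1$ become $\tfrac{\zeta^3}{3}$; and the cross term with $v$ becomes $-\zeta v$. Collecting, the exponent converges to $\tfrac{\zeta^3}{3}-\zeta v-\tau_2\zeta^2-\tfrac{\omega^3}{3}+\omega u+\tau_1\omega^2$, which is exactly the exponent in \eqref{extended_Airy}. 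One must track that all error terms are $\O(n^{-\e})$ after multiplication by the relevant powers of $n$, and uniformly so for $u,v\in[-M,Mn^\e]$ and $\tau_1,\tau_2$ in compacts; this is where the constraint $\e<\tfrac1{15}$ and $\e<\tfrac{\kappa-2}{6(\kappa+1)}$ enters, matching Proposition \ref{prop: Stieltjes comparison 2}.

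Having the pointwise/uniform convergence of the integrand on the finite (length-$\O(n^\e)$) contours, the final step is to upgrade to convergence of the integral with the claimed error $\O(e^{-\s(u+v)}n^{-\e})$. I would (i) truncate both the limiting contours $\Sigma^{\rm Ai},\Gamma^{\rm Ai}$ and the rescaled contours at radius $n^\e$, controlling the discarded tails by the cubic decay $\Re(\zeta^3)<0$, $\Re(-\omega^3)<0$ along the rays — these tails are superpolynomially small and easily absorbed; (ii) on the truncated region use the exponent expansion to compare integrand to the Airy integrand, bounding the difference by $\O(n^{-\e})$ times an integrable majorant with the Gaussian/cubic decay, which after integration yields the $n^{-\e}$ rate; (iii) extract the factor $e^{-\s(u+v)}$ by noting that shifting the contours $\zeta\mapsto\zeta+\s$, $\omega\mapsto\omega-\s$ (permissible since no poles are crossed once the contours are nested appropriately, $\Re\zeta>\Re\omega$ along $\Sigma^{\rm Ai}$ vs $\Gamma^{\rm Ai}$) pulls out exactly $e^{-\s v-\s u}$ from the linear terms while only improving the decay in the remaining integral. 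The main obstacle I anticipate is bookkeeping the cancellations in the exponent cleanly — in particular verifying that the gauge factor $f_n$ and the $t_{\rm cr}G_1=-1$ identity kill all the large ($n^{1/3}$ and $n^{2/3}$) terms and that the surviving $\O(\cdot)$ is genuinely $n^{-\e}$ uniformly out to $|v|\le Mn^\e$ — rather than any single estimate; the contour-deformation and tail bounds are routine given the sector conditions already arranged in the choice of $\Sigma$ and $\Gamma$.
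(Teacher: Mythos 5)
Your strategy is the same as the paper's: rescale $z,w$ near $x^*$ at scale $n^{-1/3}$, expand $ng_{\mu_n}$ via Proposition~\ref{prop: Stieltjes comparison 2}(1), track the cancellations produced by the gauge factor and the identity $t_{\rm cr}G_1=-1$, and then control the tails and the $h_n=\O(n^{-\e})$ remainder. However, the rescaling you write down is incorrect and would not produce $\mathbb K^{\rm Ai}_{\tau_1,\tau_2}$. You set $z=x^*+\zeta/(c_2 n^{1/3})$, but the correct substitution is $z=x^*+c_2 t_{\rm cr}\zeta/n^{1/3}$, as in the paper. Since $c_2=2^{1/3}/(G_2^{1/3}t_{\rm cr})$, one has $c_2 t_{\rm cr}=(2/G_2)^{1/3}$, which is precisely the length scale that turns $\frac{nG_2}{6}(z-x^*)^3$ into $\zeta^3/3$ and the cross term $-\frac{n}{t_n(\tau_2)}(z-x^*)\frac{v}{c_2 n^{2/3}}$ into $-\zeta v$. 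Your $1/c_2=G_2^{1/3}t_{\rm cr}/2^{1/3}$ agrees with this only when $c_2^2 t_{\rm cr}=1$, which is not a general identity; with your scaling the exponent limit would be $\alpha\zeta^3-\beta\zeta v-\cdots$ with $\alpha,\beta$ depending on $G_2$ and $t_{\rm cr}$, not the normalized form in~\eqref{extended_Airy}. Relatedly, the claim that $dz\,dw/(z-w)=d\zeta\,d\omega/(\zeta-\omega)$ is false: under $z=x^*+a\zeta$, $w=x^*+a\omega$ one has $dz\,dw/(z-w)=a\,d\zeta\,d\omega/(\zeta-\omega)$, and that residual factor $a=c_2 t_{\rm cr}/n^{1/3}$ is exactly what combines with the prefactor $\frac{n^{1/3}}{c_2\sqrt{t_n(\tau_1)t_n(\tau_2)}}$ to give $\frac{t_{\rm cr}}{\sqrt{t_n(\tau_1)t_n(\tau_2)}}\to 1$. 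Your purported cancellation omits it and breaks the prefactor bookkeeping.

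Two smaller issues. First, you have the sectors swapped: the $\zeta$-contour (image of $\Sigma_{\rm in}$, determined by $\widehat z_n=x^*+e^{i\pi/3}n^{-1/3+\e}$) sits at angles $\pm\pi/3$, where $\Re\zeta^3<0$, and becomes $\Sigma^{\rm Ai}$; the $\omega$-contour (image of $\Gamma_{\rm in}$) has one leg at angles $\pm(\pi/2+\delta)$ — where $\Re\omega^3>0$ — becoming $\Gamma^{\rm Ai}$, and a second leg $\widehat\Gamma_{\rm right}$ near angle $0$ whose contribution is in the wrong sector and must be shown to be exponentially small separately, as the paper does. Second, the paper first cancels the two traversals of $[x^*,\Re w_{1,n}]$ inside $\Gamma_{\rm in}$; without that step the image of $\Gamma_{\rm in}$ is not the clean polygonal arc you describe. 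These are secondary; the scaling error above is the genuine gap.
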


\begin{proof}All $\O$-terms will be uniform for $\t_1,\t_2$ in compacts as $n\to\infty$.
 Note that the $w$-integral can be restricted to $[w_{1,n},\Re w_{1,n}]\cup[x^*,w_{2,n}]$ and its complex conjugate, as the two integrals over the interval $[x^*,\Re w_{1,n}]$ cancel each other.

 We will reparametrize $z$ and $w$ by 
 \begin{align}
 z=:x^{*}+\frac{c_2t_{\rm cr}\zeta}{n^{1/3}}\quad \text{ and } \quad w=:x^{*}+\frac{c_2t_{\rm cr}\w}{n^{1/3}}.\label{change_of_variables}
 \end{align} With this parametrization and Proposition \ref{prop: Stieltjes comparison 2}, we have uniformly for $z$ in the disk \(D\) (which is equivalent to \(\zeta = \mathcal{O}\left(n^{\epsilon }\right)\))
\begin{align}
&ng_{\mu_n}(z)=ng_{\mu_n}(x^*)+n\int_{x^*}^zd\xi~ G_{\mu_n}(\xi)\\
&=ng_{\mu_n}(x^*)+n^{2/3}G_0c_2t_{\rm cr}\zeta+\frac{n^{1/3}G_1c_2^2t_{\rm cr}^2\z^2}2+\frac{G_2c_2^3t_{\rm cr}^3\z^3}6+{\O\lb\frac{\lv\zeta\rv}{n^{2\e}}\rb}.\label{calculation:g}
\end{align} 
Using 
\begin{align}
\frac{t_{\rm cr}}{t_n(\t_2)}=1+\O\lb\frac{\t_2}{n^{1/3}}\rb
\end{align}
and
\begin{align}
 &\frac{n^{1/3}c_2^2t_{\rm cr}^2\z^2}2 \lb G_1+\frac1{t_n(\t_2)}\rb= \frac{n^{1/3}c_2^2t_{\rm cr}^2\z^2}2\lb-\frac{1}{t_{\rm cr}}+\frac1{t_n(\t_2)}\rb\\
 &=-\z^2\t_2 \frac{t_{\rm cr}}{t_n(\t_2)}=-\z^2\t_2+\O\lb \frac{1+\lv \t_2\rv+\lv\z\rv^2}{n^{1/3}}\rb,
\end{align}
 we obtain after an elementary computation from \eqref{def:phi} and \eqref{calculation:g} 
\begin{align}
\phi_{n,\t_2}(z,v)=\frac{\z^3}3-\t_2\z^2-v\z +\frac{nt_n(\t_2)G_0^2}2+\frac{n^{1/3}G_0v}{c_2}+ng_{\mu_n}(x^*)+{\O\lb n^{-\e}\rb} \label{calculation:phi}
\end{align}
{with the $\O$-term being uniform for $\lv\zeta\rv,\lv v\rv\leq Kn^{\e}$}, \(K>0\) being an absolute constant.
A similar computation with $\phi_{n,\t_1}(w,u)$ yields
\begin{align}
&\phi_{n,\t_2}(z,v)-\phi_{n,\t_1}(w,u)+f_n(\t_2,v)-f_n(\t_1,u)\\
&=\frac{\z^3}3-\t_2\z^2- v\z-\frac{\w^3}3+\t_1\w^2+ u\w+{\O\lb n^{-\e}\rb},\label{phase_function_local}
\end{align}
{with the $\O$-term now being uniform for $\lv\zeta\rv,\lv \w\rv,\lv v\rv,\lv u\rv\leq Kn^{\e}$.}
With the changes of variables $z\mapsto \zeta$ and $w\mapsto \omega$, the remaining constant in front of the double integral reduces to
\begin{align}
\frac{t_{\rm cr}}{(2\pi i)^2\sqrt{t_n(\t_1)t_n(\t_2)}}=\frac{1}{(2\pi i)^2}+\O\lb \frac{\lv \t_1\rv+\lv \t_2\rv}{n^{1/3}}\rb.
\end{align}
{Thus there is a function $h_n(\t_1,\t_2,\z,\w,u,v)={\O\lb n^{-\e}\rb}$ (uniformly in all relevant parameters) such that we can write}
\begin{align}
&{K}_{n,\t_1,\t_2}^{(1)}(u,v)\\
&=\lb\frac{1}{(2\pi i)^2}+\O\lb \frac{\lv \t_1\rv+\lv \t_2\rv}{n^{1/3}}\rb\rb\int_{\widehat\Sigma} d\z \int_{\widehat\Gamma} d\w ~ \frac{e^{\frac{\z^3}3-\t_2\z^2- v\z-\frac{\w^3}3+\t_1\w^2+u\w +h_n(\t_1,\t_2,\z,\w,u,v)}}{\z-\w},\quad\quad\label{K_n_1_beginning}
\end{align}
where $\widehat\Sigma$ and $\widehat\Gamma$ are the contours $\Sigma_{\rm in}$ and $\Gamma_{\rm in}$ under the change of variables \eqref{change_of_variables}, i.e.~
\[\widehat\Sigma:=\left[\frac{e^{-i\frac{\pi}{3}}n^\epsilon}{c_2 t_{\rm cr}},0\right]\cup\left[0,\frac{e^{i\frac{\pi}{3}}n^\epsilon}{c_2 t_{\rm cr}}\right],\] and $\widehat\Gamma:=\widehat\Gamma_{\rm left}\cup\widehat\Gamma_{\rm right}$ with 
\[
\widehat\Gamma_{\rm left}:=\left[-\frac{ie^{-i\delta}n^{\epsilon}}{c_2 t_{\rm cr}},0\right]\cup\left[0,\frac{ie^{i\delta}n^{\epsilon}}{c_2 t_{\rm cr}}\right],\quad \widehat\Gamma_{\rm right}:=\left[\frac{n^{1/3}}{c_2 t_{\rm cr}}(w_{1,n}-x^*), \overline{\frac{n^{1/3}}{c_2 t_{\rm cr}}(w_{1,n}-x^*)}\right].\]

{We will show first that 
\begin{equation}\label{attemptLemma4.1_2}\int_{\widehat\Sigma} d\zeta \int_{\widehat\Gamma_{\text{right}}} d\omega \frac{e^{\frac{\z^3}3-\t_2\z^2-v\z -\frac{\w^3}3+\t_1\w^2+u\w +h_n(\t_1,\t_2,\z,\w,u,v)}}{\zeta -\omega} =\mathcal{O}\left(e^{-dn^{3\e}}\right),
\end{equation}
for some $d>0$, uniformly in $u,v\in[-M,Mn^\e]$ and $\t_1,\t_2$ from compacts. To see this, we take absolute values and use the bounds
\begin{align}\label{decoupling}
&\frac{1}{\vert \zeta - \omega\vert} \leq \frac{K}{n^{\epsilon}},\quad\lv\exp(h_n)\rv\leq C,
\end{align}
valid for \(\zeta\) on \(\widehat\Sigma\), \(\omega\) on \(\widehat\Gamma_{\text{right}}\) {and} $C,K>0$ are constants. This gives as an upper bound for the left-hand side of \eqref{attemptLemma4.1_2}
\[CKn^{-\epsilon} \int_{\widehat\Sigma} \vert d\zeta\vert \left\vert e^{\frac{\z^3}3-\t_2\z^2-v\z }\right\vert
\int_{\widehat\Gamma_{\text{right}}} \vert d\omega\vert 
\left\vert e^{-\frac{\w^3}3+\t_1\w^2+u\w }\right\vert. \]
Now, since $0\leq\arg(w_{1,n}-x^*)<\d<\frac{\pi}{6}$, we have for $\w\in \widehat{\Gamma}_{\text{right}}$ and $u\leq M n^\e$ for any $M>0$ that $\Re (-\w^3/3+\t_1\w^2+u\w )<-dn^{3\e}$ for some $d>0$. As the length of the contour $\widehat \Gamma_{\text{right}}$  {is $\O(n^{\e})$}, this proves \eqref{attemptLemma4.1_2} upon noting that the $\z$-integral is uniformly bounded in $n$ for $v>-M$ and any $M>0$.
}

{In the next step, we will get rid of the remainder $h_n$ in the exponent of the right-hand side of \eqref{K_n_1_beginning}, where we are now allowed to replace $\widehat{\Gamma}$ by $\widehat{\Gamma}_{\rm left}$. With the inequality  $\lv 1-\exp(h_n)\rv\leq \lv h_n\rv\exp\lv h_n\rv$, we have for any constant $\s>0$ 
	\begin{align}\label{K_n^1_second_last}
	&e^{ \s(u+v)}\Bigg\vert \int_{\widehat\Sigma} d\z \int_{\widehat\Gamma_{\rm left}} d\w ~ \frac{e^{\frac{\z^3}3-\t_2\z^2-\z v-\frac{\w^3}3+\t_1\w^2+u\w }}{\z-\w}\\
	&\qquad-\int_{\widehat\Sigma} d\z \int_{\widehat\Gamma_{\rm left}} d\w ~ \frac{e^{\frac{\z^3}3-\t_2\z^2-\z v-\frac{\w^3}3+\t_1\w^2+u\w +h_n(\t_1,\t_2,\z,\w,u,v)}}{\z-\w}\Bigg\vert\\
	&\leq \int_{\widehat\Sigma} d\lv\z\rv \int_{\widehat\Gamma_{\rm left}} d\lv\w\rv ~ \frac{\left\lv h_n(\t_1,\t_2,\z,\w,u,v)\right\rv\left\lv e^{\frac{\z^3}3-\t_2\z^2-v(\z- \s) -\frac{\w^3}3+\t_1\w^2+u(\w+ \s) +\lv h_n(\t_1,\t_2,\z,\w,u,v)\rv}\right\rv}{\lv\z-\w\rv}.\label{K_n_1_estimate}
	\end{align}
As the integral
\begin{align}
\int_{\widehat\Sigma} d\lv\z\rv \int_{\widehat\Gamma_{\rm left}} d\lv\w\rv ~ \frac{\lv e^{\frac{\z^3}3-\t_2\z^2-v(\z-\s) -\frac{\w^3}3+\t_1\w^2+u(\w+ \s) }\rv}{\lv\z-\w\rv}
\end{align}
 is uniformly bounded in $n$ and $\lv h_n(\t_1,\t_2,\z,\w,u,v)\rv={\O\lb n^{-\e}\rb}$, the r.h.s.~of \eqref{K_n_1_estimate} is $\O(n^{-\e})$.
}

A standard argument now shows that extending the contours $\widehat \Sigma$ and 
\begin{align}
\left[-\frac{ie^{-i\delta}n^{\epsilon}}{c_2 t_{\rm cr}},0\right]\cup\left[0,\frac{ie^{i\delta}n^{\epsilon}}{c_2 t_{\rm cr}}\right]
\end{align}
to $\Sigma^\Ai$ and $ie^{i\delta}\mathbb R^+ \cup -ie^{-i\delta}\mathbb R^+$ {yields an exponentially small error}. Finally, by analyticity, the contour $ie^{i\delta}\mathbb R^+ \cup -ie^{-i\delta}\mathbb R^+$ can be deformed to $\Gamma^\Ai$. Combining the above estimates, we obtain the result.
\end{proof}

\subsection{Analysis of the remaining terms}
In this subsection, we discuss the necessary asymptotic estimates of the double integrals $K_{n,\t_1,\t_2}^{(i)}$ for $i=2,3$. We will employ the descent properties  of {$\phi_{n,\t}$} on the chosen contours, which are the content of the following lemma. For its statement, recall the definition of the saddle points $z_n$ and $w_n$ in \eqref{def:znwn}.
\begin{lemma}\label{lemma:realpartphi}\leavevmode
	\begin{enumerate}\item For any $u,\t\in\mathbb R$ and \(n \in \mathbb N\), the function \[\mathbb R\ni\alpha\mapsto \Re\phi_{n,\t}\left(\alpha+iy_{t_n(\t),\mu_n}(\alpha),u\right)\] achieves its unique local minimum at $\alpha=\Re w_n$, is strictly decreasing for $\alpha<\Re w_n$ and strictly increasing for $\alpha>\Re w_n$.
		\item For any $\alpha, v,\t\in\mathbb R$ and \(n\in \mathbb N\), the function 
		\[[0,\infty)\ni\beta\mapsto \Re\phi_{n,\t}\left(\alpha+i\beta,v\right)\]
	 achieves its unique local maximum at $\beta=y_{t_n(\t),\mu_n}(\alpha)$, is strictly increasing for $0<\beta<y_{t_n(\t),\mu_n}(\alpha)$ and strictly decreasing for $\beta>y_{t_n(\t),\mu_n}(\alpha)$.
	\end{enumerate}
\end{lemma}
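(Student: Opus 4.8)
The plan is to compute the relevant partial derivatives of $\Re\phi_{n,\t}$ along the two families of curves and show they change sign exactly once, with the sign change located at the claimed point. Recall from \eqref{def:phi} that
\[
\phi_{n,\t}(z,v)=\frac{n}{2t_n(\t)}\left(z-x_n(\t)-\frac{v}{c_2n^{2/3}}\right)^2+ng_{\mu_n}(z),
\]
so $\partial_z\phi_{n,\t}(z,v)=\frac{n}{t_n(\t)}\left(z-x_n(\t)-\frac{v}{c_2n^{2/3}}\right)+nG_{\mu_n}(z)$. Writing $z=\alpha+i\beta$ and using the Cauchy--Riemann equations, the two monotonicity statements reduce to sign analyses of $\Re\partial_z\phi_{n,\t}$ (for part (1), along the graph $\beta=y_{t_n(\t),\mu_n}(\alpha)$) and $\Im\partial_z\phi_{n,\t}$ (for part (2), along a vertical line). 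The key structural input is the defining property of $y_{t_n(\t),\mu_n}$ in \eqref{def:y}: for $\beta=y_{t_n(\t),\mu_n}(\alpha)>0$ one has $\int\frac{d\mu_n(s)}{(\alpha-s)^2+\beta^2}=\frac1{t_n(\t)}$, i.e.\ $-\Im G_{\mu_n}(\alpha+i\beta)/\beta=1/t_n(\t)$, which is exactly the relation that makes the graph a descent/ascent curve; for $0<\beta<y_{t_n(\t),\mu_n}(\alpha)$ the integral exceeds $1/t_n(\t)$ and for $\beta>y_{t_n(\t),\mu_n}(\alpha)$ it is strictly smaller, since $\beta\mapsto\int\frac{d\mu_n(s)}{(\alpha-s)^2+\beta^2}$ is strictly decreasing.

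For part (2), fix $\alpha$ and differentiate $g(\beta):=\Re\phi_{n,\t}(\alpha+i\beta,v)$ in $\beta$: $g'(\beta)=-\Im\partial_z\phi_{n,\t}(\alpha+i\beta,v)=-n\beta/t_n(\t)-n\,\Im G_{\mu_n}(\alpha+i\beta)$. Since $-\Im G_{\mu_n}(\alpha+i\beta)=\beta\int\frac{d\mu_n(s)}{(\alpha-s)^2+\beta^2}$, we get $g'(\beta)=n\beta\left(\int\frac{d\mu_n(s)}{(\alpha-s)^2+\beta^2}-\frac1{t_n(\t)}\right)$. The bracket is strictly positive for $0<\beta<y_{t_n(\t),\mu_n}(\alpha)$, zero at $\beta=y_{t_n(\t),\mu_n}(\alpha)$, and strictly negative beyond, and $g'(0)=0$; this gives the claimed strict increase then strict decrease, hence the unique local maximum at $\beta=y_{t_n(\t),\mu_n}(\alpha)$. (When $y_{t_n(\t),\mu_n}(\alpha)=0$ the function is simply strictly decreasing on $(0,\infty)$, consistent with the statement.)

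For part (1), set $h(\alpha):=\Re\phi_{n,\t}\!\left(\alpha+iy_{t_n(\t),\mu_n}(\alpha),u\right)$. Because the graph of $y_{t_n(\t),\mu_n}$ is a level curve of $\beta\mapsto\Re\phi_{n,\t}(\alpha+i\beta,u)$ (it is the critical-in-$\beta$ curve by part (2)), differentiating along it only picks up the $\alpha$-partial: $h'(\alpha)=\Re\partial_z\phi_{n,\t}\!\left(\alpha+iy_{t_n(\t),\mu_n}(\alpha),u\right)$ evaluated as a real derivative, which one checks equals $\frac{n}{t_n(\t)}\left(\alpha-x_n(\t)-\frac{u}{c_2n^{2/3}}\right)+n\,\Re G_{\mu_n}(\alpha+iy_{t_n(\t),\mu_n}(\alpha))=\Re H_{t_n(\t),\mu_n}(\alpha+iy_{t_n(\t),\mu_n}(\alpha))\cdot\frac{n}{t_n(\t)}-\frac{n}{t_n(\t)}\left(x_n(\t)+\frac{u}{c_2n^{2/3}}\right)$. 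Now $\Phi_{t_n(\t)}(\alpha)=\Re H_{t_n(\t),\mu_n}(\alpha+iy_{t_n(\t),\mu_n}(\alpha))$ — since $H_{t_n(\t),\mu_n}$ evaluated on the graph is real-valued, cf.\ \eqref{def:Phi} — is a strictly increasing bijection of $\R$ onto $\R$ (this is Biane's result, recalled after \eqref{def:Stieltjes}). Hence $h'(\alpha)=\frac{n}{t_n(\t)}\left(\Phi_{t_n(\t)}(\alpha)-x_n(\t)-\frac{u}{c_2n^{2/3}}\right)$ vanishes exactly when $\Phi_{t_n(\t)}(\alpha)=x_n(\t)+\frac{u}{c_2n^{2/3}}$, i.e.\ at $\alpha=\Re w_n$ by the definition \eqref{def:znwn}, is strictly negative to the left and strictly positive to the right by strict monotonicity of $\Phi_{t_n(\t)}$. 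This gives the unique local minimum and the asserted monotonicity.

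The main obstacle is the bookkeeping in part (1): one must justify that differentiating $\Re\phi_{n,\t}$ along the graph of $y_{t_n(\t),\mu_n}$ produces no contribution from the $y'$-term — this is where part (2) is used, via the fact that the graph is precisely the stationary locus in the $\beta$-direction — and one must handle regularity of $y_{t_n(\t),\mu_n}$ (it is real-analytic where positive, and the statement is about the extended real function where it may vanish on intervals corresponding to gaps in the support). Away from that, everything is an elementary sign analysis driven by monotonicity of $\beta\mapsto\int\frac{d\mu_n(s)}{(\alpha-s)^2+\beta^2}$ and strict monotonicity of $\Phi_{t_n(\t)}$; both facts are either immediate or quoted from Biane \cite{Biane}.
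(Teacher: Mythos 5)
Your proof is correct and follows essentially the same route as the paper: differentiate along the graph (part (1)) or along vertical lines (part (2)), and read off the sign from the relation $H_{t_n(\t),\mu_n}$ maps the graph of $y_{t_n(\t),\mu_n}$ bijectively to $\R$, respectively from the strict monotonicity in $\beta$ of $\int\frac{d\mu_n(s)}{(\alpha-s)^2+\beta^2}$. The only cosmetic difference is that you make explicit the Cauchy--Riemann bookkeeping and the vanishing of the $y'$-contribution in part (1) (because the graph is the critical locus in $\beta$), which the paper uses implicitly when writing the formula for $\frac{\partial}{\partial\alpha}\Re\phi_{n,\t}$.
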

\begin{proof}
	In order to see the first part of the lemma, we compute
	\[\frac{\partial}{\partial \alpha} \Re\phi_{n,\t}\left(\alpha+iy_{t_n(\t),\mu_n}(\alpha),u\right) =\frac{n}{t_n(\t)}\left(H_{t_n(\t),\mu_n}\left(\alpha+ i y_{t_n(\t),\mu_n}(\alpha)\right)-x_n(\t)-\frac{u}{c_2 n^{2/3}} \right)\]
	and use that
	\[\alpha \mapsto H_{t_n(\t),\mu_n}\left(\alpha+ i y_{t_n(\t),\mu_n}(\alpha)\right)\]
	constitutes a bijective mapping from \(\mathbb{R}\) to \(\mathbb{R}\). 
	
	For the second part we compute
	\[\frac{\partial}{\partial \beta} \Re\phi_n\left(\alpha+i\beta,v\right)=\frac{n\beta}{t_n(\t)}\left[t_n(\t) \int \frac{d\mu_n(s)}{(\alpha-s)^2+\beta^2} -1\right],\]
	which is positive for \(0<\beta < y_{t_n(\t),\mu_n}(\alpha)\), zero at \(\beta = y_{t_n(\t),\mu_n}(\alpha)\) and negative for \(\beta>y_{t_n(\t),\mu_n}(\alpha)\).
\end{proof}
With the preceding lemma at hand, we can now quantify growth and decay of the phase function when leaving the interior contours $\Gamma_{\rm in}$ and $\Sigma_{\rm in}$, respectively.
\begin{lemma}\label{lemma:auxiliaryremaining}
There is a constant $d>0$ such that for $n$ large enough the following holds uniformly for $\t_1,\t_2$ in compacts and $u,v\in[-M,Mn^\e]$:
\begin{enumerate}
\item\label{remaining2} For $z\in \Sigma,\ w\in\Gamma_{\rm out}\cup\overline{\Gamma_{\rm out}}$  we have
\begin{align}\label{remaining1}
\Re\left(\phi_{n,\t_2}(z,v)-\phi_{n,\t_1}(w,u)+f_n(\t_2,v)-f_n(\t_1,u)\right)\leq -dn^{3\e}.
\end{align}
\item  For $z\in\Sigma_{\rm out}\cup\overline{\Sigma_{\rm out}},\ w\in\Gamma_{\rm in}\cup\overline{\Gamma_{\rm in}}$ we have
\begin{align}\label{eq1_lemma_decay}
\Re\left(\phi_{n,\t_2}(z,v)-\phi_{n,\t_1}(w,u)+f_n(\t_2,v)-f_n(\t_1,u)\right)\leq -dn^{3\e}.
\end{align}
\end{enumerate}
\end{lemma}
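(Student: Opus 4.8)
The plan is to separate the real part of the phase into a $z$-contribution and a $w$-contribution, bound each one on its own contour by combining the local cubic expansion inside the disk $D$ with the global descent properties of Lemma \ref{lemma:realpartphi}, and then add the two bounds.

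\emph{Decoupling.} Carrying out the same cancellation of the large $n$- and $\t$-dependent terms that produced \eqref{calculation:phi} and \eqref{phase_function_local}, there is a real constant $c_n$, independent of $\t_1,\t_2,u,v$, with
\[
\Re\bigl(\phi_{n,\t_2}(z,v)-\phi_{n,\t_1}(w,u)+f_n(\t_2,v)-f_n(\t_1,u)\bigr)=\widetilde A(z)+\widetilde B(w),
\]
where $\widetilde A(z):=\Re\phi_{n,\t_2}(z,v)+\Re f_n(\t_2,v)-c_n$ and $\widetilde B(w):=-\Re\phi_{n,\t_1}(w,u)-\Re f_n(\t_1,u)+c_n$; both are invariant under $z\mapsto\bar z$, resp.\ $w\mapsto\bar w$. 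After the substitution $z=x^*+c_2 t_{\rm cr}\z n^{-1/3}$, $w=x^*+c_2 t_{\rm cr}\w n^{-1/3}$, reading off \eqref{phase_function_local} in two pieces gives, uniformly for $z,w\in D$, $\t_1,\t_2$ in compacts and $u,v\in[-M,Mn^\e]$,
\[
\widetilde A(z)=\Re\Bigl(\tfrac{\z^3}{3}-\t_2\z^2-v\z\Bigr)+\O(n^{-\e}),\qquad
\widetilde B(w)=\Re\Bigl(-\tfrac{\w^3}{3}+\t_1\w^2+u\w\Bigr)+\O(n^{-\e}).
\]
Since $|\t_j\z^2|,|v\z|,|u\w|=\O(n^{2\e})$ on $D$, only the cubic terms matter, and the point is that on our contour pieces they carry a sign: the argument of $\widehat z_n-x^*$ is $\pi/3$, so $\Re(\widehat\z_n^{\,3}/3)=-n^{3\e}/(3(c_2 t_{\rm cr})^3)$; the argument of $w_{1,n}-x^*$ lies in $[0,\d)$ and that of $w_{2,n}-x^*$ equals $\pi/2+\d$, and since $0<\d<\pi/6$ one has $\cos 3\theta\ge\cos 3\d>0$ for $\theta\in[0,\d)$ while $\cos\bigl(3(\tfrac\pi2+\d)\bigr)=\sin 3\d>0$, hence the images $\widehat\w$ of $w_{1,n},w_{2,n}$ satisfy $\Re(\widehat\w^{\,3})\ge c\,n^{3\e}$; finally on $\Sigma_{\rm in}$ (argument constantly $\pi/3$) and on $\Gamma_{\rm in}$ (argument in $[0,\d)$ on the vertical and horizontal pieces, and $\pi/2+\d$ on $[x^*,w_{2,n}]$) the relevant cubic terms $\Re(\z^3/3)$ and $\Re(-\w^3/3)$ are $\le 0$. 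So $\widetilde A(\widehat z_n)\le-c\,n^{3\e}$ and $\widetilde B(w_{1,n}),\widetilde B(w_{2,n})\le-c\,n^{3\e}$ for some $c>0$, while $\widetilde A\le C n^{2\e}$ on $\Sigma_{\rm in}$ and $\widetilde B\le C n^{2\e}$ on $\Gamma_{\rm in}$.

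\emph{Globalization.} On $\Sigma_{\rm out}=[\widehat z_n,\widehat z_n+i\infty]$ the real part of $z$ is fixed, and $\widehat z_n$ lies strictly above the graph of $y_{t_n(\t_2),\mu_n}$ over that real part because its argument $\pi/3$ falls in $(\d,\pi/2-\d)$, precisely the range in which the proof of Lemma \ref{lemma: saddle points} confines that graph below $\partial D$; Lemma \ref{lemma:realpartphi}(2) then gives $\widetilde A(z)\le\widetilde A(\widehat z_n)\le-c\,n^{3\e}$ on $\Sigma_{\rm out}$, hence by conjugation $\widetilde A\le C n^{2\e}$ on all of $\Sigma$ and $\widetilde A\le-c\,n^{3\e}$ on $\Sigma_{\rm out}\cup\overline{\Sigma_{\rm out}}$. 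On $\Gamma_{\rm out,1}$ (graph to the right of $w_{1,n}$) one has $\alpha\ge\Re w_{1,n}>\Re w_n$ for large $n$, since $|w_{1,n}-x^*|=n^{-1/3+\e}$ dominates $|w_n-x^*|=\O(n^{-1/3+\e/2})$ by Lemma \ref{lemma: saddle points}(2); Lemma \ref{lemma:realpartphi}(1) then gives $\widetilde B(w)\le\widetilde B(w_{1,n})\le-c\,n^{3\e}$. On $\Gamma_{\rm out,2}$ I would argue in two steps: on the vertical segment $[w_{2,n},w_{3,n}]$ the top point $w_{3,n}$ sits on the graph and is the argmax appearing in Lemma \ref{lemma:realpartphi}(2), so $\Re\phi_{n,\t_1}$ is minimized at the bottom $w_{2,n}$, whence $\widetilde B\le\widetilde B(w_{2,n})$ there and in particular $\widetilde B(w_{3,n})\le\widetilde B(w_{2,n})$; on the remaining graph-piece, where $\alpha\le\Re w_{3,n}=\Re w_{2,n}<\Re w_n$, Lemma \ref{lemma:realpartphi}(1) gives $\widetilde B(w)\le\widetilde B(w_{3,n})\le\widetilde B(w_{2,n})\le-c\,n^{3\e}$. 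Thus $\widetilde B\le-c\,n^{3\e}$ on $\Gamma_{\rm out}\cup\overline{\Gamma_{\rm out}}$ and $\widetilde B\le C n^{2\e}$ on $\Gamma_{\rm in}\cup\overline{\Gamma_{\rm in}}$.

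\emph{Conclusion and main obstacle.} Statement (1) then follows from $\widetilde A(z)+\widetilde B(w)\le C n^{2\e}-c\,n^{3\e}$ and statement (2) from $\widetilde A(z)+\widetilde B(w)\le-c\,n^{3\e}+C n^{2\e}$, both $\le-d\,n^{3\e}$ for a suitable $d>0$ and all large $n$, with the constants uniform in the stated ranges; the case $\t_2<\t_1$, in which the Gaussian term of \eqref{def:Kn} is present, is a routine additional check as noted after \eqref{def:Kn3}. I expect the only genuinely delicate point to be the left junction of $\Gamma$: the graph may exit $\partial D$ at a point $w_{\rm out}$ whose argument lies in $(\pi/2-\d,\pi/2+\d)$, a range on which $\cos 3\theta$ need not be positive, so one cannot reduce the left branch to $w_{\rm out}$ directly. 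The artificial detour $[w_{2,n},w_{3,n}]$ with $w_{2,n}$ placed at the favorable argument $\pi/2+\d$ is exactly what circumvents this, and making the detour legitimate is what forces the two-fold use of Lemma \ref{lemma:realpartphi} above, together with the verification that $w_{3,n}$, lying above $w_{2,n}$, genuinely sits on the graph so that monotonicity along the graph can be invoked at it.
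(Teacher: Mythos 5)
Your proof is correct and follows essentially the same route as the paper: establish the bound at $\Sigma_{\rm in}$ and at the boundary points $w_{1,n},w_{2,n}$ using the local cubic expansion \eqref{phase_function_local} and the sign of $\Re\z^3$, $\Re\w^3$ in the relevant sectors, then globalize along $\Sigma_{\rm out}$ and $\Gamma_{\rm out}$ by the monotonicity statements of Lemma~\ref{lemma:realpartphi}. Your explicit decoupling into $\widetilde A(z)+\widetilde B(w)$ and the detailed verification at the junction points $w_{1,n},w_{2,n},w_{3,n}$ simply spell out steps that the paper's proof leaves more implicit.
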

\begin{proof}
Without loss of generality, we restrict our attention to the contours in the upper half plane. To see part (1) of the Lemma, we argue that by \eqref{phase_function_local}, equation \eqref{remaining1} holds for $z\in\Sigma_{\rm in}$, $w\in\{w_{1,n},w_{2,n}\}$ and $n$ large enough. Indeed, $\Sigma_{\rm in}$ and $w\in \{w_{1,n},w_{2,n}\}$ lie in the sectors where $\Re \z^3<0$ and $\Re \w^3>0$, respectively. The remaining terms in \eqref{phase_function_local} are $\O(n^{2\e})$. 
To extend this to $z\in\Sigma$ and $w\in\Gamma_{\rm out}$, we observe that by Lemma \ref{lemma:realpartphi} (2), $\Re\phi_{n,\t_2}(z,v)$ decreases along $\Sigma_{\rm out}$ as we move from $\Sigma_{\rm in}$ to $\Sigma_{\rm out}$. This follows from the graph of $y_{t_n(\t_2),\mu_n}$ lying below the boundary point $\widehat{z}_n$ (see \eqref{z_n_hat} for the definition of $\widehat z_n$). On the other hand we have 
\begin{align}
\Re(-\phi_{n,\t_1}(w,u))\leq \max\lb\Re(-\phi_{n,\t_1}(w_{1,n},u)),\Re(-\phi_{n,\t_1}(w_{2,n},u))\rb,
\end{align}
since by Lemma \ref{lemma:realpartphi} (2), $\Re(\phi_{n,\t_1}(w,u))$ increases from $w_{2,n}$ to $w_{3,n}$ along $\Gamma_{\rm out}$ (as the graph of $y_{t_n(\t_1),\mu_n}$ lies above $w_{2,n}$) and further increases when following $y_{t_n(\t_1),\mu_n}$ to the left according to Lemma \ref{lemma:realpartphi} (1). By the same argument, $\Re(\phi_{n,\t_1}(w,u))$ increases when following $y_{t_n(\t_1),\mu_n}$ from $w_{1,n}$ to the right. In this reasoning, we used that the saddle point $w_n$ lies inside the disk $D$ around $x^*$ by Lemma \ref{lemma: saddle points} (2). From this we conclude part (1) of Lemma \ref{lemma:auxiliaryremaining}.
The second part (2) follows by analogous arguments. 
\end{proof}

We are now prepared to estimate the remaining terms ${K}_{n,\t_1,\t_2}^{(2)}$ and ${K}_{n,\t_1,\t_2}^{(3)}$ in \eqref{def:Kn2}-\eqref{def:Kn3}.
\begin{lemma}\label{lemma: int2}
As $n\to\infty$, we have for $j=2,3$ and some constant $D>0$ and any $\s>0$
\begin{equation}{K}_{n,\t_1,\t_2}^{(j)}(u,v)=\mathcal O(e^{-n^{D}-\s(u+v)})\label{remainder_exponential}
\end{equation}
uniformly for $\tau_1,\t_2$ in compact subsets of $\mathbb R$, and for $u,v\in [-M, Mn^{\epsilon}]$ for any $M>0$.
\end{lemma}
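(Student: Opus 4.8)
The plan is to estimate the double integrals $K^{(2)}_{n,\t_1,\t_2}$ and $K^{(3)}_{n,\t_1,\t_2}$ in \eqref{def:Kn2}--\eqref{def:Kn3} crudely by taking absolute values, the essential input being the uniform descent estimates of Lemma~\ref{lemma:auxiliaryremaining}. Since $f_n$ in \eqref{gauge_factor} is real-valued on the (real) arguments occurring here, Lemma~\ref{lemma:auxiliaryremaining} gives, for $j\in\{2,3\}$, the pointwise bound $\bigl|e^{f_n(\t_2,v)-f_n(\t_1,u)}\,e^{\phi_{n,\t_2}(z,v)-\phi_{n,\t_1}(w,u)}\bigr|\le e^{-dn^{3\e}}$ on the relevant contour pieces, uniformly for $\t_1,\t_2$ in compacts and $u,v\in[-M,Mn^{\e}]$. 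The prefactor $n^{1/3}/\bigl(c_2(2\pi i)^2\sqrt{t_n(\t_1)t_n(\t_2)}\bigr)$ is $\mathcal O(n^{1/3})$ because $t_n(\t)\to t_{\rm cr}>0$. Hence it remains only to show that, after pulling out $e^{-dn^{3\e}}$, the integral $\iint|z-w|^{-1}\,|dz|\,|dw|$ over the appropriate pieces, together with the residual decay of the integrand, contributes at most a power of $n$.

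For the Cauchy kernel I would observe that for $K^{(2)}$ the $w$-contour is $\Gamma_{\rm out}\cup\overline{\Gamma_{\rm out}}$ and for $K^{(3)}$ it is $\Gamma_{\rm in}\cup\overline{\Gamma_{\rm in}}$; in either case, comparing real or imaginary parts with the $z$-contour and using that $\d$ was chosen smaller than $\pi/6$, one obtains $|z-w|\gtrsim n^{-1/3+\e}$ on the pieces involved. For instance, on $\Sigma_{\rm in}$ and $\Sigma_{\rm out}$ one has $\Re z\in[x^*,x^*+\tfrac12 n^{-1/3+\e}]$ while $\Re w>x^*+n^{-1/3+\e}\cos\d$ on $\Gamma_{\rm out,1}$ and $\Re w<x^*$ on $\Gamma_{\rm out,2}$; an analogous comparison works for $\Gamma_{\rm in}$ against $\Sigma_{\rm out}$, using the imaginary part on the vertical piece $[w_{1,n},\Re w_{1,n}]$. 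Consequently $1/|z-w|=\mathcal O(n^{1/3})$ on the bounded portions, and it is bounded when $|z|$ is large. The contours $\Gamma_{\rm in}$ and $\Gamma_{\rm out}$ are bounded since $y_{t_n(\t),\mu_n}$ has bounded support, so the only unbounded integration is over $\Sigma_{\rm out}\cup\overline{\Sigma_{\rm out}}$; there, by Lemma~\ref{lemma:realpartphi}(2) and the fact — already used in Lemma~\ref{lemma:auxiliaryremaining} — that $\widehat z_n$ lies strictly above the graph of $y_{t_n(\t_2),\mu_n}$, the function $\Re\phi_{n,\t_2}(z,v)$ is decreasing along $\Sigma_{\rm out}$, and moreover $\Re\phi_{n,\t_2}(z,v)\le -c\,n|z|^2+\mathcal O(n\log|z|)+\mathcal O(n^{\e}|z|)$ as $|z|\to\infty$, since the quadratic term in \eqref{def:phi} dominates $ng_{\mu_n}(z)\sim n\log z$ and the $v$-dependent linear term. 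This makes the $z$-integral convergent with a tail beyond a fixed neighbourhood of $x^*$ that is super-exponentially small in $n$, so the full double integral is $\mathcal O(n^{1/3})\,e^{-dn^{3\e}}$.

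Finally I would trade $e^{-dn^{3\e}}$ for $e^{-n^{D}-\s(u+v)}$. Since $u,v\le Mn^{\e}$ we have $\s(u+v)\le 2\s Mn^{\e}$ and $u+v\ge -2M$, so for any $D<3\e$ (say $D=2\e$) and all large $n$ one has $\mathcal O(n^{1/3})\,e^{-dn^{3\e}}\le e^{-n^{D}}\,e^{-2\s Mn^{\e}}\le e^{-n^{D}}\,e^{-\s(u+v)}$, which is \eqref{remainder_exponential}. All bounds are uniform in the stated ranges of $u,v,\t_1,\t_2$ because Lemmas~\ref{lemma:realpartphi} and~\ref{lemma:auxiliaryremaining} are. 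The only mildly delicate point is handling the Cauchy singularity simultaneously with the infinite extent of $\Sigma$; but since $\Gamma$ is bounded and $\Sigma$ is separated from $\Gamma$ on the pieces relevant here, this reduces to the elementary one-dimensional estimate on $\Sigma_{\rm out}$ described above, so no genuine obstacle remains — the substantive work was already carried out in Lemma~\ref{lemma:auxiliaryremaining}.
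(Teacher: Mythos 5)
Your strategy coincides with the paper's: take absolute values, bound the Cauchy factor by $|z-w|\ge Cn^{-1/3+\e}$ on the relevant contour pairs, invoke Lemma~\ref{lemma:auxiliaryremaining} for the pointwise decay $e^{-dn^{3\e}}$, and control the unbounded part of $\Sigma$ through the quadratic growth of $-\Re\phi_{n,\t_2}$ for large $|\Im z|$. Your way of handling the tail of $\Sigma_{\rm out}$ is in fact slightly more direct than the paper's (which splits at $|z|=n$ and uses a comparison between $\phi_{n,\t_2}(z,v)$ and $\phi_{n,\t_2}(z/3,v)$), but both arguments deliver the same super-exponentially small tail, so this is a cosmetic difference.

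There is, however, one genuine gap. You state that ``$\Gamma_{\rm in}$ and $\Gamma_{\rm out}$ are bounded since $y_{t_n(\t),\mu_n}$ has bounded support,'' and then treat the $w$-integration over $\Gamma_{\rm out}$ as if it contributed at most an $\mathcal O(1)$ factor. But boundedness of a curve in the plane does not control its \emph{arclength}: $y_{t,\mu_n}$ is determined by the discrete measure $\mu_n$, and a priori its graph could oscillate on a scale $\sim 1/n$ with total variation growing with $n$. What is actually needed, and what the paper invokes, is a quantitative bound on the length of $\Gamma_{\rm out}$ (the paper cites \cite[Lemma 2.1]{ClaeysNeuschelVenker} for an $\mathcal O(n^3)$ bound, uniform for $\t_1$ in compacts). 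Any polynomial bound suffices because it is absorbed by $e^{-dn^{3\e}}$, but some such bound must be supplied — you cannot get it for free from compactness of the support. Once you add that input, your argument closes.

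A minor remark: the term $\mathcal O(n^{\e}|z|)$ in your expansion of $\Re\phi_{n,\t_2}(z,v)$ for $z\in\Sigma_{\rm out}$ does not actually appear (the shift $x_n(\t_2)+v/(c_2n^{2/3})$ is real and $\Re z$ is fixed along the vertical ray, so the cross term in $(z-a)^2$ is purely imaginary); this is harmless, as it does not affect the dominance of $-cn|z|^2$, but you should not need it.
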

\begin{proof}
To bound the double contour integral in the definition of ${K}_{n,\t_1,\t_2}^{(j)}(u,v)$, $j=2,3$, we first take absolute values and use $\lv z-w\rv\geq C n^{-1/3+\e}$ on the respective contours for some $C>0$ sufficiently small. The remaining integrals can then  be estimated using Lemma \ref{lemma:auxiliaryremaining}. For $j=2$, we split the integral over $\Sigma$ into two parts, corresponding to $\Sigma\cap\{\lv z\rv\leq n\}$ and $\Sigma\cap\{\lv z\rv> n\}$, respectively. The integral over $\Sigma\cap\{\lv z\rv\leq n\}\times \Gamma_{\rm out}$ gives by Lemma \ref{lemma:auxiliaryremaining} (1) a contribution of at most $\O(e^{-n^{3\e-\delta'}})$ for any $0<\d'<\epsilon$. Here we used that by \cite[Lemma 2.1]{ClaeysNeuschelVenker}, the length of $\Gamma_{\rm out}$ 
is $\O(n^3)$ uniformly for $\t_1$ in compacts. 

For the integral over $\Sigma\cap\{\lv z\rv> n\}\times \Gamma_{\rm out}$ we use that for $n$ large enough by Lemma \ref{lemma:auxiliaryremaining} part (1) we know 
\begin{align}\label{phase_function_out}
\Re\left(\phi_{n,\t_2}(z,v)-\phi_{n,\t_1}(w,u)+f_n(\t_2,v)-f_n(\t_1,u)\right)\leq 0
\end{align}
for $(z,w)\in\Sigma\times\Gamma_{\rm out}$. On the other hand, for sufficiently large $n$ we have

\begin{align*}
-\frac n{t_{\rm cr}}(\Im z)^2\leq\Re\phi_{n,\t_2}(z,v)=\Re\lb\frac{n}{2t_n(\t_2)} \left(z-x_n(\t_2)-\frac{v}{c_2 n^{2/3}}\right)^2 + ng_{\mu_n}(z)\rb\leq -\frac n{4t_{\rm cr}}(\Im z)^2,
\end{align*}
$z\in \Sigma\cap\{\lv z\rv>n/3\}$, $v\in[-M,Mn^\e]$, $\t_2$ in a compact set. This follows from the logarithmic growth of $g_n(z)$ as $z\to\infty$.
We conclude that for $n$ large enough we have
\begin{align}
&\Re\left(\phi_{n,\t_2}(z,v)-\phi_{n,\t_1}(w,u)+f_n(\t_2,v)-f_n(\t_1,u)\right)\\
&=\Re\left(\phi_{n,\t_2}(z/3,v)-\phi_{n,\t_1}(w,u)+f_n(\t_2,v)-f_n(\t_1,u)\right)+\Re\lb\phi_{n,\t_2}(z,v)\rb-\Re\left(\phi_{n,\t_2}(z/3,v)\right)\\
&\leq 0-\lb\frac{1}{4}-\frac19\rb\frac n{t_{\rm cr}}(\Im z)^2=-\frac {5n}{36t_{\rm cr}}(\Im z)^2.
\end{align}
This shows that the integral over $\Sigma\cap\{\lv z\rv> n\}\times \Gamma_{\rm out}$ is of order $\O(\exp({-\tilde d n^3}))$ for some $\tilde d>0$, which finishes the proof for $j=2$.

For $j=3$, assertion \eqref{remainder_exponential} follows from analogous arguments using Lemma \ref{lemma:auxiliaryremaining} (2).
\end{proof}

\subsection{Proofs of Theorems \ref{thm:main} and \ref{corollary_TW}}

\begin{proof}[Proof of Theorem \ref{thm:main}]
Theorem \ref{thm:main} follows by combining Proposition \ref{prop: int1} and Lemma \ref{lemma: int2}.
\end{proof}

\begin{proof}[Proof of Theorem \ref{corollary_TW}]
	{We start with proving assertion (2).}
Let $\t_1<\cdots<\t_m$ and $a_1,\dots,a_m$ be given. The distribution function of $\xi(\tau_1), \ldots, \xi(\tau_m)$ is given as a gap probability,
\begin{align}
&\P\left( c_2n^{2/3}(\xi(\t_1)-x_n(\t_1))\leq a_1,\dots,c_2n^{2/3}(\xi(\t_m)-x_n(\t_m))\leq a_m\right)\\
&=\P\lb \bigcap_{j=1}^m \text{no eigenvalue at time $\t_j$ in }\Big(x_n(\t_j)+\frac{a_j}{c_2n^{2/3}},x_n(\t_j)+\frac{n^\e}{n^{2/3}}\Big]\rb,\label{Fredholm_representation}
\end{align}
which in turn can be expressed, as is well-known (cf.~\cite{Johansson03}), by inclusion-exclusion as a Fredholm determinant
\begin{align}
&\det(I-\hat{\mathcal{K}}_{n,a_1,\dots,a_m})_{L^2(\{\t_1,\dots,\t_m\}\times\R,\#\otimes \lambda)}\\
&:=\sum_{k=0}^\infty \frac{(-1)^k}{k!}\sum_{i_1,\dots,i_k=1}^m\int_{\R^k}\det\left[1_{[a_{i_j},c_2n^\e]}(u_{i_{j}})\hat K_{n,\t_{i_j},\t_{i_{j'}}}(u_{i_j},u_{i_{j'}})1_{[a_{i_{j'}},c_2n^\e]}(u_{i_{j'}})\right]_{1\leq j,j'\leq k}du_{i_1}\dots du_{i_k}.\label{Fredholm_explicit}
\end{align}
Here we recall that $\#$ and $\lambda$ are counting and Lebesgue measure, respectively, and
\begin{align*}
\hat K_{n,\t,\t'}(u,u'):=\frac{1}{c_2n^{2/3}}\tilde K_{n,t_{n}^{\rm Ai}(\t),t_{n}^{\rm Ai}(\t')}\left(x^{\Ai}_n(\t)+\frac{u}{c_2n^{2/3}}, x^{\Ai}_n(\t')+\frac{u'}{c_2n^{2/3}}\right)
\end{align*}
is the rescaled kernel of Theorem \ref{thm:main}. Finally, $\hat{\mathcal{K}}_{n,a_1,\dots,a_m}$ is the integral operator on $L^2(\{\t_1,\dots,\t_m\}\times\R,\#\otimes \lambda)$ with kernel
\begin{align}
r_n(\t,u)\hat K_{n,\t,\t'}(u,u')r_n(\t',u'),
\end{align}
where $r_n(\t_j,u):=1_{[a_j,c_2n^\e]}(u)$, $j=1,\dots,m$.

In view of \eqref{Airy_2_def}, to prove the theorem, we need to show convergence of a sequence of Fredholm determinants to another Fredholm determinant. To this end, we use a convenient comparison result from \cite{AGZ} which we first state in (almost) full generality:
let $X$ be a locally compact Polish space and $\nu$ a finite measure with total mass $\|\nu\|_1$ on $(X,\mathcal B(X))$, where $\mathcal B(X)$ is the Borel $\sigma$-algebra of $X$. Let $K_1,K_2:X\times X\to\R$ be uniformly bounded. Then the Fredholm determinants $\Delta(\mathcal K_i):=\det(I-\mathcal K_i)_{L^2(X,\nu)},\ i=1,2$ exist, where $\mathcal K_i$ denotes the integral operator with kernel $K_i$, $i=1,2$, and we have the inequality \cite[Lemma 3.4.5]{AGZ}
\begin{align}\label{Fredholm_comparison}
\lv\Delta(\mathcal K_1)-\Delta(\mathcal K_2)\rv\leq \lb\sum_{k=1}^\infty\frac{k^{1+k/2}\|\nu\|_1^k\max(\|K_1\|_\infty,\|K_2\|_\infty)^{k-1}}{k!}\rb\|K_1-K_2\|_\infty, \qquad
\end{align}
where $\|\cdot\|_\infty$ denotes the sup norm on $X\times X$.

In our application of \eqref{Fredholm_comparison}, we choose $X:=\{\t_1,\dots,\t_m\}\times[\min_{j=1}^m a_j,\infty)$. On $\{\t_1,\dots,\t_m\}$ we choose the counting measure $\#$ as before but to overcome the non-finiteness of the Lebesgue measure $\lambda$, we use the fast decay of the kernels $\hat{K}_{n,\t,\t'}$ and its limit. Define $\hat \lambda(du):=e^{-2u}\lambda(du)$, $\nu:=\#\otimes\hat\lambda$ and
\begin{align}
&K_1((\t,u),(\t',u')):=K_{1,n}((\t,u),(\t',u')):=r_n(\t,u)\hat K_{n,\t,\t'}(u,u')e^{u+u'}r_n(\t',u'),\quad\label{def_K_1}\\
&K_2((\t,u),(\t',u')):=r(\t,u)\mathbb K^{\rm Ai}_{\t,\t'}(u,u')e^{u+u'}r(\t',u'),\label{def_K_2}
\end{align}
where $r(\t,u) =$ has been defined following \eqref{def_Fredholm_limit}. It follows from the explicit representation of Fredholm determinants (see \eqref{Fredholm_explicit}) that 
\begin{align*}
&\det(I-\hat{\mathcal{K}}_{n,a_1,\dots,a_m})_{L^2(\{\t_1,\dots,\t_m\}\times\R,\#\otimes \lambda)}=\Delta(\mathcal K_1),\\
&\det(I-\mathcal K^{\rm Ai}_{a_1,\dots,a_m} )_{L^2(\{\t_1,\dots,\t_m\}\times\R,\#\otimes \lambda)}=\Delta(\mathcal K_2).
\end{align*} 
Now, it follows from the arguments at the end of the proof of Proposition \ref{prop: int1} (beginning with \eqref{K_n^1_second_last}) that $K_1$ and $K_2$ are uniformly bounded  in $n$ on $X\times X$.
	By Theorem \ref{thm:main}, we have
	\begin{align*}
	\lv K_1((\t,u),(\t',u'))-K_2((\t,u),(\t',u'))\rv={\O(n^{-\e})},\quad n\to\infty,
	\end{align*} 
	for $\t,\t'\in\{\t_1,\dots,\t_m\}$ and $u,u'\in[\min_{j=1}^ma_j,c_2n^{\e}]$. If at least one of $u$ or $u'$ is larger than $c_2n^\e$, then $K_1((\t,u),(\t',u'))=0$ and the exponential decay of $K_{\t,\t'}^\Ai(u,u')$ {(which is implied by the boundedness of $K_2$)}
	 shows
	\begin{align}
	\lv K_1((\t,u),(\t',u'))-K_2((\t,u),(\t',u'))\rv=\O\lb\frac{e^{-d(u+u')}}{n^\e}\rb\label{comparison_kernels}
	\end{align}
	for some $d>0$. Summarizing, we get $\|K_1-K_2\|_\infty={\O(n^{-\e})}$, which together with \eqref{Fredholm_comparison} finishes the proof of part (2).
	
	For part (1), we will show that for $n$ large enough
	\begin{align}
	&\left\lv\P\lb \bigcap_{j=1}^m \text{no eigenvalue at time $\t_j$ in }\left[ x_n^\Ai(\t_j)+n^{\e'-\frac23},x_n^\Ai(\t_j)+n^{\e-\frac23}\right]\rb-1\right\rv\\
	&\leq\lv\Delta(\mathcal K_1)-\Delta(\mathcal K_2)\rv+\lv\Delta(\mathcal K_2)-\Delta(\mathcal K_3)\rv\leq e^{-n^{\d}},\label{Fredholm_inequalities}
	\end{align}
with $\e, \e'$ as in the statement of the theorem, where $\mathcal K_1,\mathcal K_2$ are the integral operators on $L^2(X,\#\otimes\hat\lambda)$ with  kernels \eqref{def_K_1} and \eqref{def_K_2} on $L^2(X,\#\otimes\hat\lambda)$ as above, but now with $r_n(\t,u):=r(\t,u):=1_{[c_2n^{\e'},c_2n^\e]}(u)$ on $X:=\{\t_1,\dots,\t_m\}\times[c_2n^{\e'},\infty)$, and $\mathcal K_3$ is the integral operator (on the same space) with kernel $K_3\equiv0$.
The first inequality in \eqref{Fredholm_inequalities} follows from the representation \eqref{Fredholm_representation} and $\Delta(\mathcal K_3)=1$. Using \eqref{Fredholm_comparison} and \eqref{comparison_kernels}, we find $\lv\Delta(\mathcal K_1)-\Delta(\mathcal K_2)\rv=\O(e^{-n^{\d'}})$ as $n\to\infty$ for some $\d'>0$. Now, \eqref{Fredholm_comparison} once more and the exponential decay of the extended Airy kernel give $\lv\Delta(\mathcal K_2)-\Delta(\mathcal K_3)\rv=\O(e^{-n^{\d'}})$ as $n\to\infty$, which yields the result for any $0<\d<\d'$.
\end{proof}

\section{The Pearcey Case: Proof of Theorem \ref{thm:main_Pearcey}}
With our understanding of the Airy case in Section \ref{sec_Airy} at hand, we can now give an efficient treatment of the Pearcey case. We follow the path outlined in Section \ref{sec_Airy}, highlighting the differences in the analysis without duplicating too many analogous arguments. {Throughout this section, $\e$ may be any number with $0<\e<\min\lb\frac{\k-3}{8(\k+1)},\frac1{24}\rb$. }
\subsection{Analysis of the Saddle Points}
We keep the same notation as in Section \ref{sec_Airy} with a number of adaptions to the Pearcey case. For instance, we will start with the analysis  of the saddle points of the functions $z\mapsto\phi_{n,\t_2}(z,v)$ and $w\mapsto-\phi_{n,\t_1}(w,u)$, where we now set
	\begin{align}\label{def:phi_Pearcey}
	\phi_{n,\t}(z,v):= \frac{n}{2t_n(\t)} \left(z-x_n(\t)-\frac{v}{c_3 n^{3/4}}\right)^2 + ng_{\mu_n}(z),
	\end{align}
	$t_n(\t)$ is short for $t_n^{\rm P}(\t)$ and $x_n(\t)$ stands for $x_n^{\rm P}(\t)$, see \eqref{def:tnP} and the equation below.
Necessary information on the location of the saddle points is given in the following lemma, which is a companion of Lemma \ref{lemma: saddle points}.

\begin{lemma}\label{lemma: saddle points_Pearcey}
	Suppose that Assumptions 1--3 hold, $G_2=0$ and $\kappa>3$. 
	\begin{enumerate}
		\item \label{expansionH2} We have uniformly for  $|z-x^*|= \mathcal{O}\left(n^{-\frac{1}{4}+\epsilon}\right)$ and $\t$ in compacts, 
		\[H_{t_{n}(\t),\mu_n}(z)=x_n(\t)+\frac{t_{n}(\t)G_3}{6} (z-x^*)^3+\mathcal O\left(n^{-\frac{3}{4}+\epsilon}\right),\] 
		as $n\to\infty$.
		\item We have
		\begin{align*} 
	z_n(\t_2,v)&=x^{*}+\mathcal{O}\left(n^{-1/4+\epsilon/3}\right), \\
		w_n(\t_1,u)&= x^{*}+\mathcal{O}\left(n^{-1/4+\epsilon/3}\right),
		\end{align*}
 where the implied constants are uniform for \(\vert u\vert,\lv v\rv\leq K n^{\epsilon}\) for any constant \(K >0\) and $\t_1,\t_2$ in compacts as $n\to\infty$.
	\end{enumerate}
\end{lemma}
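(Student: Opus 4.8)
The plan is to run the argument of Lemma~\ref{lemma: saddle points} almost verbatim, the only structural change being that the second--order analysis there is replaced by a third--order one: since $G_2=0$ and $G_3<0$, the cubic term $\frac{t_n(\t)G_3}{6}(z-x^*)^3$ now plays the role that $\frac{t_n(\t)G_2}{2}(z-x^*)^2$ played in the Airy case, and Proposition~\ref{prop: Stieltjes comparison 2}(1) is replaced by Proposition~\ref{prop: Stieltjes comparison 2}(2), which is available precisely because $\k>3$ and $0<\e<\min\lb\frac{\k-3}{8(\k+1)},\frac1{24}\rb$.

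For part~(1), I would write $H_{t_n(\t),\mu_n}(z)=H_{t_n(\t),\mu_n}(x^*)+(z-x^*)+t_n(\t)\lb G_{\mu_n}(z)-G_{\mu_n}(x^*)\rb$ and insert the expansion \eqref{eq:expansionG_mun}, which is legitimate for $|z-x^*|=\O(n^{-1/4+\e})$. Using $G_2=0$, the identity $H_{t_n(\t),\mu_n}(x^*)=x^*+t_n(\t)G_{\mu_n}(x^*)=x_n(\t)+\O(n^{-3/4-2\e})$ (recall $x_n(\t)=x_n^{\rm P}(\t)=x^*+t_n(\t)G_0$), and $t_n(\t)G_1=-t_n(\t)/t_{\rm cr}=-1+\O(n^{-1/2})$ (because $t_n^{\rm P}(\t)=t_{\rm cr}+\frac{\t}{c_3^2n^{1/2}}$), one obtains
\[H_{t_n(\t),\mu_n}(z)=x_n(\t)+\lb1+t_n(\t)G_1\rb(z-x^*)+\frac{t_n(\t)G_3}{6}(z-x^*)^3+\O(n^{-3/4-2\e}),\]
in which the linear term is $\O(n^{-1/2})\cdot\O(n^{-1/4+\e})=\O(n^{-3/4+\e})$ and dominates the remaining error; this proves part~(1).

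For part~(2), I would transcribe the geometric argument of Lemma~\ref{lemma: saddle points}(2) to the disk $D$ of radius $n^{-1/4+\e}$ around $x^*$. Writing a boundary point as $x+iy=x^*+n^{-1/4+\e}e^{i\theta}$ and using \eqref{eq:expansionG_mun} with $G_2=0$ yields $\Im G_{\mu_n}(x+iy)=G_1y+\frac{G_3}{6}n^{-3/4+3\e}\sin(3\theta)+\O(n^{-3/4-2\e})$, hence, with $y=n^{-1/4+\e}\sin\theta$, $-G_1=1/t_{\rm cr}$ and $\sin(3\theta)/\sin\theta=3-4\sin^2\theta$,
\[-\frac{\Im G_{\mu_n}(x+iy)}{y}=\frac1{t_{\rm cr}}-\frac{G_3}{6}\,(3-4\sin^2\theta)\,n^{-1/2+2\e}\lb1+\O(n^{-5\e})\rb.\]
Since $-G_3>0$, since $3-4\sin^2\theta>0$ on $(0,\pi/3)\cup(2\pi/3,\pi)$ and $<0$ on $(\pi/3,2\pi/3)$, and since $n^{-1/2+2\e}$ dominates the $\O(n^{-1/2})$ gap between $1/t_{\rm cr}$ and $1/t_n(\t)$, the defining relation \eqref{def:y} together with $-\Im G_{\mu_n}(x+iy)/y=\int\frac{d\mu_n(s)}{(x-s)^2+y^2}$ shows that, for $n$ large and any fixed small $\d>0$, the graph of $y_{t_n(\t),\mu_n}$ lies outside $D$ over the arcs $\theta\in(\d,\pi/3-\d)$ and $\theta\in(2\pi/3+\d,\pi-\d)$ and inside $D$ over $\theta\in(\pi/3+\d,2\pi/3-\d)$. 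Hence the graph enters $D$ at a point $w_{\rm in}=x^*+n^{-1/4+\e}e^{i\theta_1}$ with $\theta_1\to\pi/3$, leaves it at $w_{\rm out}=x^*+n^{-1/4+\e}e^{i\theta_2}$ with $\theta_2\to2\pi/3$, and stays in the interior in between (being close to the real axis near $x^*$). Setting $x_{\rm in,out}:=H_{t_n(\t),\mu_n}(w_{\rm in,out})\in\R$, part~(1) gives $x_{\rm in,out}=x_n(\t)+\frac{t_n(\t)G_3}{6}n^{-3/4+3\e}e^{3i\theta_{1,2}}+\O(n^{-3/4+\e})$ with $e^{3i\theta_1}\to-1$, $e^{3i\theta_2}\to1$, so for $n$ large one has $x_{\rm out}<x_n(\t)<x_{\rm in}$ with $x_n(\t)-x_{\rm out}\sim k_0n^{-3/4+3\e}$, $x_{\rm in}-x_n(\t)\sim k_1n^{-3/4+3\e}$ and $x_{\rm in}-x_{\rm out}>k_2n^{-3/4+3\e}$ for positive constants $k_0,k_1,k_2$. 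Then $F_{t_n(\t),\mu_n}$ maps $[x_{\rm out},x_{\rm in}]$ bijectively onto the arc of the graph between $w_{\rm out}$ and $w_{\rm in}$, and inverting $x=x_n(\t)+\frac{t_n(\t)G_3}{6}(z-x^*)^3+\O(n^{-3/4+\e})$ with the appropriate branch of the cube root gives $z=x^*+\lb\frac{6}{t_n(\t)G_3}(x-x_n(\t))+\O(n^{-3/4+\e})\rb^{1/3}$; evaluating at $x=x_n(\t_2)+\frac{v}{c_3n^{3/4}}$ and at $x=x_n(\t_1)+\frac{u}{c_3n^{3/4}}$ with $\lv u\rv,\lv v\rv\le Kn^\e$ (which lie in $[x_{\rm out},x_{\rm in}]$ for $n$ large since $n^{-3/4+\e}\ll n^{-3/4+3\e}$) and using that an $\O(n^{-3/4+\e})$ quantity under the cube root produces one of size $\O(n^{-1/4+\e/3})$, one obtains $z_n(\t_2,v)=x^*+\O(n^{-1/4+\e/3})$ and $w_n(\t_1,u)=x^*+\O(n^{-1/4+\e/3})$.

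The only genuinely new ingredient compared with Lemma~\ref{lemma: saddle points} is the cube--root extraction in the last step (replacing the square root there); apart from that the proof is bookkeeping, and the one point that must be checked with care is that every error term --- the $\O(n^{-3/4-2\e})$ from \eqref{eq:expansionG_mun}, the $\O(n^{-3/4+\e})$ from the linear term $\lb1+t_n(\t)G_1\rb(z-x^*)$, and the $\O(n^{-1/2})$ discrepancy between $1/t_n(\t)$ and $1/t_{\rm cr}$ --- remains strictly subdominant to the leading cubic contribution on the scale $\lv z-x^*\rv=\O(n^{-1/4+\e})$, which is exactly what $\k>3$ together with $\e<\min\lb\frac{\k-3}{8(\k+1)},\frac1{24}\rb$ ensures.
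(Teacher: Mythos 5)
Your proof is correct and follows essentially the same route as the paper: part~(1) is verbatim the paper's computation (insert \eqref{eq:expansionG_mun}, use $G_2=0$, $H_{t_n(\t),\mu_n}(x^*)=x_n(\t)+\O(n^{-3/4-2\e})$ and $t_n(\t)G_1=-1+\O(n^{-1/2})$), and for part~(2) you reproduce the paper's angular analysis on the circle of radius $n^{-1/4+\e}$ with $\sin(3\theta)/\sin\theta$ in place of $\cos\theta$, identifying the three arcs $(\d,\pi/3-\d)$, $(\pi/3+\d,2\pi/3-\d)$, $(2\pi/3+\d,\pi-\d)$ exactly as the paper does, and then fill in the cube-root inversion step that the paper leaves implicit by referring to Lemma~\ref{lemma: saddle points}. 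The only cosmetic differences are that you track a sharper $\O(n^{-5\e})$ relative error where the paper writes $\O(n^{-\e})$, and you keep $1/t_{\rm cr}$ on the right-hand side and argue separately that the $\O(n^{-1/2})$ gap to $1/t_n(\t)$ is dominated by the $\Theta(n^{-1/2+2\e})$ main correction, whereas the paper absorbs that gap into the relative error from the outset; both are valid bookkeeping.
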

\begin{proof} First we note that all $\O$-terms will be uniform for $\t$ in compacts as $n\to\infty$.
	
		For part (1), we note that by Proposition \ref{prop: Stieltjes comparison 2} (2), 
		we obtain	for $|z-x^*|= \mathcal{O}\left(n^{-\frac{1}{4}+\epsilon}\right)$
		\[H_{t_n(\t),\mu_n}(z) 
		=x_n(\t)+\left(1+t_{n}(\t) G_1\right)(z-x^{*})+\frac{t_{n}(\t)G_3}{6} (z-x^*)^3+\O\left(n^{-\frac{3}{4}-2\e}\right),	
		\]
		and this implies the result by $t_n(\t)G_1=-1+\O(n^{-1/2})$.

		For part (2), we use 
		Proposition \ref{prop: Stieltjes comparison 2} (2)  to conclude that with $x+iy=x^*+n^{-1/4+\epsilon}e^{i\theta}$ we have
		\begin{align*}
		\Im G_{\mu_n}(x+iy) =& \left(-\frac{1}{t_{n}(\tau)}+\mathcal{O}(n^{-1/2})\right)y +\frac{G_3}{6} y n^{-1/2+2\epsilon} \frac{\sin(3\theta)}{\sin(\theta)}+ \O\left(n^{-3/4-2\e}\right),
		\end{align*}
		as \(n \to \infty\). Hence, we obtain for large \(n\) using $G_3<0$
		\[-\frac{\Im G_{\mu_n}(x+iy)}{y} = \frac{1}{t_n(\tau)}-\frac{G_3\sin(3\theta)}{6\sin(\theta)} n^{-1/2+2\epsilon}\left(1+\mathcal{O}\left(n^{-\epsilon}\right)\right) > \frac{1}{t_n(\tau)},\]
		uniformly in \(\delta < \theta < \frac{\pi}{3}-\delta\) for any $0<\d<\pi/3$, a condition on $\d$ which we will assume from now on, and $n$ sufficiently large (which depends on $\d$). In a similar way, we obtain for angles \( \frac{2\pi}{3}+\delta < \theta < \pi-\delta\) and large \(n\)
		\[-\frac{\Im G_{\mu_n}(x+iy)}{y} > \frac{1}{t_n(\tau)}.\]
		For angles \( \frac{\pi}{3}+\delta < \theta < \frac{2\pi}{3} -\delta\) and large \(n\) on the other hand, we obtain
		\[-\frac{\Im G_{\mu_n}(x+iy)}{y} < \frac{1}{t_n(\tau)}.\]
		By definition of the function \(y_{t_n(\t),\mu_n}(x)\), this  implies that its graph 
		lies inside the circle of radius $n^{-1/4+\epsilon}$ around $x^*$ for \[x\in\left(x^*-n^{-1/4+\epsilon}\cos\left(\frac{\pi}{3}+\delta\right), x^*+n^{-1/4+\epsilon}\cos\left(\frac{\pi}{3}+\delta\right)
		\right)   
		\]
		and outside this circle for 
		\[x< x^*-n^{-1/4+\epsilon}\cos\left(\frac{\pi}{3}-\delta\right)\quad \mbox{and for}\quad x>x^*+n^{-1/4+\epsilon}\cos\left(\frac{\pi}{3}-\delta\right),\]
		if $n$ is sufficiently large.
		Thus we can define points 
				\[x_{\rm in,out}= H_{t_n(\t),\mu_n}\left(x^{*}+n^{-1/4 +\epsilon} e^{i \theta_{1,2}}\right) \in \mathbb{R},\]
		such that \[\pi/3-\delta<\theta_1<\pi/3+\delta,\qquad 2\pi/3-\delta<\theta_2<2\pi/3+\delta,\]
		and such that the part of the graph of $y_{t_n(\t),\mu_n}$ between those two points lies entirely inside the circle. The rest of the argument is similar as in the proof of Lemma \ref{lemma: saddle points}, and we omit the details.
	\end{proof}

\subsection{Choice of Contours}
The contour $\Sigma$ will be chosen as 
\begin{align}
\Sigma:=x^*+i\R.
\end{align}
For the choice of the contour $\Gamma$, recall from the proof of Lemma \ref{lemma: saddle points_Pearcey} that the graph of $x\mapsto y_{t_n(\t_1),\mu_n}(x)$ enters the disk around $x^*$ of radius $n^{-1/4+\e}$ (coming from the right) at a point $w_{1,n}$, about which we know that $\pi/3-\d<\arg(w_{1,n}-x^*)<\pi/3+\d$ for any $0<\d<\pi/3$, provided $n$ is sufficiently large. We will choose a fixed $0<\d<\frac{\pi}{24}$ in order to be able to use later on that $w_{1,n}$ lies in a sector where $w\mapsto\Re(w^4)$ is negative. Likewise, $y_{t_n(\t_1),\mu_n}$ leaves the disk at a point $w_{2,n}$, about which we know that $2\pi/3-\d<\arg(w_{2,n}-x^*)<2\pi/3+\d$ for $n$ large enough (with the same $\d$). Moreover, by construction, the part of the graph between the points $w_{1,n}$ and $w_{2,n}$ lies entirely inside the disk.

We now choose $\Gamma$ as depicted in Figure \ref{fig:contourGamma_P}.
\begin{figure}[h]
	\centering
	\footnotesize
	\def\svgwidth{0.7\columnwidth}
	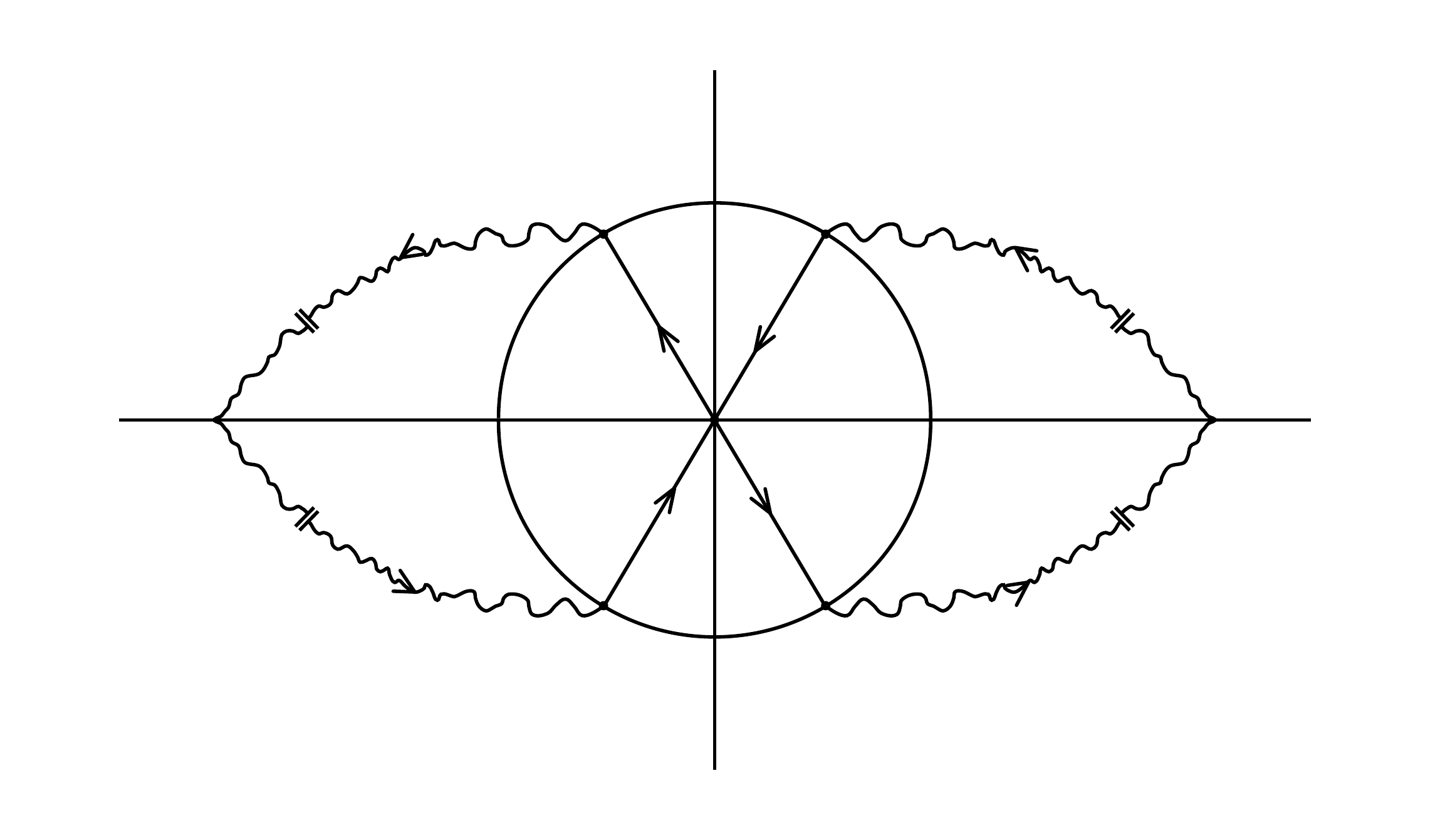
	\caption{Choice of contour $\Gamma$}\label{fig:contourGamma_P}
\end{figure}
Recall that $y_{t_n(\t_1),\mu_n}$ is of bounded support which contains all initial points $X_1(0),\dots,X_n(0)$. 
We first follow the graph of $y_{t_n(\t_1),\mu_n}$ from the right-most support point to $w_{1,n}$. From $w_{1,n}$ we take the straight line to $x^*$, then the straight line from $x^*$ to $w_{2,n}$ and from there we follow the graph of $y_{t_n(\t_1),\mu_n}$ to its left-most support point. We close the contour by joining it with its complex conjugate. We will, similarly to the Airy case, denote the part of $\Gamma$ inside of the disk as $\Gamma_{\rm in}$, i.e.
\begin{align}
\Gamma_{\rm in}:=[w_{1,n},x^*]\cup[x^*,w_{2,n}]\cup\overline{[w_{1,n},x^*]}\cup\overline{[x^*,w_{2,n}]}.
\end{align}
The remaining part of $\Gamma$ will be denoted by $\Gamma_{\rm out}$. Similarly, we define $\Sigma_{\rm in}:=[x^*-in^{-1/4+\e},x^*+in^{-1/4+\e}]$ and $\Sigma_{\rm out}:=\Sigma\setminus\Sigma_{\rm in}$.

From \eqref{def:Kn} and \eqref{conjugation}, and by a similar argument as for \eqref{rescaledkernel}, we have 
\begin{multline}
\frac{1}{c_3 n^{3/4}} \tilde{K}_{n,t_n(\t_1),t_n(\t_2)}\left(x_n(\t_1)+\frac{u}{c_3 n^{3/4}}, x_n(\t_2)+\frac{v}{c_3 n^{3/4}}\right)\\\label{rescaledkernel_Pearcey}
=  \frac{n^{1/4}e^{f_n(\t_2,v)-f_n(\t_1,u)}}{c_3 (2\pi i)^2 \sqrt{t_n(\t_1)t_n(\t_2)}} \int_\Sigma dz \int_{\Gamma} dw ~ \frac{e^{\phi_{n,\t_2} (z,v) - \phi_{n,\t_1} (w,u)}}{z-w}, 
\end{multline}
where $\phi_{n,\t}$ has been defined in \eqref{def:phi_Pearcey} and $f_n(\t,u)$ now stands for 
\[f_n\lb t_n(\t),x_n(\t)+\frac{u}{c_3n^{3/4}}\rb
\]
with $f_n$ from \eqref{gauge_factor}. }

We will again only consider the case of $\t_2\geq\t_1$, thus concentrating on the integral part of the correlation kernel in \eqref{def:Kn}, which we split into
\begin{align}
&\label{def:Kn1_P}{K}_{n,\t_1,\t_2}^{(1)}(u,v):=\frac{n^{1/4}e^{f_n(\t_2,v)-f_n(\t_1,u)}}{c_3 (2\pi i)^2 \sqrt{t_n(\t_1)t_n(\t_2)}} \int_{\Sigma_{\rm in}} dz \int_{\Gamma_{\rm in}} dw ~ \frac{e^{\phi_{n,\t_2} (z,v) - \phi_{n,\t_1} (w,u)}}{z-w},\\
&\label{def:Kn2_P}{K}_{n,\t_1,\t_2}^{(2)}(u,v):=\frac{n^{1/4}e^{f_n(\t_2,v)-f_n(\t_1,u)}}{c_3 (2\pi i)^2 \sqrt{t_n(\t_1)t_n(\t_2)}} \int_{\Sigma} dz \int_{\Gamma_{\rm out}} dw ~ \frac{e^{\phi_{n,\t_2} (z,v) - \phi_{n,\t_1} (w,u)}}{z-w},\\
&\label{def:Kn3_P}{K}_{n,\t_1,\t_2}^{(3)}(u,v):=\frac{n^{1/4}e^{f_n(\t_2,v)-f_n(\t_1,u)}}{c_3 (2\pi i)^2 \sqrt{t_n(\t_1)t_n(\t_2)}} \int_{\Sigma_{\rm out}} dz \int_{\Gamma_{\rm in}} dw ~ \frac{e^{\phi_{n,\t_2} (z,v) - \phi_{n,\t_1} (w,u)}}{z-w}.
\end{align}

\subsection{Proof of Theorem \ref{thm:main_Pearcey}}
\begin{proof}[Proof of Theorem \ref{thm:main_Pearcey}]
	As usual, all error terms will be uniform for $\t_1,\t_2$ in compacts as $n\to\infty$.
	We choose the parametrization
 \begin{align}
z=:x^{*}+\frac{c_3t_{\rm cr}\zeta}{n^{1/4}}\quad \text{ and } \quad w=:x^{*}+\frac{c_3t_{\rm cr}\w}{n^{1/4}}.\label{change_of_variables_Pearcey}
\end{align} This yields with Proposition \ref{prop: Stieltjes comparison 2} part (2) (recall $G_2=0$) for $z$ inside the disk
\begin{align}
&ng_{\mu_n}(z)=ng_{\mu_n}(x^*)+n^{3/4}G_0c_3t_{\rm cr}\zeta+\frac{n^{1/2}G_1c_3^2t_{\rm cr}^2\z^2}2+\frac{G_3c_3^4t_{\rm cr}^4\z^4}{24}+\O\lb\frac{\lv\zeta\rv}{n^{2\e}}\rb.\label{calculation:g_Pearcey}
\end{align} 
The analog of \eqref{calculation:phi} then reads 
\begin{align}
\phi_{n,\t_2}(z,v)=-\frac{\z^4}4-\frac{\z^2\t_2}2-\z v+\frac{nt_n(\t_2)G_0^2}2+\frac{n^{1/4}G_0v}{c_3}+ng_{\mu_n}(x^*)+\O\lb \frac{1}{n^{\e}}\rb \label{calculation:phi_P}
\end{align}
with the $\O$-terms in the two above equations uniform for $\lv\zeta\rv\leq Kn^{\e}$ and \(\lv v\rv\leq K\) for any absolute constant \(K>0\). Moreover, we have
\begin{align}
&\phi_{n,\t_2}(z,v)-\phi_{n,\t_1}(w,u)+f_n(\t_2,v)-f_n(\t_1,u)\\
&=-\frac{\z^4}4-\frac{\t_2\z^2}2-v\z+\frac{\w^4}4+\frac{\t_1\w^2}2+u\w +\O\lb \frac{1}{n^{\e}}\rb,\label{phase_function_local_P}
\end{align}
the $\O$-term being uniform for $\lv\zeta\rv,\lv \w\rv\leq Kn^{\e}$ and \(\lv v\rv,\lv u\rv\leq K\).
For the constant in front of the double integral we get
\begin{align}
\frac{t_{\rm cr}}{(2\pi i)^2\sqrt{t_n(\t_1)t_n(\t_2)}}=\frac{1}{(2\pi i)^2}+\O\lb \frac{ 1}{n^{1/2}}\rb.
\end{align}
Furthermore, there is a function $h_n(\t_1,\t_2,\z,\w,u,v)=\O\lb n^{-\e}\rb$ such that 
\begin{align}
&{K}_{n,\t_1,\t_2}^{(1)}(u,v)\\
&=\lb\frac{1}{(2\pi i)^2}+\O\lb \frac{ 1}{n^{1/2}}\rb\rb\int_{\widehat\Sigma} d\z \int_{\widehat\Gamma} d\w ~ \frac{e^{-\frac{\z^4}4-\frac{\t_2\z^2}2-v\z+\frac{\w^4}4+\frac{\t_1\w^2}2+u\w+h_n(\t_1,\t_2,\z,\w,u,v)}}{\z-\w},\label{K_n_1_beginning_P}
\end{align}
where $\widehat\Sigma:=\left[-\frac{in^{\e}}{c_3t_{\rm cr}},\frac{in^{\e}}{c_3t_{\rm cr}}\right]$ and
\begin{align}
\widehat\Gamma:=\left[\frac{(w_{1,n}-x^*)n^{1/4}}{c_3 t_{\rm cr}},0\right]\cup\left[0,\frac{(w_{2,n}-x^*)n^{1/4}}{c_3 t_{\rm cr}}\right]\cup\left[\frac{(\overline{w_{2,n}}-x^*)n^{1/4}}{c_3 t_{\rm cr}},0\right]\cup\left[0,\frac{(\overline{w_{1,n}}-x^*)n^{1/4}}{c_3 t_{\rm cr}}\right].
\end{align}
From here, we can show analogously to the proof of Proposition \ref{prop: int1} (from~\eqref{K_n^1_second_last} until the end of that proof) that
\begin{align}
{K}_{n,\t_1,\t_2}^{(1)}(u,v)=\mathbb K^{\rm P}_{\t_1,\t_2}(u,v)+\O(n^{-\e})
\end{align}
with the $\O$-term being uniform for $u,v\in[-M,M]$ for any fixed $M>0$. Here we use in particular that by our choice of $\e,\d$, the points $\frac{(w_{j,n}-x^*)n^{1/4}}{c_3 t_{\rm cr}},\ j=1,2$, and their complex conjugates lie in the sectors where $w\mapsto\Re w^4$ is negative, allowing to extend and deform the contour $\widehat \Gamma$ to $\Gamma^{\rm P}$ from Theorem \ref{thm:main_Pearcey} at the expense of an exponentially small error.

To estimate $K_{n,\t_1,\t_2}^{(j)}$, $j=2,3$, we first note that in analogy to Lemma \ref{lemma:auxiliaryremaining} we have for some $d>0$ and $z\in \Sigma,\ w\in\Gamma_{\rm out}$ 
\begin{align}\label{remaining1_P}
\Re\left(\phi_{n,\t_2}(z,v)-\phi_{n,\t_1}(w,u)+f_n(\t_2,v)-f_n(\t_1,u)\right)\leq -dn^{4\e}.
\end{align}
  For $z\in\Sigma_{\rm out}\cup\overline{\Sigma_{\rm out}},\ w\in\Gamma_{\rm in}\cup\overline{\Gamma_{\rm in}}$ we have 
\begin{align}\label{eq1_lemma_decay_P}
\Re\left(\phi_{n,\t_2}(z,v)-\phi_{n,\t_1}(w,u)+f_n(\t_2,v)-f_n(\t_1,u)\right)\leq -dn^{4\e}.
\end{align}
The proof of \eqref{remaining1_P} and \eqref{eq1_lemma_decay_P} is analogous to the proof of Lemma \ref{lemma:auxiliaryremaining}. For instance, for \eqref{remaining1_P} it suffices to note that by \eqref{phase_function_local_P}, \eqref{remaining1_P} holds for $z\in\Sigma_{\rm in}$ and $w=w_{i,n},\,i=1,2$. From there it can be extended to $\Sigma_{\rm out}$ using that, according  to Lemma \ref{lemma:realpartphi}, the real part of $\phi_{n,\t_2}(z,v)$ decreases with $\lv z\rv\to\infty$, $z\in\Sigma_{\rm out}$. Here we use that the graph of $y_{t_n(\t_2),\mu_n}(x)$ lies in the disk between $w_{1,n}$ and $w_{2,n}$ and thus $\Sigma_{\rm out}$ lies above $y_{t_n(\t_2),\mu_n}$ in the upper half plane and below $\overline{y_{t_n(\t_2),\mu_n}}$ in the lower half plane, respectively. The extension from $w_{i,n}$ to $w\in\Gamma_{\rm out}$ uses the increase of $\phi_{n,\t_1}(w,u)$ along $y_{t_n(\t_1),\mu_n}$ (and $\overline{y_{t_n(\t_1),\mu_n}}$) as $w$ moves further away from the saddle point $w_n$, which lies inside the disk by Lemma \ref{lemma: saddle points_Pearcey}. Proving \eqref{eq1_lemma_decay_P} is analogous.

To finish the proof, it suffices to show for some $D>0$
$${K}_{n,\t_1,\t_2}^{(j)}(u,v)=\mathcal O(e^{-n^{D}}),\quad j=2,3,$$
uniformly for $\tau_1,\t_2$ in compact subsets of $\mathbb R$, and for $u,v\in [-M, M]$ for any $M>0$. This is however fully analogous to the proof of Lemma \ref{lemma: int2}.

\end{proof}

\appendix
\section{Space-time correlation functions}\label{appendix:correlations}

In this appendix, we present some background on the space-time correlation functions  {of} the NIBM process.

 We start by recalling that {by definition,} the transition density of the Markov process $(X(t))_t$ can be obtained as the joint probability density function of the eigenvalues of 
\begin{align}
M\mapsto Z_n^{-1}e^{-\frac{n}{2t}\Tr(M(0)-M)^2},
\end{align}
where $Z_n$ is the normalization constant. It is well-known \cite{BH3,BH2,Johansson01} that employing the Harish-Chandra/Itzykson-Zuber formula, the transition density $\x^{(1)}\mapsto p_t(\x^{(0)};\x^{(1)})$ of $(X(t))_t$ (given $X(0)$) can then be computed as
\begin{align}
p_t(\x^{(0)};\x^{(1)})=\lb\frac{n}{2\pi t}\rb^{\frac n2}\frac{\prod_{i<j} (x^{(1)}_j-x_i^{(1)})}{\prod_{i<j} (x^{(0)}_j-x_i^{(0)})}\det\lb e^{-\frac{n}{2t}(x_i^{(0)}-x_j^{(1)})^2}\rb_{1\leq i,j\leq n},
\end{align}
where $\x^{(i)}=(x^{(i)}_1,\dots,x_n^{(i)})\in W_n:=\{\x\in\R^n:x_1\leq\ldots\leq x_n\},\, i=0,1$ and $\x^{(0)}:=X(0)$. A priori, this density is only defined for distinct initial values $x_j^{(0)}$ but this condition is readily removed by invoking continuity arguments. By the Chapman-Kolmogorov equations, the finite-dimensional distributions of $(X(t))_t$, i.e.~the joint distributions of all finite collections of vectors \(X(t_1),\dots,X(t_k)\in\R^{n}\) for any choice $0=t_0<t_1<\ldots<t_k$, $k\in\mathbb N$, have the densities
\begin{align*}
&W_n^k\ni(\x^{(1)},\dots,\x^{(k)})\mapsto p_{t_1\dots,t_k}(\x^{(0)};\x^{(1)},\dots,\x^{(k)})\\
&:=p_{t_1}(\x^{(0)};\x^{(1)})p_{t_2-t_1}(\x^{(1)};\x^{(2)})\dots p_{t_{k}-t_{k-1}}(\x^{(k-1)};\x^{(k)})\\
&=\lb\prod_{l=1}^k\frac{n}{2\pi (t_l-t_{l-1})}\rb^{\frac {n}2}\frac{\prod_{i<j} (x^{(k)}_j-x_i^{(k)})}{\prod_{i<j} (x^{(0)}_j-x_i^{(0)})}\prod_{l=1}^k\det\lb e^{-\frac{n}{2(t_l-t_{l-1})}(x^{(l-1)}_i-x^{(l)}_j)^2}\rb_{1\leq i,j\leq n}.
\end{align*}

An important property of NIBM is the determinantality of its correlation functions. In order to properly introduce these functions, we consider the symmetrized density $\hat p_{t_1\dots,t_k}(\x^{(0)};\x^{(1)},\dots,\x^{(k)})$ on \(\left(\R^{n}\right)^k\), defined for $\x^{(1)},\dots,\x^{(k)}\in\R^n$ 
\begin{align}
\hat p_{t_1\dots,t_k}(\x^{(0)};\x^{(1)},\dots,\x^{(k)}):=\frac{1}{(n!)^k} p_{t_1\dots,t_k}(\x^{(0)};\x^{(1)}_\leq,\dots,\x^{(k)}_\leq),\label{multi-time-density}
\end{align}
where for $\x^{(i)}\in\R^n$, $\x^{(i)}_\leq$ denotes the unique permuted vector built from $\x^{(i)}$ that lies in $W_n$. 
Now, the space-time correlation functions are for any collection of integer numbers $1\leq m_1,\dots,m_k\leq n$ defined as
\begin{multline}\label{def:multitime}
\rho^{(n)}_{t_1,\dots,t_k}(\x^{(1)}_{m_1},\dots,\x^{(k)}_{m_k}):=\frac{(n!)^k}{\prod_{l=1}^k(n-m_l)!}\\
\times \int\limits_{\R^{kn-\sum_{j=1}^km_k}} \hat p_{t_1,\dots,t_k}(\x^{(1)},\dots,\x^{(k)})dx^{(1)}_{m_1+1}\dots dx^{(1)}_{n}\dots dx^{(k)}_{m_k+1}\dots dx^{(k)}_{n}.
\end{multline}
{
The correlation functions are multiples of the marginal densities of $\hat p_{t_1\dots,t_k}(\x^{(0)};\cdot,\dots,\cdot)$ 
and allow to express the expectation of statistics in terms of integrals. 
 Note that due to the symmetrization, the correlation functions do not directly describe individual paths of the process, i.e.~$(x_1,x_2)\mapsto\rho^{(n)}_{t}(x_1,x_2)$ does not describe the correlations of the two smallest paths but rather the correlations of paths around the point $x_1$ and paths around $x_2$ (at time $t$). However, statistics of special paths, e.g.~those with gaps to one side, admit an effective representation in terms of correlation functions{, as for instance used in the proof of Theorem \ref{corollary_TW}}. Alternatively, one may interpret the correlation functions as joint intensities of the time-dependent point process $\sum_{j=1}^n\d_{X_j(t)}$. }
 
{
The structure of the density \eqref{multi-time-density} being a product of $k+2$ determinants in the variables $\x^{(0)},\dots,\x^{(k)}$ allows for an application of the Eynard-Mehta theorem \cite{EynardMehta,BorodinRains} that yields the determinantality of the correlation functions, i.e.~the ability to express the correlation functions as determinants of matrices given by a certain kernel function. {This leads to the formulas} \eqref{determinantality}--\eqref{def:Kn}.
}

\printbibliography

\end{document}